\definecolor{white}{rgb}{1,1,1}\long\def\symbolfootnote[#1]#2{\begingroup\def\thefootnote{\fnsymbol{footnote}}\footnote[#1]{#2}\endgroup} 
\newcounter{thecounter}
\numberwithin{thecounter}{section}
\newtheorem{lemma}[thecounter]{Lemma}
\newtheorem{proposition}[thecounter]{Proposition}
\newtheorem{theorem}[thecounter]{Theorem}
\newtheorem{corollary}[thecounter]{Corollary}
\newtheorem{defn}[thecounter]{Definition}
\begin{document}

\title{Classification of hyperbolic Dynkin diagrams, root lengths \\ and Weyl group orbits}

\date{\today}

\author{Lisa Carbone}
\author{Sjuvon Chung}
\author{Leigh Cobbs}
\author{Robert McRae}
\author{Debajyoti Nandi}
\author{Yusra Naqvi}
\author{Diego Penta}

\thanks{The first author was supported in part by NSF grant \#DMS-0701176.}
\thanks {2000 Mathematics subject classification. Primary 22E70; secondary 81R10.}
\thanks {Keywords: {\it  hyperbolic Kac-Moody algebra, Weyl group, Dynkin diagram, classification}}

\address{Department of Mathematics, Hill Center, Busch Campus\\
Rutgers, The State University of New Jersey\\
110 Frelinghuysen Rd\\
Piscataway, NJ 08854-8019}
\email{carbonel@math.rutgers.edu}
\email{sjuvon@gmail.com}
\email{cobbs@math.rutgers.edu}
\email{rhmcrae@math.rutgers.edu}
\email{nandi@math.rutgers.edu}
\email{ynaqvi@math.rutgers.edu}
\email{penta@math.rutgers.edu}

\begin{abstract} We give a criterion for a Dynkin diagram, equivalently a generalized Cartan matrix, to be symmetrizable. This criterion is easily checked on the Dynkin diagram. We obtain a simple proof  that the maximal rank of a Dynkin diagram of compact hyperbolic type is 5, while the maximal rank of a symmetrizable Dynkin diagram of compact hyperbolic type is 4. Building on earlier classification results of Kac, Kobayashi-Morita, Li and Sa\c{c}lio\~{g}lu, we present the 238 hyperbolic Dynkin diagrams in ranks 3-10, 142 of which are symmetrizable. For each symmetrizable hyperbolic generalized Cartan matrix, we give a symmetrization and hence the distinct lengths of real roots in the corresponding root system. For each such hyperbolic root system we determine the disjoint orbits of the action of the Weyl group on real roots. It follows that the maximal number of disjoint Weyl group orbits on real roots in  a hyperbolic root system is 4.

\end{abstract}

\maketitle

\section{Introduction} \label{intro}

\medskip
\noindent The theory of hyperbolic Kac-Moody groups and algebras naturally generalizes the theory of affine Kac-Moody groups and algebras which is itself the most natural generalization to infinite dimensions of finite dimensional Lie theory. Recently,  hyperbolic and Lorentzian Kac-Moody groups and algebras have been discovered as symmetries of dimensionally reduced supergravity theories and are known to parametrize the scalar fields of supergravity theories via their coset spaces. They are conjectured to be symmetries of  full supergravity theories ([DHN], [We]) and to encode  geometrical objects of M-theory ([BGH], [DHN]) as well as the dynamics of certain supergravity theories near a space-like singularity ([DHN]).

\medskip
\noindent It is desirable then to have a clear statement of the classification of hyperbolic Kac-Moody  algebras, which may be obtained by classifying their Dynkin diagrams. This is achieved  in analogy with the Cartan-Killing classification of finite dimensional simple Lie algebras which can be described in terms of a classification of their Dynkin diagrams.

\medskip
\noindent If a Dynkin diagram $\mathcal{D}$  is neither of finite or affine type, $\mathcal{D}$ is called {\it indefinite}. The hyperbolic Dynkin diagrams and their Kac-Moody algebras are an important subclass of the indefinite class. We recall that  $\mathcal{D}$ is of {\it hyperbolic type} if it is neither of finite nor affine type, but every proper connected subdiagram is either of finite or of affine type. If $\mathcal{D}$ is of  hyperbolic type,  we say that $\mathcal{D}$ is of {\it compact hyperbolic type} if every proper, connected subdiagram is of finite type.

\medskip
\noindent It is known that the maximal rank of a hyperbolic Kac-Moody algebra is 10. This is determined by the following restrictive conditions:

\medskip
\noindent (1) The fundamental chamber $\mathcal{C}$ of the Weyl group, viewed as a hyperbolic reflection group, must be  a Coxeter polyhedron. The dihedral angles between adjacent walls must be of the form $\pi/k$, where $k\geq 2$.

\medskip
\noindent  (2) The fundamental chamber $\mathcal{C}$ of the Weyl group must be a simplex, which gives a bound on the number of faces.

\medskip
\noindent Such a `Coxeter simplex' $\mathcal{C}$ exists in hyperbolic $n$-space $\mathbb{H}^n$ for $n\leq 9$ (see [VS]). The bound on the rank of a hyperbolic Dynkin diagram can also be deduced by purely combinatorial means ([KM], [K], [Li], [Sa]). Thus the maximal rank of a  hyperbolic Kac-Moody algebra is 10 with a Lorentzian root space of signature $(9,1)$.

\medskip
\noindent   The data for constructing a Kac-Moody algebra includes a {\it generalized Cartan matrix} which is a generalization of the notion of a Cartan matrix for finite dimensional Lie algebras, and which encodes the same information as a Dynkin diagram. Given a generalized Cartan matrix, or its Dynkin diagram, and a finite dimensional vector space $\mathfrak{h}$ satisfying some natural conditions, Gabber and Kac  defined a Kac-Moody algebra by generators and relations in analogy with the Serre presentation for finite dimensional simple Lie algebras ([GK]).

\medskip
\noindent `Symmetrizability' is an important property of the generalized Cartan matrix of a Kac-Moody algebra, necessary for the existence of a well-defined symmetric invariant bilinear form $(\cdot\mid\cdot)$ on the Kac-Moody algebra. This   invariant bilinear form  plays the role of `squared length' of a root. When a generalized Cartan matrix is not symmetrizable, a Kac-Moody algebra may still be constructed, though one must keep track of the discrepancies in the definition of $(\cdot\mid\cdot)$.

\medskip
\noindent One of our motivations in completing this work was to understand the appearance of hyperbolic Dynkin diagrams in cosmological billiards.  In [dBS], de Buyl and Schomblond identify the hyperbolic Kac-Moody algebras for which there exists a Lagrangian of gravity, dilatons and $p$-forms which gives rise to a billiard that can be identified with the fundamental chamber of the Weyl group of the Kac-Moody algebra. In [HJ], Henneaux and Julia compute the billiards that emerge  for all pure supergravities in $D=4$ spacetime dimensions, as well as for $D=4$, $N=4$ supergravities coupled to $k$  Maxwell supermultiplets. They show that the billiard tables for all these models are the Weyl chambers of hyperbolic Kac-Moody algebras. It is striking that in $D=3$ spacetime dimensions, all coherently directed Dynkin diagrams of noncompact hyperbolic type and without closed circuits occur in the analysis of [dBS] and [HJ].

\medskip
\noindent The paper of [dBS] pointed out an error in the paper of Sa\c{c}lio\~{g}lu ([Sa])  who omitted 6 hyperbolic Dynkin diagrams in his account of the classification of symmetric and symmetrizable hyperbolic Dynkin diagrams. De Buyl and Schomblond gave the 6 Dynkin diagrams which they believe were omitted by Sa\c{c}lio\~{g}lu in [dBS], p 4491, though they did not verify symmetrizability. Some searching of the literature revealed other accounts of the classification by Li ([Li]) and Kobayashi and Morita ([KM]). However neither of these papers are as accessible or detailed as the paper of Sa\c{c}lio\~{g}lu. Moreover, there is no complete and accessible account of the classification of hyperbolic Dynkin diagrams in the literature that is free of errors. For example Kobayashi and Morita simply listed the symmetric or symmetrizable hyperbolic Dynkin diagrams in an appendix to their paper [KM] without verification and is not searchable in the literature. An account of Li's classification was also written by Wan in [W] but contains a number of omissions and misprints. There is also a wide variety of notation in use for given hyperbolic diagrams, with no systematic conventions.

\medskip
\noindent Comparing the independent classification results of Kac ([K]), Kobayashi-Morita ([KM]), Li ([Li]) and Sa\c{c}lio\~{g}lu ([Sa]), we note that the total number of  hyperbolic Dynkin diagrams in ranks 3-10 is 238, and is not in question. Moreover 142 of these are claimed to be symmetrizable by Kobayashi-Morita, though a criterion for symmetrizability was not given. Sa\c{c}lio\~{g}lu also did not give a reason why the 136 diagrams he demonstrated are symmetrizable. The results of Kobayashi-Morita appear to confirm the claim of de Buyl and Schomblond that Sa\c{c}lio\~{g}lu omitted 6 Dynkin diagrams, except for a missing verification of symmetrizability in [KM].

\medskip
\noindent While there are 4 independent accounts of different parts of the classification which can be compared and merged, we obtained our own account of the classification from scratch, using the well established methods described by the above authors. We generated the possible hyperbolic Dynkin diagrams independently of previous authors and we compared our results with the existing accounts. To ensure accuracy, this process was undertaken independently by at least 3 of the authors of this paper, and our results were compared. We confirm that the total number of  hyperbolic Dynkin diagrams in ranks 3-10 is 238. Our independent check that each of the 238 diagrams claimed by the above authors can occur has revealed further errata in the literature, which we list in Section 8.

\medskip
\noindent We believe that determining symmetrizability is a crucial component of the classification, and that the lack of criterion for verifying symmetrizability has led to errors in the past.  We obtain a criterion for symmetrizability which we apply to each hyperbolic Dynkin diagram. We can therefore confirm the claim that there are 142 symmetrizable hyperbolic Dynkin diagrams. Applying our symmetrizability criterion assisted us in obtaining a correct statement of the classification.

\medskip
\noindent Our symmetrizability criterion for Dynkin diagrams also leads to a simple proof  that the maximal rank of a Dynkin diagram of compact hyperbolic type is 5, while the maximal rank of a symmetrizable Dynkin diagram of compact hyperbolic type is 4 (Section 3).

\medskip
\noindent We give detailed and complete tables of all hyperbolic Dynkin diagrams: symmetric, symmetrizable and non symmetrizable, and we summarize the existing notation for hyperbolic diagrams (Section 7). For each symmetrizable hyperbolic generalized Cartan matrix, we give a symmetrization and hence the distinct root lengths in the corresponding root system.

\medskip
\noindent The Dynkin diagrams which are indefinite but not hyperbolic are too numerous to classify, however Gritsenko and Nikulin have a program to classify those indefinite Kac-Moody algebras whose root lattices coincide with Lorentzian lattices. The reader is referred to [GN] for a survey of this work which is based on earlier work of Vinberg ([Vi]) on the classification of Lorentzian lattices.

\medskip
\noindent  We also consider the properties of  the root systems of hyperbolic Kac-Moody algebras. It is natural to try to determine the structure of the Weyl group orbits on roots. Let $W$ denote the Weyl group of a Kac-Moody root system $\Delta$. In the finite dimensional case, all roots are Weyl group translates of simple roots. For infinite dimensional Kac-Moody root systems, there are additional mysterious roots of negative norm (`squared length') called imaginary roots. A root $\alpha\in\Delta$ is therefore called {\it real}  if there exists $w\in W$ such that $w(\alpha)$ is a
simple root. A root $\alpha$ which is not real is called {\it imaginary}. 

\medskip
\noindent  The real and imaginary roots of hyperbolic root systems and their images under the Weyl group are known to have a physical interpretation. For example, in cosmological billiards, the walls of the billiard table are related to physical fluxes that, in turn, are related to real roots. In [BGH]  Brown, Ganor and Helfgott show that real roots correspond to fluxes or instantons, and imaginary roots correspond to particles and branes. In [EHTW] Englert,  Houart,  Taormina, and West give a physical interpretation of Weyl group reflections in terms of M-theory.

\medskip
\noindent  In the finite dimensional case, the Weyl group is transitive on roots of the same norm, that is, roots of the same norm all lie in the same Weyl group orbit. The root systems of infinite dimensional algebras, such as hyperbolic algebras, have the mysterious property that roots of the same norm can lie in distinct Weyl group orbits. This was proven in [CCP] where the authors gave a simple criterion for checking if simple roots lie in the same orbit of the Weyl group, and this criterion can be checked easily on the Dynkin diagram. This criterion extends  naturally to all real roots of the Kac-Moody root system. We therefore include a complete tabulation of the Weyl group orbits on real roots of symmetrizable hyperbolic root systems which are included in our tables in Section 7.

\medskip
\noindent  We thank the referees for helpful comments which assisted us in clarifying some aspects of our exposition.


\section{Symmetrizability} \label{symm}

\medskip
\noindent Let $A=(a_{ij})$, $i,j\in\{1,2,\dots ,\ell\}$ be a {\it generalized Cartan matrix}. That is, the entries of $A$ are integers and the following conditions are satisfied ([K]):

\medskip
(1) $a_{ii} = 2$,

(2) $a_{ij} \leq 0$, $i\neq j$,

(3) $a_{ij} = 0$ implies $a_{ji} = 0$.

\begin{defn} A generalized Cartan matrix $A$ is called {\em indecomposable} if $A$ cannot be written as a block diagonal matrix, $A = diag(A_1, A_2)$ up to reordering of the rows and columns, where $A_1$, $A_2$ are generalized Cartan matrices.
\end {defn}

\medskip
\noindent Equivalently, a generalized Cartan matrix $A$ is indecomposable if and only if the corresponding Dynkin diagram $\mathcal{D}$ is connected.

\medskip
\noindent The generalized Cartan matrix $A$ is  {\it symmetrizable} if there exist nonzero rational numbers $d_1,\dots, d_\ell$, such that the matrix $DA$ is symmetric, where $D=diag(d_1,\dots, d_\ell)$.  We call $DA$ a {\it symmetrization} of $A$. A symmetrization  is unique up to a scalar multiple. Kac has given the following criterion for symmetrizability ([K], Exercise 2.1).

\begin{proposition} Let $A=(a_{ij})_{i,j=1,\dots, \ell}$ be a generalized Cartan matrix. Then $A$ is symmetrizable if and only if 
$$a_{i_1i_2}a_{i_2i_3}\dots a_{i_ki_1}\ =\ a_{i_2i_1}a_{i_3i_2}\dots a_{i_1i_k}$$
for each $i_1,\dots, i_k\in\{1,\dots ,\ell\}$, $k\leq \ell$, $i_s\neq i_{s+1}$ for $s$ mod $k$.
\end{proposition}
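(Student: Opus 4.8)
The plan is to reinterpret the symmetrization condition edge-by-edge and then read off both directions from the combinatorics of the associated graph. Observe first that $DA$ is symmetric, with $D=\mathrm{diag}(d_1,\dots,d_\ell)$, precisely when $d_ia_{ij}=d_ja_{ji}$ for all $i,j$. When $a_{ij}=0$, condition (3) forces $a_{ji}=0$, so that equation holds automatically; the only genuine constraints come from the pairs $\{i,j\}$ with $a_{ij}\neq 0$, i.e.\ the edges of the Dynkin diagram $\mathcal{D}$, and on such an edge the constraint reads $d_j/d_i = a_{ij}/a_{ji}$ (note $a_{ji}\neq 0$ as well). So I am really asking whether the prescribed edge-ratios $a_{ij}/a_{ji}$ admit a consistent system of nonzero values $d_i$.

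For necessity, suppose such $d_i$ exist and fix any cyclic sequence $i_1,\dots,i_k,i_1$. For each consecutive pair I have $d_{i_s}a_{i_si_{s+1}}=d_{i_{s+1}}a_{i_{s+1}i_s}$ (this includes the degenerate case where the factor is $0$ on both sides). Multiplying these $k$ identities together and cancelling the common nonzero factor $d_{i_1}\cdots d_{i_k}$ yields exactly $a_{i_1i_2}\cdots a_{i_ki_1}=a_{i_2i_1}\cdots a_{i_1i_k}$. For sufficiency I would first reduce to the case that $\mathcal{D}$ is connected (i.e.\ $A$ indecomposable): entries linking distinct components vanish, so the $d_i$ may be chosen component by component, and cycles never cross components. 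Assuming $\mathcal{D}$ connected, I would fix a spanning tree $T$, set $d_r=1$ at a root $r$, and propagate outward along $T$ by the forced rule $d_j = d_i\,(a_{ij}/a_{ji})$ whenever $\{i,j\}\in T$. Since the $a_{pq}$ along tree edges are nonzero integers, this produces well-defined nonzero rational numbers $d_i$, and every tree edge satisfies the required relation by construction.

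The content—and the main obstacle—is verifying the relation on each non-tree edge, where the cycle hypothesis must enter. I would use the standard telescoping fact that in a tree the product $\prod(a_{pq}/a_{qp})$ taken along the unique tree path joining two vertices equals $d_j/d_i$, because the portions of the root-paths to $i$ and to $j$ below their last common vertex cancel. Given a non-tree edge $\{i,j\}$, the tree path $i=v_0,v_1,\dots,v_m=j$ together with this chord forms a simple cycle, and the hypothesis applied to that cycle, after dividing one product by the other (all factors nonzero), rearranges to $\prod_{t=0}^{m-1}(a_{v_tv_{t+1}}/a_{v_{t+1}v_t}) = a_{ij}/a_{ji}$. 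Combining this with the telescoping identity gives $d_j/d_i = a_{ij}/a_{ji}$, which is exactly the relation for the chord.

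Hence every edge—and trivially every non-edge—satisfies $d_ia_{ij}=d_ja_{ji}$, so $DA$ is symmetric and $A$ is symmetrizable. I note that this argument uses only the fundamental cycles, one per non-tree edge, so the full strength of the hypothesis over all cyclic sequences is more than enough; this also explains why in practice the criterion need only be checked on a spanning set of independent cycles of $\mathcal{D}$.
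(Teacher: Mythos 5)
Your proof is correct, but it is worth noting that the paper never actually proves this proposition: it is quoted as Kac's criterion ([K], Exercise 2.1) and taken as the starting point from which the paper derives its own diagram-level results, namely Theorem 2.4 (a cycle of the Dynkin diagram is balanced if and only if the corresponding Kac relation holds) and Proposition 2.5 (the edge relation $d_i a_{ij} = d_j a_{ji}$). Your argument therefore supplies exactly the content the paper delegates to the literature. The overlap with the paper is substantial in the easy parts: your opening observation that $DA$ is symmetric precisely when $d_i a_{ij} = d_j a_{ji}$ on every edge is the paper's Proposition 2.5, and your necessity direction (multiply the edge relations around a cyclic sequence and cancel $d_{i_1}\cdots d_{i_k}$) is, after dividing rather than multiplying, the same algebra as the paper's proof of Theorem 2.4; note that your version correctly handles cyclic sequences with repeated vertices, which the criterion permits. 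What is genuinely new relative to the paper is the sufficiency direction: fixing a spanning tree of each connected component, propagating $d_j = d_i\,(a_{ij}/a_{ji})$ outward from a root, and then using the telescoping product along tree paths together with the hypothesis applied to each fundamental cycle (tree path plus chord) to verify the relation on non-tree edges. This is the standard solution to Kac's exercise, and it is sound: all quantities involved are nonzero rationals, the fundamental cycles are simple cycles of length at most $\ell$, so the hypothesis applies to them, and the division step is legitimate because every factor in those two products is a nonzero integer. Your closing remark---that only a fundamental system of cycles needs to be checked---is in substance what makes the paper's Theorem 2.4 and its ``compute $D$ by inspection of $\mathcal{D}$'' remark practical, so your proof both fills the gap the paper leaves and explains why the paper's diagrammatic criterion works. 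What your route buys is self-containedness; what the paper's route buys is brevity, at the cost of resting on an unproved citation.
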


\medskip
\noindent {\bf Example.} Let $A=\left(\begin{matrix}
2 & -1 & -1  \\
-2 & 2 & -2 \\
-2 & -1 & 2 \\
\end{matrix}
\right)$. Then $a_{1 2}a_{2 3}a_{3 1}=-4\neq -2 = a_{2 1}a_{3 2}a_{1 3}$.  Hence, $A$ is not symmetrizable.

\medskip
\noindent We will give an equivalent criterion for determining symmetrizability based on the Dynkin diagram $\mathcal{D}$  of a generalized Cartan matrix $A$.  As a corollary we will
also be able to construct the symmetrizing diagonal matrix $D$, such that $DA$ is symmetric.  

\medskip
\noindent We recall the construction of the Dynkin diagram $\mathcal{D}$ from 
the generalized Cartan matrix $A$.  The vertices $V=\{v_1,\cdots, v_{\ell}\}$ correspond to the columns (or rows) of $A$ in that order.  The edge $e_{ij}=(v_i,v_j)$ ($i \neq j$)
between $v_i$ and $v_j$ depends on the entries $a_{ij}$, and $a_{ji}$ of $A$. The edges between $v_i$ and $v_j$ can be characterized as follows:

\medskip
\noindent  (a) No edge: if $a_{ij}=a_{ji}=0$;

\medskip
\noindent  (b) Single edge (symmetric):\hspace{3mm}
\xy
(2,2.5)*{v_i}; (11,2.5)*{v_j };
(2,0)*{\circ}; (11,0)*{\circ} **\dir{-};
\endxy
\ if $a_{ij} = a_{ji} = -1$;

\medskip
\noindent (c) Directed arrow with double, triple or quadruple edges (asymmetric): \xy
(2,2.5)*{v_i}; (11,2.5)*{v_j };
{\ar@2{->} (2,0)*{\circ};(11,0)*{\circ}}; 
\endxy ,
\xy
(2,2.5)*{v_i}; (11,2.5)*{v_j };
{\ar@3{->} (2,0)*{\circ};(11,0)*{\circ}}; 
\endxy,
\xy
(2,3)*{v_i}; (11,3)*{v_j };
(2.5,0)*{\circ}; (10.5,0)*{\circ};
(3.7,-0.25)*{}; (8.8,-0.25)*{} **\dir{-};
(3.7,0.25)*{}; (8.8,0.25)*{} **\dir{-};
(3.5,-0.8)*{}; (7.8,-0.8)*{} **\dir{-};
(3.5,0.8)*{}; (7.8,0.8)*{} **\dir{-};
(9.3,0)*{}; (7.3,1.2)*{} **\dir{-};
(9.3,0)*{}; (7.3,-1.2)*{} **\dir{-};
\endxy 
\ \ if
$a_{ij}=-1$, and $a_{ji}=-2, -3$, or $-4$, respectively;

\medskip
\noindent (d) Double edges with double-headed arrow (symmetric):\hspace{3mm}
\xy
(2,2.5)*{v_i}; (11,2.5)*{v_j };
{\ar@2{<->} (2,0)*{\circ};(11,0)*{\circ}}; 
\endxy
\ if $a_{ij}=a_{ji}=-2$;

\medskip
\noindent (e) Labeled edge (symmetric or asymmetric): 
\xy
(2,2.5)*{v_i}; (15,2.5)*{v_j };
(2,0)*{\circ}; (15,0)*{\circ} **\dir{-};
(6,2)*{a};(11,2)*{b}
\endxy
\ if it is none of the above types, and 
$a_{ij}=-a$, $a_{ji}=-b$, where $a,b \in \mathbb{Z}_{>0}$.

\medskip
\noindent We call an edge of a Dynkin diagram {\it symmetric} if $a_{ij} = a_{ji}$, and {\it asymmetric} otherwise.

\medskip
\noindent We will refer to an oriented edge $(v_i, v_j)$ of the type (c) as an arrow of multiplicity 2, 3, or 4 (respectively). 
The multiplicity of an oriented labeled edge of  type (e) is $b/a$.
 If the multiplicity of $(v_i,v_j)$ is $m$, then the multiplicity of the oppositely oriented edge, $(v_j,v_i)$, is $m^{-1}$.  All symmetric edges have multiplicity $1$.

\medskip
\noindent Note that the multiplicity of the oriented edge $(v_i,v_j)$ is given by $a_{ji}/a_{ij}$.  We denote this by 
$$ \displaystyle mult (v_i, v_j) = \frac{a_{ji}}{a_{ij}}. $$


\begin{defn}  A cycle $(v_{i_1},\cdots,v_{i_k},v_{i_1})$ in $\mathcal{D}$ is called balanced if the product of the multiplicities of the (oriented) edges in the cycle is $1$ when traversed in any particular direction (i.e., clockwise, or counter clockwise).
\end{defn}

\begin{theorem}\label{sym}  Let $\mathcal{D}$ be a Dynkin diagram.  Then $A$ is symmetrizable if and only if each cycle in $\mathcal{D}$ is balanced.
\end{theorem}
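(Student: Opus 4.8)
The plan is to prove both directions by tracking how the edge multiplicities multiply around closed paths, and to make the backward implication constructive by propagating the scaling factors along a spanning tree; this simultaneously yields the promised construction of $D$. Since every cycle lies in a single connected component of $\mathcal{D}$, and since $A$ is symmetrizable if and only if each of its indecomposable blocks is, I would first reduce to the case that $\mathcal{D}$ is connected. The one identity underlying everything is that, for an edge $(v_i,v_j)$, the requirement $d_ia_{ij}=d_ja_{ji}$ that $DA$ be symmetric in the $(i,j)$ and $(j,i)$ positions is equivalent to
$$mult(v_i,v_j)=\frac{a_{ji}}{a_{ij}}=\frac{d_i}{d_j}.$$

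For the forward implication, suppose $A$ is symmetrizable with symmetrization $DA$, where $D=diag(d_1,\dots,d_\ell)$. Then the displayed identity holds for every oriented edge, so for any cycle $(v_{i_1},\dots,v_{i_k},v_{i_1})$ (indices mod $k$) the product of multiplicities telescopes,
$$\prod_{s=1}^{k} mult(v_{i_s},v_{i_{s+1}})=\prod_{s=1}^{k}\frac{d_{i_s}}{d_{i_{s+1}}}=1,$$
and the same holds in the reverse orientation (each factor being replaced by its reciprocal); hence every cycle is balanced.

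For the backward implication, assume every cycle in $\mathcal{D}$ is balanced. I would fix a spanning tree $T$ of the (connected) diagram, set $d_1=1$ at a chosen root vertex, and propagate along the edges of $T$ by the rule $d_j=d_i\,a_{ij}/a_{ji}$ whenever $(v_i,v_j)$ is a tree edge with $v_i$ already assigned. Because $T$ is acyclic these values are determined with no conflict, and they are nonzero rationals since the matrix entries along an edge are negative integers; by construction $DA$ is then symmetric in every position corresponding to a tree edge. It remains only to verify symmetry for the non-tree edges. Each non-tree edge $(v_a,v_b)$ closes a unique cycle together with the tree path $v_a=u_0,u_1,\dots,u_m=v_b$, and unwinding the propagation rule gives $d_b/d_a=\prod_{s=0}^{m-1} a_{u_su_{s+1}}/a_{u_{s+1}u_s}$; a direct comparison, using the reciprocal relation $mult(v_j,v_i)=mult(v_i,v_j)^{-1}$ on the closing edge, shows that this equals $a_{ab}/a_{ba}$—equivalently that $d_aa_{ab}=d_ba_{ba}$—precisely when the fundamental cycle is balanced. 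Since the hypothesis supplies balancedness of all cycles, in particular of these fundamental ones, $DA$ is symmetric on every edge and hence symmetric, so $A$ is symmetrizable.

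The main obstacle I anticipate is bookkeeping rather than conceptual: keeping the orientation conventions straight so that the reciprocal relation is applied correctly when comparing the tree-path product against the single closing edge. I would also remark on the relation to Kac's Proposition above: a sequence $i_1,\dots,i_k$ in which some consecutive pair is non-adjacent forces both sides of Kac's identity to vanish and so imposes no condition, so that only closed walks along edges are relevant; and balancedness on the fundamental cycles of a spanning tree already forces balancedness on every closed walk, since backtracking along an edge contributes a multiplicity together with its reciprocal. This explains why testing the finitely many fundamental cycles suffices, and reconciles the diagrammatic criterion with the algebraic one.
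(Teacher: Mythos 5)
Your proof is correct, but it takes a genuinely different route from the paper's. The paper does not prove the equivalence from scratch: it invokes Kac's criterion ([K], Exercise 2.1, stated as the Proposition preceding the theorem) as a black box and translates it into diagram language --- observing that a product $a_{i_1i_2}a_{i_2i_3}\cdots a_{i_ki_1}$ is nonzero exactly when the vertices $v_{i_1},\dots,v_{i_k}$ form a cycle, and that each nontrivial Kac relation, rewritten as $\prod_s \frac{a_{i_{s+1}i_s}}{a_{i_si_{s+1}}}=1$, is precisely the balancedness condition on that cycle. Your forward direction is essentially the same telescoping computation, but your backward direction replaces the appeal to Kac's criterion with an explicit spanning-tree construction of the symmetrizing matrix $D$. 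This buys three things the paper's proof does not give directly: independence from the unproved Exercise 2.1; the symmetrizer $D$ itself, which the paper only extracts afterwards in Proposition~\ref{drel} and the remark following it; and a sharpening --- balancedness need only be checked on the finitely many fundamental cycles of a spanning tree. What the paper's route buys is brevity, and it automatically handles the closed-walk form of Kac's relations (consecutive indices distinct but repetitions allowed), a point you dispose of correctly in your final remark via the backtracking/reciprocal observation. Two small omissions in your write-up are harmless: symmetry of $DA$ at positions with $a_{ij}=0$ is automatic from the axiom $a_{ij}=0\Rightarrow a_{ji}=0$, and your tree-path telescoping is valid whether a tree edge was propagated toward or away from the root, since the relation $d_ia_{ij}=d_ja_{ji}$ is symmetric in $i$ and $j$.
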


\noindent {\it Proof: } 
We will show that the above criterion based on the Dynkin diagram $D$ is equivalent to Kac's criterion, based on the generalized Cartan matrix $A$ for symmetrizability.

\medskip
\noindent Note that vertices $v_i$ and $v_j$ are connected by an edge in $\mathcal{D}$ if and only if $a_{ij} \neq 0$.
Let $i_1,\cdots,i_k \in \{1,\cdots,\ell\}$ be $k$ distinct indices.  The product
$a_{i_i i_2} a_{i_2 i_3} \cdots a_{i_k i_1}$ is nonzero if and only if the vertices $v_{i_1}, \cdots, v_{i_k}$ form a cycle in $\mathcal{D}$.  If the above product is zero then so is the `reverse' product 
$a_{i_2 i_1} \cdots a_{i_1 i_k}$ (since, we assume $a_{ij}=0 \Leftrightarrow a_{ji}=0$).  So the nontrivial conditions in Kac's criterion correspond to conditions on the cycles of $\mathcal{D}$.

\medskip
\noindent A nontrivial relation in Kac's criterion can be rewritten as
$$\displaystyle
\frac{a_{i_2 i_1}}{a_{i_1 i_2}} \cdot
\frac{a_{i_3 i_2}}{a_{i_2 i_3}} \cdot
\cdots \cdot
\frac{a_{i_1 i_k}}{a_{i_k i_1}} = 1.$$
The product in the left hand side of the above equation is the product of the multiplicities of the edges in the cycle $(v_{i_1}, \cdots, v_{i_k}, v_{i_1})$ in $\mathcal{D}$.
Hence, the result follows.
$\square$

\begin{proposition}\label{drel} Let $A$ be a symmetrizable generalized Cartan matrix, and $\mathcal{D}$ be the corresponding Dynkin diagram.  Let $D = diag (d_1, \cdots d_{\ell})$ be the diagonal matrix that symmetrizes $A$.  Then we have 
$$ \displaystyle
d_i = d_j \mathbf{\cdot} \, mult(v_i,v_j) = 
d_j \mathbf{\cdot} \, \frac{a_{ji}}{a_{ij}},
$$
whenever $v_i$ and $v_j$ are connected by an edge. Therefore, 
$mult(v_i,v_j) = a_{ji}/a_{ij} = d_i/d_j$.
\end{proposition}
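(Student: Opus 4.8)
The plan is to unwind the definition of symmetrization entry by entry. Since $DA$ is by hypothesis a symmetric matrix and $D = \mathrm{diag}(d_1, \ldots, d_\ell)$ is diagonal, the $(i,j)$ entry of $DA$ is simply $d_i a_{ij}$. First I would write down the symmetry condition $(DA)_{ij} = (DA)_{ji}$ in the form
$$d_i a_{ij} = d_j a_{ji},$$
holding for every pair of indices $i, j$.

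Next I would restrict attention to a pair $v_i, v_j$ joined by an edge. As recalled in the proof of Theorem \ref{sym}, this happens precisely when $a_{ij} \neq 0$ (equivalently $a_{ji} \neq 0$). Hence I may divide the relation $d_i a_{ij} = d_j a_{ji}$ by $a_{ij}$ to obtain
$$d_i = d_j \cdot \frac{a_{ji}}{a_{ij}}.$$
By the definition $mult(v_i, v_j) = a_{ji}/a_{ij}$ given above, this is exactly the asserted identity $d_i = d_j \cdot mult(v_i, v_j)$. Finally, since the $d_k$ are nonzero by the definition of symmetrizability, I may also divide by $d_j$ to record the ratio form $mult(v_i, v_j) = a_{ji}/a_{ij} = d_i/d_j$.

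There is essentially no serious obstacle here: the statement is a direct transcription of the entrywise symmetry of $DA$ together with the definition of multiplicity. The only point requiring care is the hypothesis that $v_i$ and $v_j$ are connected by an edge, which is what guarantees $a_{ij} \neq 0$ and thereby legitimizes the division; without an edge the relevant entries vanish and the symmetry condition yields no constraint on the ratio $d_i/d_j$.
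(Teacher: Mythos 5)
Your proof is correct and follows essentially the same route as the paper: both derive the entrywise relation $d_i a_{ij} = d_j a_{ji}$ from the symmetry of $DA$ (the paper does this by restricting to the $2\times 2$ submatrix $D_{ij}A_{ij}$, a purely presentational difference) and then use the fact that an edge between $v_i$ and $v_j$ forces $a_{ij}\neq 0$ to justify dividing. If anything, your write-up is slightly more complete, since you make the division step and the nonvanishing of the $d_k$ explicit, which the paper leaves tacit.
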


\noindent
{\it Proof: }  For $i<j$, let $A_{ij}$ be the following submatrix of $A$ 
\[
\begin{pmatrix}
   a_{ii} & a_{ij} \\
   a_{ji} & a_{jj}
\end{pmatrix}
\]
Similarly, define $D_{ij} = diag (d_i, d_j)$.  If $DA$ is symmetric, then clearly $D_{ij}A_{ij}$ is symmetric.

\medskip
\noindent If the vertices $v_i$ and $v_j$ of $\mathcal{D}$ are connected by an edge, then the $a_{ij}$ and $a_{ji}$ are nonzero. Since $D_{ij}A_{ij}$ is symmetric we have
\[
\begin{pmatrix}
   d_i & 0 \\
   0 & d_j
\end{pmatrix}
\begin{pmatrix}
   a_{ii} & a_{ij} \\
   a_{ji} & a_{jj}
\end{pmatrix}
=
\begin{pmatrix}
  d_i a_{ii} & d_i a_{ij}\\
  d_j a_{ji} & d_j a_{jj}
\end{pmatrix}
\]
we have $d_i a_{ij} = d_j a_{ji}$.  
$\square$

\medskip\noindent{\bf Remark.} By Proposition \ref{drel}, given a Dynkin diagram $\mathcal{D}$ of a generalized Cartan matrix $A$, the symmetrizablity can be readily inferred, and the symmetrizing matrix $D$ (if $A$ is symmetrizable) can be readily computed by inspection of $\mathcal{D}$.

\begin{corollary}\label{acyclic} Any acyclic Dynkin diagram is symmetrizable.
\end{corollary}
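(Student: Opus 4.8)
The plan is to derive this directly from Theorem \ref{sym}. An \emph{acyclic} Dynkin diagram is, by definition, one containing no cycle $(v_{i_1},\dots,v_{i_k},v_{i_1})$. Theorem \ref{sym} states that $A$ is symmetrizable if and only if every cycle in $\mathcal{D}$ is balanced. When $\mathcal{D}$ is acyclic there are simply no cycles to test, so the condition ``each cycle is balanced'' holds vacuously. Hence $A$ is symmetrizable, and this is essentially the entire argument.

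For a more self-contained and constructive version, I would instead exhibit the symmetrizing matrix $D$ explicitly, using Proposition \ref{drel}. Treating each connected component separately, I would pick a root vertex $v_1$ and set $d_1=1$. Since $\mathcal{D}$ is a tree, every other vertex $v_j$ is joined to $v_1$ by a unique path; traversing that path and repeatedly applying the relation $d_i = d_j\cdot a_{ji}/a_{ij}$ from Proposition \ref{drel} defines each $d_j$ unambiguously. One then checks that these values symmetrize $A$, i.e.\ that $d_i a_{ij}=d_j a_{ji}$ holds for every edge $e_{ij}$: for edges lying on the chosen paths this is true by construction, and since acyclicity means \emph{every} edge is a tree edge (any additional edge would close a cycle), the relation holds for all edges and $DA$ is symmetric.

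The main (and really the only) obstacle is conceptual rather than computational: one must recognize that the potential inconsistency in propagating the $d_i$ along edges is governed entirely by cycles. Two distinct paths from $v_1$ to some $v_j$ would impose two possibly conflicting values of $d_j$, and their agreement around the resulting loop is precisely the balance condition. In a tree there is exactly one such path, so the conflict never arises. Thus acyclicity removes the obstruction completely, whether one argues vacuously through Theorem \ref{sym} or through the tree structure in the constructive approach.
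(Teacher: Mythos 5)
Your first argument is exactly the paper's proof: the corollary is stated as an immediate consequence of Theorem \ref{sym}, since an acyclic diagram has no cycles and the balance condition holds vacuously. That settles the statement, and your reasoning there is complete and correct.

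Your constructive addendum is also sound and goes a bit beyond what the paper records for this corollary: by rooting each tree component, setting $d_{\mathrm{root}}=1$, and propagating $d_i = d_j \cdot a_{ji}/a_{ij}$ along the unique paths, you produce the symmetrizing matrix $D$ explicitly (each $d_i$ is a nonzero rational since every $a_{ij}$ along an edge is nonzero, and $d_i a_{ij} = d_j a_{ji}$ holds for every edge because every edge of a forest is a tree edge, while non-edges contribute $0=0$). This is essentially the content of the Remark following Proposition \ref{drel} --- that $D$ can be read off the diagram by inspection --- and your observation that the only possible obstruction to this propagation is a cycle imposing two conflicting values of some $d_j$ is precisely the conceptual link between the constructive argument and the balance criterion of Theorem \ref{sym}. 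So the two routes you give are the paper's argument and a self-contained refinement of it, respectively, and both are correct.
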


\begin{corollary} Let $A$ be a $2\times 2$ generalized Cartan matrix. Then $A$ is symmetrizable.$\square$
\end{corollary}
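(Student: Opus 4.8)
The plan is to obtain this as an immediate consequence of Theorem~\ref{sym} (or equivalently Corollary~\ref{acyclic}), after observing that a rank-two diagram simply has no cycles to balance. First I would write the general $2\times 2$ generalized Cartan matrix as $A = \left(\begin{smallmatrix} 2 & a_{12} \\ a_{21} & 2 \end{smallmatrix}\right)$ with $a_{12},a_{21}\le 0$ and $a_{12}=0 \Leftrightarrow a_{21}=0$, and note that its Dynkin diagram $\mathcal{D}$ lives on the two vertices $v_1,v_2$ joined by at most a single edge, as dictated by the edge conventions (a)--(e). A genuine cycle $(v_{i_1},\dots,v_{i_k},v_{i_1})$ with $i_s\neq i_{s+1}$ requires at least three distinct vertices, so $\mathcal{D}$ is acyclic. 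Corollary~\ref{acyclic} then yields symmetrizability at once.

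If one prefers to display an explicit symmetrization rather than appeal to the acyclic case, I would instead invoke Proposition~\ref{drel} and construct $D=diag(d_1,d_2)$ by hand. In the edgeless case $a_{12}=a_{21}=0$ the matrix $A$ is already symmetric and $D=I$ works. Otherwise both off-diagonal entries are nonzero, and choosing $d_1=-a_{21}$, $d_2=-a_{12}$ (indeed any nonzero rationals with $d_1/d_2 = a_{21}/a_{12}$ will do) gives $d_1a_{12}=a_{21}a_{12}=d_2a_{21}$, which is exactly the single condition needed for the lone pair of off-diagonal entries of $DA$ to agree; hence $DA$ is symmetric.

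I expect essentially no obstacle here, since the statement is a degenerate instance of the preceding theory. The only point worth a sentence of care is the length-two closed walk $(v_1,v_2,v_1)$, which one might worry counts as a cycle: even if it did, it would impose no constraint, since the product of multiplicities around it is $mult(v_1,v_2)\cdot mult(v_2,v_1)=m\cdot m^{-1}=1$, by the reciprocity of oppositely oriented multiplicities $mult(v_j,v_i)=mult(v_i,v_j)^{-1}$ recorded in the text preceding the balanced-cycle definition. Thus no nontrivial balancing condition can arise in rank two, and the corollary follows.
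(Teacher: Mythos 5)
Your proposal is correct and follows the paper's own argument exactly: the two-vertex Dynkin diagram is acyclic, so Corollary~\ref{acyclic} applies. The explicit symmetrization via Proposition~\ref{drel} and the remark about the length-two closed walk being automatically balanced are sound additional touches, but the core reasoning is the same as in the paper.
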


\noindent
{\it Proof: }  The corresponding Dynkin diagram $\mathcal{D}$ has only two vertices.  So  $\mathcal{D}$ has no cycle.  Hence, by Corollary \ref{acyclic}, $A$ is symmetrizable. $\square$

\medskip\noindent{\bf Remark.} All finite and affine Dynkin diagrams are symmetric or symmetrizable. 

\medskip\noindent{\bf Remark.} It can be easily deduced that if $\mathcal{D}$ is a hyperbolic Dynkin diagram and if  $7\leq rank(\mathcal{D})\leq 10$ then $\mathcal{D}$ is symmetric or symmetrizable.

\begin{proposition} A Dynkin diagram is symmetric if and only if it contains only symmetric edges, that is, single edges, or double edges with  bi-directional arrows.\end{proposition}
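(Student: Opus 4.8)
The plan is to reduce the whole statement to the defining relation $A=A^{T}$ together with the definition of a symmetric edge recorded just above the proposition, namely that $e_{ij}$ is symmetric exactly when $a_{ij}=a_{ji}$. First I would set down the two facts already available for free: by axiom (3) of a generalized Cartan matrix, an edge $e_{ij}$ is present in $\mathcal{D}$ if and only if $a_{ij}\neq 0$ (equivalently $a_{ji}\neq 0$), and the diagonal is fixed by axiom (1) as $a_{ii}=2$. Hence $A$ being symmetric, i.e. $a_{ij}=a_{ji}$ for all $i,j$, is equivalent to the off-diagonal condition $a_{ij}=a_{ji}$ for all $i\neq j$, since the diagonal agreement is automatic.

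With this reformulation the biconditional is almost immediate, and I would handle the two directions separately. For the forward implication, if $A=A^{T}$ then $a_{ij}=a_{ji}$ for every present edge, so every edge of $\mathcal{D}$ is symmetric by definition. For the converse I would argue pairwise: fix $i\neq j$; if there is no edge then $a_{ij}=a_{ji}=0$, while if there is an edge then its symmetry gives $a_{ij}=a_{ji}$ directly. In either case the off-diagonal entries coincide, and combined with $a_{ii}=a_{ii}$ this yields $A=A^{T}$. This establishes the equivalence ``$\mathcal{D}$ symmetric $\Leftrightarrow$ all edges of $\mathcal{D}$ symmetric.''

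It then remains to identify \emph{which} edge types are the symmetric ones, i.e. to justify the clause ``single edges, or double edges with bi-directional arrows.'' Running through conventions (a)--(e), a symmetric edge has $a_{ij}=a_{ji}=-c$ for some $c\in\mathbb{Z}_{>0}$, where (b) records $c=1$ as a single edge and (d) records $c=2$ as a double bi-directional edge. The one genuinely non-routine point — and the only real obstacle — is excluding symmetric labeled edges of type (e) with $c\geq 3$. Here I would invoke the setting of the paper: any connected rank-$2$ subdiagram of a finite, affine, or hyperbolic diagram of rank at least $3$ is a proper connected subdiagram, hence of finite or affine type, which forces $a_{ij}a_{ji}=c^{2}\leq 4$ and therefore $c\leq 2$. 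Consequently no symmetric type-(e) edge can occur among the diagrams under consideration, so the symmetric edges are precisely single edges and double bi-directional edges, completing the identification and the proof.
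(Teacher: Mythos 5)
Your first two paragraphs reproduce, with more bookkeeping, exactly the paper's proof: the paper's entire argument is the one-line observation that $A$ is symmetric if and only if $a_{ij}=a_{ji}$ for all $i,j$, which by definition means every edge $(v_i,v_j)$ is symmetric. The genuine difference is your third paragraph. The paper treats the clause ``that is, single edges, or double edges with bi-directional arrows'' as a harmless gloss and never addresses it, whereas you correctly notice that the definition of a symmetric edge ($a_{ij}=a_{ji}$) also allows symmetric labeled edges of type (e), such as $a_{ij}=a_{ji}=-3$, so identifying the symmetric edges with types (b) and (d) genuinely requires an argument. Your argument --- any connected two-vertex subdiagram of a finite, affine, or hyperbolic diagram of rank at least $3$ is a proper connected subdiagram of finite or affine type, forcing $a_{ij}a_{ji}\le 4$ and hence $a_{ij}=a_{ji}\in\{-1,-2\}$ on symmetric edges --- is sound, and it is the only point in the proposition where anything nontrivial needs to be checked. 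It also buys something the paper's proof does not: it makes visible that the parenthetical clause requires this rank restriction, since for the rank-two hyperbolic generalized Cartan matrix $\left(\begin{smallmatrix}2 & -3\\ -3 & 2\end{smallmatrix}\right)$ (which the paper lists among the hyperbolic matrices with $ab>4$) the matrix is symmetric while its diagram consists of a symmetric labeled edge, not a single edge or a double edge with bi-directional arrows. So: same core route as the paper for the stated equivalence, plus a careful justification of the edge-type identification that the paper silently omits.
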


\medskip
\noindent {\it Proof:} Let $A$ be the corresponding generalized Cartan matrix.  Then $A$ is symmetric if and only if $a_{ij} = a_{ji}$ for all $1 \leq i,j \leq \ell$, that is, when all edges $(v_i, v_j)$ are symmetric.
$\square$


\section{Classification} \label{class}

\medskip
\noindent The rank 2 hyperbolic generalized Cartan matrices, infinite in number are:
$$
A=\left( \begin{matrix}
2 & -a\\
-b & 2\\
\end{matrix}
\right) _{ab > 4}.
$$

\medskip
\noindent We recall that the following are the only 2$\times 2$ affine generalized Cartan matrices:
$$A_1^{(1)}=\left( \begin{matrix}
2 & -2\\
-2 & 2
\end{matrix}
\right),\ A_2^{(2)}=\left( \begin{matrix}
2 & -1\\
-4 & 2
\end{matrix}
\right).$$

\begin{proposition} A symmetrizable hyperbolic generalized Cartan matrix contains an $A_1^{(1)}$ or $A_2^{(2)}$ proper indecomposable  submatrix if and only if rank $A=3$ and $A$ has  non-compact type.
\end{proposition}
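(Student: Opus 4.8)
The plan is to translate the matrix condition into the language of subdiagrams. A proper indecomposable submatrix equal to $A_1^{(1)}$ or $A_2^{(2)}$ is exactly a proper connected rank-$2$ affine subdiagram of $\mathcal{D}$, because these two are the only rank-$2$ affine generalized Cartan matrices: a rank-$2$ affine matrix has off-diagonal entries $(-a,-b)$ with $ab=4$, and the solutions $(a,b)\in\{(2,2),(1,4),(4,1)\}$ give precisely $A_1^{(1)}$ (for $(2,2)$) and $A_2^{(2)}$ (for $(1,4)$ and its transpose). With this dictionary the proposition becomes: a hyperbolic $\mathcal{D}$ has a proper connected rank-$2$ affine subdiagram if and only if $\mathcal{D}$ has rank $3$ and is of non-compact type.

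For the forward direction, suppose $\mathcal{D}$ contains such a subdiagram on vertices $\{i,j\}$; properness forces $\ell\geq 3$. That $\mathcal{D}$ is of non-compact type is immediate, since $\{i,j\}$ is a proper connected subdiagram that is not of finite type, contradicting the definition of compact hyperbolic type. The substantive claim is that $\ell=3$. I would argue by contradiction: assume $\ell\geq 4$. Since $\mathcal{D}$ is connected, I can pick a vertex $k\notin\{i,j\}$ adjacent to $i$ or $j$, so that $\{i,j,k\}$ is connected, and it is a proper subdiagram because $\ell\geq 4$. By hyperbolicity of $\mathcal{D}$, the subdiagram $\{i,j,k\}$ is of finite or affine type. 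It is not of finite type, since it contains the non-finite (affine) subdiagram $\{i,j\}$. Hence $\{i,j,k\}$ is a rank-$3$ affine diagram---but every proper connected subdiagram of an affine diagram is of finite type, so $\{i,j\}$ would then be finite, a contradiction. Therefore $\ell=3$.

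For the converse, assume $\mathcal{D}$ has rank $3$ and is of non-compact type. Non-compactness yields a proper connected subdiagram that is not of finite type, hence of affine type by hyperbolicity. Its rank is at most $\ell-1=2$, and it cannot be $1$ (the unique rank-$1$ diagram is $A_1$, which is finite), so it is a proper rank-$2$ affine subdiagram; by the dictionary above it is an $A_1^{(1)}$ or $A_2^{(2)}$ submatrix, completing the equivalence.

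The one nontrivial ingredient, and the step I expect to be the crux, is the fact that a rank-$3$ affine diagram contains no rank-$2$ affine subdiagram; I would supply this by invoking the standard property that every proper connected subdiagram of an affine Dynkin diagram is of finite type. I would also remark that the argument nowhere uses symmetrizability of $A$: the hypothesis appears to be stronger than the proof requires, and is presumably stated only because the surrounding classification is carried out in the symmetrizable setting.
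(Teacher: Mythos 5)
Your proposal is correct and follows essentially the same route as the paper: the forward direction extends the rank-$2$ affine subdiagram by an adjacent vertex to produce a proper connected $3$-vertex subdiagram that is neither finite nor affine (contradicting hyperbolicity when $\ell\geq 4$), and the converse extracts a proper connected affine subdiagram from non-compactness and observes it must have exactly $2$ vertices. Your write-up merely makes explicit two points the paper leaves implicit --- that proper connected subdiagrams of affine diagrams are finite, and that symmetrizability is never used --- both of which are accurate.
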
 
\noindent {\it{Proof:}} A symmetrizable hyperbolic generalized Cartan matrix cannot contain an $A_1^{(1)}$ or $A_2^{(2)}$ indecomposable proper submatrix if rank $A=\ell>3$, since the Dynkin diagram corresponding to $A$ has $\ell$ vertices, and the 3 vertex connected subdiagram consisting of  $A_1^{(1)}$ or $A_2^{(2)}$  plus an additional vertex would then be neither affine nor finite. Thus if $A$ is a symmetrizable hyperbolic generalized Cartan matrix with an $A_1^{(1)}$ or $A_2^{(2)}$ indecomposable submatrix, then the rank of $A$ is  3, and $A$ has  non-compact type .

\medskip
\noindent Conversely, every hyperbolic diagram of rank 3 of non-compact type must contain an $A_1^{(1)}$ or $A_2^{(2)}$ indecomposable subdiagram, since it must contain a subdiagram of affine type with 2 vertices. $\square$

\begin{corollary} Let $A$ be a rank 3 symmetrizable hyperbolic generalized Cartan matrix of non-compact type. The following  conditions are equivalent.
\begin{enumerate}
\item[(a)] The Dynkin diagram for $A$ has an $A_1^{(1)}$ or $A_2^{(2)}$ proper connected subdiagram.
\item[(b)]  $A$ has a proper indecomposable affine submatrix $B=(b_{ij})$ such that for some $i,j\in\{1,2,3\}$, $i\neq j$, $b_{ij}b_{ji}= 4$.

\end{enumerate}
\end{corollary}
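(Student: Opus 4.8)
The plan is to deduce the equivalence from the elementary classification of rank-$2$ affine generalized Cartan matrices recalled at the start of this section, so that symmetrizability, hyperbolicity and non-compact type enter only through the preceding Proposition. First I would record the basic reduction: since $A$ has rank $3$, any \emph{proper} submatrix is supported on a subset of $\{1,2,3\}$ of size $1$ or $2$. A one-vertex submatrix is $(2)$, which is of finite type $A_1$, hence never affine; therefore every proper indecomposable (that is, connected) \emph{affine} submatrix of $A$ is necessarily $2\times 2$. The only $2\times 2$ affine generalized Cartan matrices are $A_1^{(1)}$ and $A_2^{(2)}$, as listed above, since a $2\times 2$ matrix $\left(\begin{smallmatrix}2 & -a\\ -b & 2\end{smallmatrix}\right)$ is of affine type precisely when $ab=4$.

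For the implication $(a)\Rightarrow(b)$, I would take the hypothesized $A_1^{(1)}$ or $A_2^{(2)}$ proper connected subdiagram and read off its $2\times 2$ submatrix $B$, which is indecomposable and affine. A direct computation then gives $b_{ij}b_{ji}=(-2)(-2)=4$ in the $A_1^{(1)}$ case and $b_{ij}b_{ji}=(-1)(-4)=4$ in the $A_2^{(2)}$ case, exhibiting the required index pair and establishing $(b)$.

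For the converse $(b)\Rightarrow(a)$, I would start from a proper indecomposable affine submatrix $B$ and invoke the reduction above to conclude that $B$ is $2\times 2$, hence equal to $A_1^{(1)}$ or $A_2^{(2)}$; the corresponding subdiagram of $\mathcal{D}$ is then of exactly the required type, giving $(a)$. Note that here the explicit condition $b_{ij}b_{ji}=4$ is automatic once $B$ is known to be $2\times 2$ and affine, by the affineness criterion $ab=4$ above.

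I do not expect a genuine obstacle: the entire content is carried by the short list of rank-$2$ affine matrices, and the single point requiring care is the opening reduction—verifying that "proper indecomposable affine submatrix" forces the $2\times 2$ size in rank $3$. It is worth remarking that the standing hypotheses of symmetrizability, hyperbolicity, and non-compact type are not used in the equivalence itself; rather, by the preceding Proposition they guarantee that such an affine submatrix exists, so that both $(a)$ and $(b)$ are in fact satisfied and the equivalence is non-vacuous.
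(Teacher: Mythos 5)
Your proof is correct and follows essentially the route the paper intends: the paper states this corollary without a separate proof, treating it as immediate from the preceding Proposition together with the classification of $2\times 2$ affine generalized Cartan matrices ($A_1^{(1)}$ and $A_2^{(2)}$, i.e.\ the condition $b_{ij}b_{ji}=4$), which is exactly the reduction you carry out. Your closing remark—that the standing hypotheses of symmetrizability, hyperbolicity and non-compact type are not needed for the equivalence itself but, via the Proposition, guarantee that both (a) and (b) actually hold—is also an accurate reading of how the corollary sits relative to the Proposition.
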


\begin{corollary} Let $A_0$ be a rank 2 proper submatrix of a symmetrizable indecomposable hyperbolic generalized Cartan matrix $A$ of rank $\geq 4$. Then $A_0$ is of finite type.
\end{corollary}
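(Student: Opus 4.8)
The plan is to reduce everything to the characterization of affine rank $2$ submatrices already established in this section. First I would recall that $A_0$, being a rank $2$ principal submatrix of $A$, is determined by a pair of distinct indices $i,j$ and has the form $\left(\begin{smallmatrix} 2 & -a \\ -b & 2\end{smallmatrix}\right)$ with $a,b\in\Z_{\geq 0}$ and $a=0\Leftrightarrow b=0$. Such a matrix is of finite type precisely when $ab<4$ and of affine type precisely when $ab=4$, and the only indecomposable rank $2$ affine generalized Cartan matrices are $A_1^{(1)}$ and $A_2^{(2)}$, as recorded at the start of this section. Since $\operatorname{rank} A\geq 4>2$, the submatrix $A_0$ is proper.

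Next I would dispose of the decomposable case: if $a=b=0$ then $A_0=diag(2,2)$, which is $A_1\oplus A_1$ of finite type, and there is nothing to prove. So I may assume $ab>0$, i.e.\ that $v_i$ and $v_j$ are joined by an edge, so that $A_0$ is indecomposable and the two-vertex subdiagram on $\{v_i,v_j\}$ is connected. Because $A$ is of hyperbolic type, every proper connected subdiagram of its Dynkin diagram is of finite or of affine type; applying this to $\{v_i,v_j\}$ gives $ab\leq 4$, so $A_0$ is either of finite type or equal to one of $A_1^{(1)}$, $A_2^{(2)}$.

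It then remains only to exclude the affine possibility $ab=4$, and here I would invoke the Proposition proved above, which asserts that a symmetrizable hyperbolic generalized Cartan matrix contains an $A_1^{(1)}$ or $A_2^{(2)}$ proper indecomposable submatrix if and only if $\operatorname{rank} A=3$. Since by hypothesis $A$ is symmetrizable, hyperbolic, and of rank $\geq 4$, it can contain no such submatrix; in particular $A_0$ can be neither $A_1^{(1)}$ nor $A_2^{(2)}$. Hence $ab<4$ and $A_0$ is of finite type.

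There is essentially no obstacle here beyond bookkeeping, since the content is carried entirely by the previously proved Proposition; its proof already explains the mechanism, namely that adjoining to the affine pair a further vertex adjacent to it (available because $A$ is indecomposable with $\operatorname{rank} A\geq 4$) produces a connected three-vertex proper subdiagram that is neither finite nor affine, contradicting hyperbolicity. The only points requiring care are the separate, trivial treatment of the disconnected case $a=b=0$ and the observation that the only indecomposable rank $2$ affine generalized Cartan matrices are $A_1^{(1)}$ and $A_2^{(2)}$, so that excluding these two already suffices.
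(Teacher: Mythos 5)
Your proof is correct and follows essentially the same route as the paper's: both use the definition of hyperbolic type to rule out a rank-$2$ hyperbolic submatrix (i.e.\ $ab>4$) and invoke the earlier Proposition to rule out the affine possibilities $A_1^{(1)}$ and $A_2^{(2)}$, leaving only finite type. Your explicit treatment of the disconnected case $a=b=0$ and of the rank-$2$ finite/affine dichotomy $ab<4$ versus $ab=4$ is just a slightly more careful bookkeeping of what the paper compresses into the statement that the off-diagonal entries are $0,-1,-2,-3$ with $\det(A_0)>0$.
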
 
\noindent {\it{Proof:}} Since the rank of $A$ is $\geq 4$, by the proposition, $A$ cannot contain a $A_1^{(1)}$ or $A_2^{(2)}$ indecomposable proper submatrix. Let $A_0$ be any proper rank 2 submatrix. Then $A_0$ itself cannot be affine or hyperbolic. The diagonal entries of $A_0$ must equal 2 since $A_0$ is proper. The remaining entries must necessarily be 0, -1, -2 or -3, with det$(A_0)>0$. That is, $A_0$ is of finite type. $\square$

\begin{corollary} Let $\mathcal{D}$ be a hyperbolic Dynkin diagram with $n$ vertices. Then any proper connected affine subdiagram has $n-1$ vertices.
\end{corollary}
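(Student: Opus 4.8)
The plan is to argue by contradiction, combining the defining property of a hyperbolic diagram with the well-known minimality of affine diagrams. Let $\mathcal{D}'$ be a proper connected affine subdiagram of $\mathcal{D}$, and let $m$ denote its number of vertices. Since $\mathcal{D}'$ is proper we have $m \le n-1$, so it suffices to rule out $m \le n-2$ and thereby conclude that $m = n-1$.

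First I would exploit the connectivity of $\mathcal{D}$. If $m \le n-2$, then $\mathcal{D}'$ omits at least two vertices of $\mathcal{D}$; because $\mathcal{D}$ is connected, at least one vertex $v$ lying outside $\mathcal{D}'$ must be adjacent to some vertex of $\mathcal{D}'$. Adjoining $v$ produces a connected subdiagram $\mathcal{D}''$ on $m+1 \le n-1$ vertices. In particular $\mathcal{D}''$ is still a proper connected subdiagram of $\mathcal{D}$, and it contains the affine diagram $\mathcal{D}'$ as a proper connected subdiagram.

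The key step is to identify the type of $\mathcal{D}''$. Here I would invoke the standard fact (Kac, \emph{Infinite Dimensional Lie Algebras}, via the trichotomy of indecomposable generalized Cartan matrices into finite, affine, and indefinite type) that every proper connected subdiagram of a diagram of finite or affine type is itself of finite type; equivalently, a connected diagram that properly contains a connected affine diagram must be of indefinite type. Since $\mathcal{D}''$ properly contains the affine diagram $\mathcal{D}'$, were $\mathcal{D}''$ of finite or affine type then its proper connected subdiagram $\mathcal{D}'$ would have to be of finite type, contradicting that $\mathcal{D}'$ is affine. Hence $\mathcal{D}''$ is necessarily of indefinite type.

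Finally I would derive the contradiction from the definition of hyperbolicity: $\mathcal{D}''$ is a proper connected subdiagram of $\mathcal{D}$, so it must be of finite or affine type, clashing with the conclusion that it is indefinite. Therefore $m \le n-2$ is impossible, and $m = n-1$. The main obstacle is the middle step, namely pinning down that $\mathcal{D}''$ is indefinite; this rests entirely on the minimality property of affine diagrams (deleting a vertex from an affine diagram always yields a finite-type diagram, and dually affine diagrams never sit as proper connected subdiagrams of finite or affine diagrams). Everything else is bookkeeping with connectivity and the definition of hyperbolic type.
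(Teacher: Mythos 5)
Your proposal is correct and follows essentially the same route as the paper: assume the affine subdiagram has at most $n-2$ vertices, adjoin an adjacent vertex to obtain a proper connected subdiagram properly containing an affine diagram, note that such a diagram can be neither finite nor affine, and contradict the definition of hyperbolic type. The paper's version is just a terser rendering of this argument (it leaves the adjacency choice and the indefiniteness of the enlarged diagram implicit), so no further comparison is needed.
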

\noindent {\it{Proof:}} Suppose conversely that $\mathcal{D}$ has a proper connected affine subdiagram $\mathcal{D}_0$ with  $n-s$ vertices, $1<s<n$. Then $\mathcal{D}_0$ plus an additional vertex would be a proper connected subdiagram of $\mathcal{D}$  of hyperbolic type, which is a contradiction. $\square$

\begin{lemma} Let $\mathcal{D}$ be a hyperbolic Dynkin diagram of rank
$n$. Then, there exists
a connected subdiagram with $n-1$ vertices.
\end{lemma}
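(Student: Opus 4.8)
The plan is to forget the edge labels, multiplicities, and arrow orientations entirely and treat $\mathcal{D}$ as an ordinary finite graph, since whether a subdiagram is connected depends only on which pairs of vertices are joined by an edge, not on the decorations. A subdiagram with $n-1$ vertices is precisely what one obtains by deleting a single vertex $v$ (together with its incident edges) from $\mathcal{D}$. Thus the assertion is equivalent to the claim that $\mathcal{D}$ possesses a \emph{non-cut vertex}, i.e.\ a vertex $v$ for which the induced subdiagram $\mathcal{D}\setminus\{v\}$ remains connected.

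First I would record that $\mathcal{D}$ itself is connected: a hyperbolic Dynkin diagram is indecomposable, so its underlying graph is connected, and it has $n\geq 2$ vertices. With this in hand the statement reduces to a standard fact about finite connected graphs, which I would prove directly rather than merely cite.

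The argument I would give uses a spanning tree. Choose a spanning tree $T$ of $\mathcal{D}$; such a tree exists in any connected graph (for instance, repeatedly delete edges lying on cycles until none remain). Since $T$ is a tree on $n\geq 2$ vertices, it has at least two leaves. Let $v$ be a leaf of $T$. Deleting $v$ leaves a tree $T'$ on the remaining $n-1$ vertices, and $T'$ is a connected spanning subgraph of the induced subdiagram $\mathcal{D}\setminus\{v\}$; hence $\mathcal{D}\setminus\{v\}$ is connected. This gives a connected subdiagram of $\mathcal{D}$ with exactly $n-1$ vertices, as required.

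There is no genuine obstacle here: the content is the elementary observation that a connected graph with at least two vertices has a non-cut vertex (in fact at least two, one for each leaf of $T$). The only points deserving a word of care are that connectivity is insensitive to the Dynkin-diagram data (single versus multiple edges, labels, arrow directions), so passing to the underlying graph loses nothing, and that a ``subdiagram with $n-1$ vertices'' is exactly the deletion of a single vertex. If one prefers to avoid spanning trees, an equally short alternative is to take a path in $\mathcal{D}$ of maximal length and observe that either of its endpoints is a non-cut vertex.
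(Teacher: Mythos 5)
Your proof is correct, but it follows a genuinely different route from the paper's. The paper argues by cases on the type of $\mathcal{D}$: if $\mathcal{D}$ is of compact hyperbolic type, it first shows that $\mathcal{D}$ is either a tree (in which case deleting a vertex attached to only one other vertex works) or is itself a cycle (deleting any vertex works), using hyperbolicity to rule out proper cycles, since a proper cycle would be an affine subdiagram inside a compact diagram; if $\mathcal{D}$ is of non-compact type, it invokes its Corollary 3.4, which says that any proper connected affine subdiagram of a hyperbolic diagram has exactly $n-1$ vertices, so the affine subdiagram guaranteed by non-compactness already is the desired subdiagram. You instead discard the hyperbolic hypothesis entirely (except for connectedness, which follows from indecomposability) and prove the purely graph-theoretic fact that any connected graph on $n\geq 2$ vertices has a non-cut vertex, by deleting a leaf of a spanning tree. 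Your argument is more elementary, self-contained, and more general: it shows the lemma holds for \emph{every} connected Dynkin diagram and has nothing to do with hyperbolicity, whereas the paper's non-compact case depends on Corollary 3.4. What the paper's route buys in exchange is structural information that it reuses immediately: the dichotomy ``tree or cycle'' for compact diagrams, and the identification of the $(n-1)$-vertex subdiagram as affine in the non-compact case, both reappear in the proof of the rank bounds for compact hyperbolic diagrams (Proposition 3.7), so the case analysis is not wasted effort there, even though for the lemma itself your shorter argument suffices.
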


\begin{proof} Suppose $\mathcal{D}$ is of compact hyperbolic type. Then, if $\mathcal{D}$
contains a cycle, it must itself be a cycle;
otherwise, deleting  off-cycle vertices will give a cycle as a proper
subdiagram. Since there are no cycles of finite type, this would give a subdiagram of affine type which is a contradiction. If $\mathcal{D}$ is a cycle,
then deleting a vertex will leave the remaining $n-1$ vertices connected,
because they all lie on the same cycle.
If $\mathcal{D}$ does not contain a cycle, then deleting any vertex that
is connected to only $1$ other vertex
will leave a connected subdiagram of $n-1$ vertices, because all the other
vertices were connected in the original
diagram.

\medskip
\noindent Now suppose $\mathcal{D}$ is non-compact. Then it contains a
proper connected affine diagram,
and so, by Corollary 3.4, we have that this affine subdiagram must have
$n-1$ vertices. Therefore, all hyperbolic
Dynkin diagrams of rank $n$ have a connected subdiagram of rank $n-1$.
\end{proof}

\begin{proposition} Let $\mathcal{D}$ be a hyperbolic Dynkin diagram. Then $rank(\mathcal{D})\leq 10$. 
\end{proposition}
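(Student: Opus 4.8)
The plan is to argue by induction on the rank $n$, using Lemma 3.5 to strip off a single vertex and the classification of connected finite and affine Dynkin diagrams as the engine of the induction. By Lemma 3.5 the hyperbolic diagram $\mathcal{D}$ of rank $n$ contains a connected subdiagram $\mathcal{D}'$ of rank $n-1$, and since $\mathcal{D}'$ is proper it is necessarily of finite or of affine type. Writing $v$ for the vertex $\mathcal{D}\setminus\mathcal{D}'$, the whole problem becomes: for which finite or affine $\mathcal{D}'$ can one attach a single vertex $v$ (with admissible edges) so that \emph{every} proper connected subdiagram of the enlarged diagram remains finite or affine? The strategy is to show that this fails as soon as $n-1$ is too large, because attaching $v$ to an arbitrarily long tail always exposes a proper connected subdiagram that is itself of indefinite type, contradicting hyperbolicity.

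First I would treat the two cases of Lemma 3.5 separately. In the compact case $\mathcal{D}'$ is of finite type, and I would check directly that attaching one vertex to a connected finite diagram of large rank produces an indefinite proper subdiagram, which forces the (smaller) bound here. In the non-compact case I would instead invoke Proposition 3.1 and Corollary 3.4 to arrange that $\mathcal{D}'$ is connected and \emph{affine} of rank $n-1$, and then realize $\mathcal{D}'$ as the canonical affine extension of a finite diagram $\dot{\mathcal{D}}$ of rank $n-2$ by an affine node $v_0$. The key move is to delete $v_0$ from the \emph{full} diagram $\mathcal{D}$: the resulting rank-$(n-1)$ diagram $\mathcal{D}\setminus v_0$ must again be finite or affine and still contains $\dot{\mathcal{D}}$. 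Playing this ``move one node'' game across the various vertex-deletions pins $\dot{\mathcal{D}}$ down to the short list $A_{n-2},B_{n-2},C_{n-2},D_{n-2},E_6,E_7,E_8,F_4,G_2$ and constrains the attachment data, so that the only configurations surviving all the deletion tests for large $n$ are the overextensions of $E_8$, capping the rank at $10$.

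The hard part will be exactly this finite case analysis: ruling out the unbounded $A/B/C/D$ families requires showing that every admissible way of attaching the last one or two nodes to a long tail eventually creates a forbidden connected subdiagram, and keeping this bookkeeping consistent across all single-vertex deletions is where the real work lies. For this reason I would also record, as a cleaner conceptual alternative, the geometric route underlying conditions (1) and (2) of the introduction: realizing the simple roots in a real space with the (symmetrized) bilinear form, the hypothesis that every proper principal submatrix is of finite or affine type forces this form to have Lorentzian signature $(n-1,1)$, so the Weyl group acts as a discrete reflection group on $\mathbb{H}^{n-1}$ whose fundamental chamber, bounded by the $n$ walls, is a finite-volume Coxeter simplex. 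By the classification of hyperbolic Coxeter simplices (see [VS]) such simplices exist only in dimension at most $9$, whence $n-1\leq 9$ and $n\leq 10$, with the compact case restricted to dimension $4$ and hence to rank $5$. The only delicate point in this second approach is establishing the Lorentzian signature $(n-1,1)$ cleanly, which I would do via Cauchy eigenvalue interlacing: a symmetric matrix that is not positive semidefinite but all of whose proper principal submatrices are positive semidefinite has exactly one negative eigenvalue.
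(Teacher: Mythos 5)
Your proposal cannot be compared against a proof in the paper, because the paper gives none: for this proposition it writes only ``We refer the reader to ([K], Ch 4), [Sa] and [Li] for a proof.'' Your two routes are exactly the two strategies the paper alludes to elsewhere (the combinatorial deletion/extension analysis delegated to [K], [Sa], [Li], and the Coxeter-simplex argument sketched in the introduction with the reference [VS]), so the choice of approach is sound. The problem is that, as written, neither route is a proof. In the combinatorial route the entire content of the proposition \emph{is} the case analysis you explicitly defer: to cap the rank at $10$ one must show that no $11$-vertex diagram passes the test, i.e., that every way of attaching one vertex to every connected $10$-vertex affine diagram (the untwisted families $A_9^{(1)}, B_9^{(1)}, C_9^{(1)}, D_9^{(1)}$ and the twisted ones) and to every $10$-vertex finite diagram creates a proper connected subdiagram of indefinite type. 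None of that bookkeeping appears. Moreover, the one concrete structural claim you do make --- that ``the only configurations surviving all the deletion tests for large $n$ are the overextensions of $E_8$'' --- is false: there are four hyperbolic diagrams of rank $10$ (the overextensions $E_{10}$, $BE_{10}$, $DE_{10}$ of $E_8$, $B_8$, $D_8$, together with the dual $CE_{10}$ of $BE_{10}$; see Table 21 of the paper), which is a symptom of the analysis not having been carried out.

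The geometric route has a concrete flaw rather than just an omission: you propose to realize the simple roots using ``the (symmetrized) bilinear form'' and to get the signature $(n-1,1)$ by Cauchy interlacing applied to that symmetric matrix. But the proposition covers \emph{all} hyperbolic diagrams, and $96$ of the $238$ are not symmetrizable (the paper stresses exactly this point; e.g.\ diagram no.\ 183, the compact rank-$5$ cycle, is non-symmetrizable), so for those diagrams the symmetrization you want to interlace does not exist. The repair is to work instead with the Coxeter cosine matrix of the Weyl group, with entries $-\cos(\pi/m_{ij})$, which is symmetric unconditionally and depends only on the products $a_{ij}a_{ji}$; one must then check (i) that finite or affine type of a proper subdiagram forces the corresponding principal cosine submatrix to be positive (semi)definite, and (ii) that all $m_{ij}$ are finite, i.e.\ $a_{ij}a_{ji}\leq 3$ for proper rank-$2$ subdiagrams --- which holds in rank $\geq 4$ (the argument of Corollary 3.3 goes through without symmetrizability) but fails in rank $3$, so the geometric argument must be restricted to rank $\geq 4$, which is harmless for the bound but needs to be said. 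Finally, note that interlacing gives exactly one negative eigenvalue but does not by itself exclude a kernel, so ``signature $(n-1,1)$'' (nondegeneracy) still requires a separate step before the chamber lives in $\mathbb{H}^{n-1}$ and the [VS] classification of finite-volume Coxeter simplices (which exist only up to dimension $9$) can be invoked. In short: right strategies, correctly matched to the literature the paper cites, but the decisive steps are deferred in one route and genuinely broken (though fixable) in the other.
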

\medskip
\noindent We refer the reader to ([K], Ch 4), [Sa] and [Li] for a proof.

\medskip
\begin{proposition} Let $\mathcal{D}$ be a hyperbolic Dynkin diagram. 

\medskip
\noindent (1) If $\mathcal{D}$ has compact hyperbolic type, then $rank(\mathcal{D})\leq 5$. 

\medskip
\noindent (2) If $\mathcal{D}$ is symmetric or symmetrizable and has compact hyperbolic type, then $rank(\mathcal{D})\leq 4$.
\end{proposition}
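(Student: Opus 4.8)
The plan is to reduce to two structural cases and then exploit the cycle-balance criterion of Theorem~\ref{sym}. First I would record the dichotomy already used in the proof of the preceding Lemma: if a compact hyperbolic diagram $\mathcal{D}$ contains a cycle, then $\mathcal{D}$ \emph{is} that cycle, since any off-cycle vertices could be deleted to exhibit the cycle as a proper connected subdiagram, and no cycle is of finite type. Thus $\mathcal{D}$ is either a tree (acyclic) or a single cycle. Since acyclic diagrams are symmetrizable by Corollary~\ref{acyclic}, every compact hyperbolic tree is symmetrizable; so in the tree case parts (1) and (2) coincide, and it suffices to prove $rank(\mathcal{D})\le 4$ there.

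For the cycle case I would count the \emph{heavy} edges, meaning edges with $a_{ij}a_{ji}\ge 2$ (all other edges being ordinary single edges of multiplicity $1$). Deleting a vertex of an $n$-cycle removes its two incident edges and leaves a path on the remaining $n-2$ edges, which must be of finite type. Using that the only finite-type paths are $A_m$, $B_m=C_m$, $F_4$ and $G_2$ — so a finite path carries at most one heavy edge, and an \emph{interior} heavy edge forces the path to be $F_4$ — I would argue as follows. If the cycle has no heavy edge it is $\tilde{A}_{n-1}$, which is affine, not hyperbolic. If it has two or more heavy edges, then since the endpoints of two edges account for at most four vertices, any $n\ge 5$ provides a vertex incident to neither heavy edge whose deletion leaves a finite path carrying two heavy edges, which is impossible; hence $n\le 4$. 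If it has exactly one heavy edge, then for $n\ge 5$ deleting a vertex not incident to it leaves a path with that edge in the interior, which must be $F_4$; this forces $n-2=3$, i.e.\ $n=5$, and forces the heavy edge to be the \emph{asymmetric} double edge of $F_4$, of multiplicity $2$. Consequently the product of the edge multiplicities around the cycle equals $2\neq 1$, so the cycle is unbalanced and, by Theorem~\ref{sym}, non-symmetrizable. This single computation yields both conclusions for cycles: every compact hyperbolic cycle has $rank\le 5$, and a \emph{symmetrizable} one cannot realize the rank-$5$ case and hence has $rank\le 4$.

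It remains to bound compact hyperbolic trees by $4$, and this is the step I expect to require the most care, since it is a finite but genuine enumeration rather than a one-line count. The guiding principle is that every proper connected subdiagram, in particular every connected $(n-1)$-vertex subdiagram, is of finite type and therefore carries at most one multiple edge and at most one branch (degree-$3$) vertex, and never both. Combined with the prohibition of all proper affine subdiagrams — $\tilde{A}$ (cycles), the four-valent star $\tilde{D}_4$, and the affine extensions of $B$, $C$, $F_4$ and $G_2$ — these constraints should force any tree with $n\ge 5$ to contain a proper connected affine subdiagram, contradicting compact type. I would organize this by the location of the (at most one) multiple edge and branch vertex of $\mathcal{D}$, checking that no admissible placement survives to rank $5$.

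Assembling the tree bound ($\le 4$) with the cycle bound ($\le 5$, and $\le 4$ when symmetrizable) gives statement (1) for all compact hyperbolic diagrams and statement (2) for the symmetrizable ones. The elegant core is the cycle count together with the observation that the rank-$5$ examples are precisely the $F_4$-type cycles, whose defining asymmetric double edge makes them unbalanced; the tree enumeration is the more laborious but routine part.
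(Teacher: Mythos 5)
Your treatment of the cycle case is correct and in one respect cleaner than the paper's: you obtain part (2) for cycles directly from Theorem~\ref{sym}, since the unique double edge forced on a rank-$5$ compact hyperbolic cycle is asymmetric (multiplicity $2$), so the cycle is unbalanced and hence non-symmetrizable; the paper instead argues to the same surviving diagram and simply identifies it as the non-symmetrizable no.~183 of its tables. Likewise, reducing part (2) to part (1) on trees via Corollary~\ref{acyclic} is a genuine observation the paper does not need (it shows no trees of rank $>4$ exist at all). One small repair in your cycle argument: it is not true that deleting an \emph{arbitrary} vertex not incident to the heavy edge leaves that edge in the interior of the remaining path --- deleting a neighbour of one of its endpoints leaves it at an end, giving a $B_m/C_m$-type path and no contradiction. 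You need a vertex at cycle-distance at least $2$ from both endpoints of the heavy edge, and such a vertex exists precisely because $n\ge 5$; with that choice the argument goes through.

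The genuine gap is the tree case, which is where essentially all of the paper's work lies and which you explicitly defer (``these constraints should force\dots'', ``I would organize this by\dots checking that no admissible placement survives''). The assertion you need does not follow formally from the constraints you list; it requires the enumeration. The paper carries it out: after reducing to (i) a linear chain with exactly one double edge, or (ii) a single branch vertex with three branches and all single edges, it disposes of (i) by a trichotomy on the shorter arm $m$ --- $m=1$ gives $B_{n+1}$ or $C_{n+1}$ (finite, contradicting hyperbolicity), while $m=2$ or $m\ge 3$ gives a diagram that is, or properly contains, $E_6^{(2)}$ or $F_4^{(1)}$ --- and disposes of (ii) by the analogous trichotomy on branch lengths $k\le m\le n$, using $D_{n+3}$, $E_6$, $E_7$, $E_8$, $E_8^{(1)}$, $E_7^{(1)}$ and $E_6^{(1)}$. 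Note also that your stated target --- that the constraints force a proper connected \emph{affine} subdiagram --- is not quite what the enumeration shows: several admissible configurations (for instance the chain with its double edge at an end, or the star with three single-edge arms of length one) contain no affine subdiagram at all and are excluded only because they are themselves of finite or affine type, contradicting hyperbolicity rather than compactness. So the case analysis must be run against the trichotomy ``finite / affine / contains a proper affine subdiagram,'' not against the last alternative alone. Until that enumeration is written out, part (1) for trees --- and with it both parts of the proposition --- remains unproven.
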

\medskip
\noindent {\it Proof:} Let $\ell$ denote the maximal rank of any Dynkin diagram of compact hyperbolic type. Then $\ell$ is at least 4, since we have 
the following symmetrizable diagram of compact hyperbolic type (no. 136 in Section 7):

\medskip

\begin{figure}[h!]
\resizebox{.55 in}{.51 in}{\includegraphics*[viewport=140 680 182 720 ]{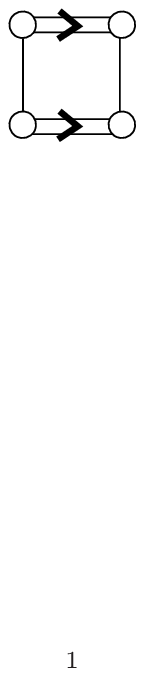}}
\end{figure}

\medskip
\noindent which has $B_3$, $C_3$, $B_2$, $A_2$, and $A_1$ as its proper connected subdiagrams.

\medskip
\noindent Let $\mathcal{D}$ be a Dynkin diagram of compact hyperbolic type and rank $>4$. First suppose $\mathcal{D}$ contains a cycle. Then $\mathcal{D}$ must itself be a cycle; otherwise, deleting  off-cycle vertices will give a cycle as a proper subdiagram. Since there are no cycles of finite type, this would give a subdiagram of affine type which is a contradiction. Moreover $\mathcal{D}$ cannot contain all single edges, since this would mean $\mathcal{D}$ is affine of type $A^{(1)}_{\ell}$. Also, the only multiple edges $\mathcal{D}$ contains are double edges with single arrows; otherwise, since $\mathcal{D}$ has more than four vertices  there is a three-vertex  subdiagram which is not of finite type. (Note that $G_2$ is the only finite diagram containing a multiple edge other than a double edge with a single arrow and has only two vertices). Similarly, $\mathcal{D}$ contains at most one multiple edge: otherwise, since $\mathcal{D}$ has at least five vertices, we can delete one and obtain a diagram with two multiple edges, which cannot be of finite type. Thus the only cyclic Dynkin diagrams of compact hyperbolic type contain a unique double edge:

\medskip

\begin{figure}[h!]
\resizebox{.67 in}{.64 in}{\includegraphics*[viewport=137 670 188 719 ]{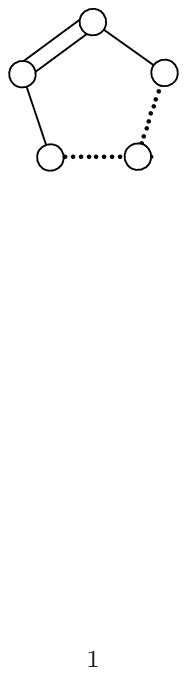}}
\end{figure}

\medskip

\noindent Such a diagram contains the diagram

\medskip
\textcolor{white}. 

\begin{figure}[h!]
\resizebox{1.74 in}{.16 in}{\includegraphics*[viewport=200 702 340 715 ]{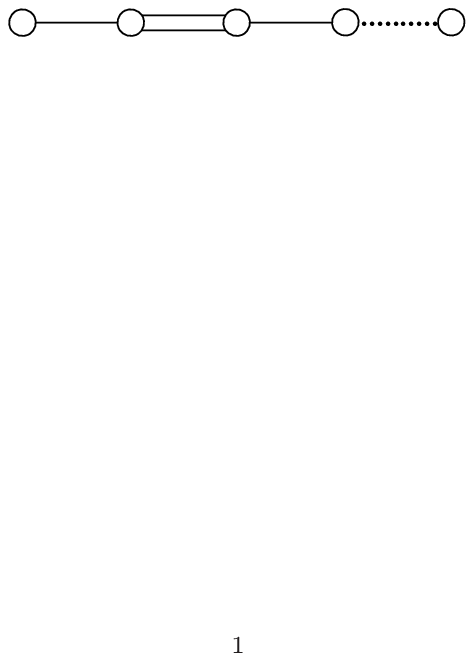}}
\end{figure}

\medskip
\noindent as a proper subdiagram, hence is of compact type only when it contains five vertices. Thus if $\mathcal{D}$ contains a cycle, $\mathcal{D}$ must be diagram no. 183 in Section 7:
      
\vspace{18.25 pt}
\begin{figure}[h!]
\resizebox{.64 in}{.64 in}{\includegraphics*[viewport=137 669 187 719 ]{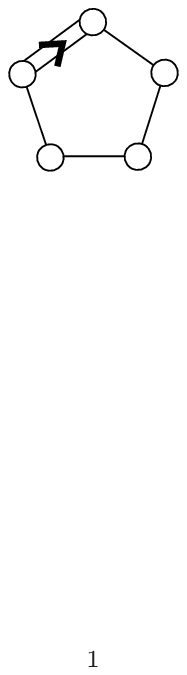}}
\end{figure}

\medskip
\noindent Now suppose $\mathcal{D}$ is a tree. As in the previous case, the only multiple edges $\mathcal{D}$ can contain are double edges with  single arrows. As in the previous case, $\mathcal{D}$ may contain no more than one multiple edge; otherwise $\mathcal{D}$ is either affine of the form:

\medskip

\begin{figure}[h!]
\resizebox{1.36 in}{.18 in}{\includegraphics*[viewport=215 704 323 718 ]{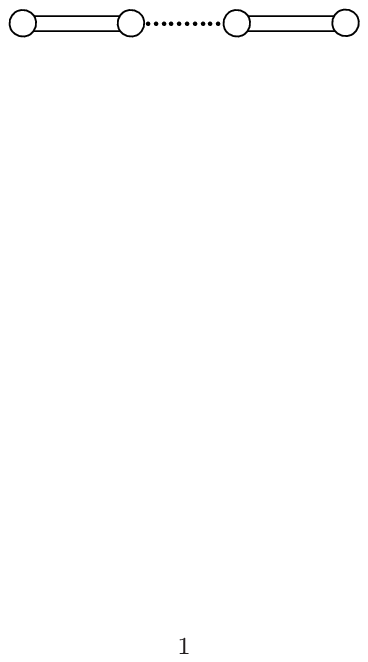}}
\end{figure}

\medskip
\noindent or contains such a diagram as a proper subdiagram. Also, $\mathcal{D}$ cannot contain  both a double edge and a branch point (that is, a vertex connected to at least three other vertices), since then $\mathcal{D}$ would either be affine of the form:

\medskip

\begin{figure}[h!]
\resizebox{1.32 in}{.36 in}{\includegraphics*[viewport=112 692 211 719]{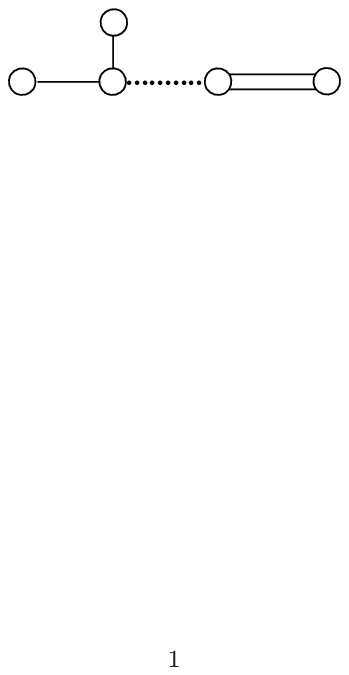}}
\end{figure}

\medskip
\noindent or would contain such a diagram as a proper subdiagram. Likewise, $\mathcal{D}$ cannot contain two branch points, since then it would contain the affine diagram:

\medskip

\begin{figure}[h!]
\resizebox{1.37 in}{.36 in}{\includegraphics*[viewport=254 692 357 719 ]{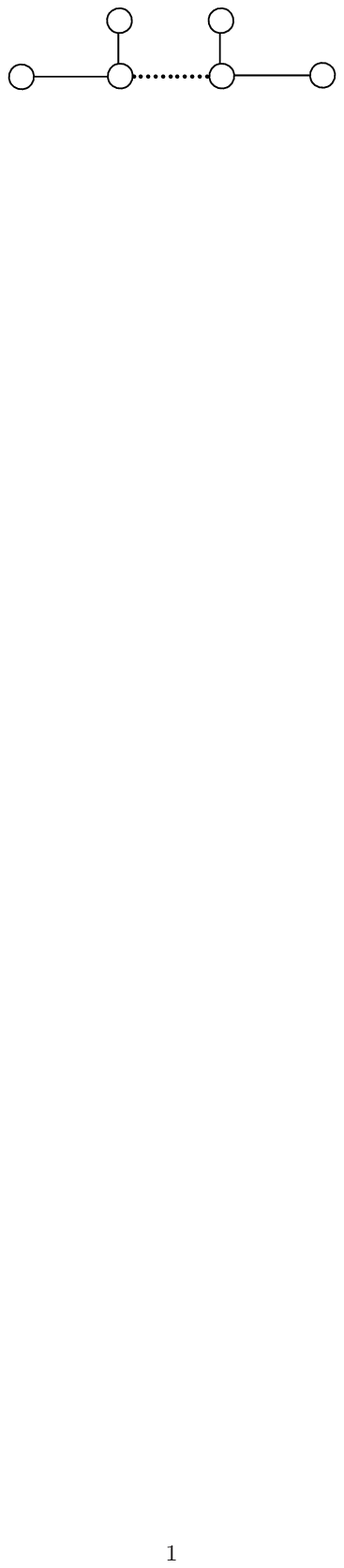}}
\end{figure}

\medskip
\noindent Note that this includes the case of a single vertex connected to four or more other vertices.

\medskip
\noindent Since a linear chain containing all single edges is the finite type diagram $A_n$, the two remaining possibilities are 

\medskip
\noindent (i) $\mathcal{D}$ is a linear chain containing one double edge, and 

\medskip
\noindent (ii) $\mathcal{D}$ contains one branch point, with three vertices attached,  and all single edges.

\medskip

\begin{figure}[h!]
\resizebox{3.62 in}{1.06 in}{\includegraphics*[viewport=168 632 451 715 ]{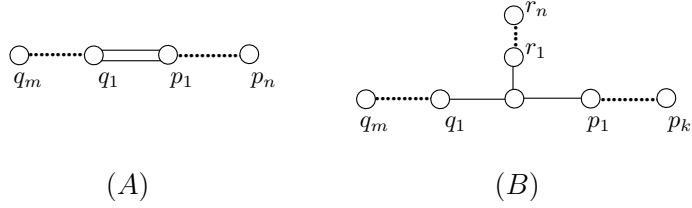}}
\caption{In diagram $(A)$, $\mathcal{D}$ is a linear chain containing one double edge.  In diagram $(B)$, $\mathcal{D}$ contains one branch point with three vertices attached, and all single edges.}
\end{figure}

\medskip

\noindent We now refer to the notation of Figure 1. For case (i), we may without loss of generality assume that $m\leq n$. If $m=1$, then $\mathcal{D}$ is $B_{n+1}$ or $C_{n+1}$, so $m\neq 1$. If $m=2$, then $n\geq 3$ since $\mathcal{D}$ contains at least five vertices; then $\mathcal{D}$ either is or contains as a proper subdiagram $E_6^{(2)}$ or $F_4^{(1)}$, so $m\neq 2$. If $m\geq 3$, then $\mathcal{D}$ contains $E_6^{(2)}$ or $F_4^{(1)}$ as a proper subdiagram, so case (i) cannot occur.

\medskip
\noindent Now we eliminate case (ii). We may assume $k\leq m\leq n$. First suppose $k=1$. If $m=1$, then the diagram is $D_{n+3}$. If $m=2$, then $\mathcal{D}$ is either $E_6$, $E_7$, $E_8$, $E_8^{(1)}$, or contains $E_8^{(1)}$ as a proper subdiagram. If $m\geq 3$, then $\mathcal{D}$ is either $E_7^{(1)}$ or contains $E_7^{(1)}$ as a proper subdiagram. This eliminates the possibility that $k=1$. If $k\geq 2$, then $\mathcal{D}$ is either $E_6^{(1)}$ or contains $E_6^{(1)}$ as a proper subdiagram. Thus case (ii) cannot occur.

\medskip
\noindent This completes the proof that there are no acyclic compact hyperbolic diagrams of rank greater than four. Since the only compact hyperbolic diagram of rank greater than 4 is the non-symmetrizable rank $5$ cycle with a unique double edge (diagram 183 in Table 16, Section 7), the result follows. $\square$

\section{Root lengths} \label{rootlengths}

\medskip
\noindent Let $\rho$ denote the number of distinct root lengths of real roots 
in a root system corresponding to a Dynkin diagram $\mathcal{D}$. It is observed in 
([K], \S5.1) that if $m$ is the maximum number of arrows in a coherently directed path 
in $\mathcal{D}$, then there are simple roots of $m+1$ distinct root lengths. In fact,
we have that for simple roots $\alpha_{i},\alpha_{j}$, 
$$\frac{d_{j}}{d_{i}}=\frac{(\alpha_{j}|\alpha_{j})}{(\alpha_{i}|\alpha_{i})},$$
where $D=diag(d_{1},...,d_{\ell})$ gives a symmetrization of the generalized Cartan 
matrix $A$ associated to $\mathcal{D}$. Thus, the number of simple roots 
with distinct root length equals the number of distinct $d_{i}$ in $D$. 

\medskip\noindent A real root is the image of a simple root under the action of an element of the Weyl group. Note that the 
elements of the Weyl group preserve root length. Therefore, the number of distinct lengths of
real roots in a root system equals the number of distinct lengths of the simple roots of the root system, and 
so  $\rho$ equals the number of distinct $d_{i}$ in $D$.

\medskip\noindent It is well known that for finite root systems, $\rho$ is at most $2$, and for affine root systems, $\rho$ 
is at most $3$ ([K],\S5.1). We recall that all finite and affine Dynkin diagrams are symmetrizable. Hence, 
in a discussion about root lengths for such root systems, the assumption of symmetrizability is not necessary.  

\begin{proposition} Let $\rho$ denote the number of distinct root lengths
of real roots in a root system corresponding to a Dynkin diagram $\mathcal{D}$. 
If $\mathcal{D}$ is  symmetrizable and hyperbolic then $\rho$ is at most $4$.
\end{proposition}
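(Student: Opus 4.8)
The plan is to reduce the statement to a counting problem for the symmetrizing entries $d_i$ and then exploit the structural fact, recalled just above, that $\rho$ equals the number of distinct values taken by $d_1,\dots,d_\ell$, where $D=\mathrm{diag}(d_1,\dots,d_\ell)$ symmetrizes $A$ and $\ell=\mathrm{rank}(\mathcal{D})$. First I would invoke the Lemma guaranteeing that a hyperbolic diagram of rank $\ell$ always contains a connected subdiagram $\mathcal{D}'$ on $\ell-1$ vertices. Since $\mathcal{D}'$ is a proper connected subdiagram of a hyperbolic diagram, it is of finite or of affine type, and in either case it is symmetrizable.

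Next I would verify that the restriction of $D$ to the vertices of $\mathcal{D}'$ is itself a symmetrization of the corresponding principal submatrix $A'$ of $A$: if $DA$ is symmetric, then so is $D'A'$, where $D'=\mathrm{diag}(d_i : v_i\in\mathcal{D}')$, by exactly the argument used in the proof of Proposition \ref{drel}. Because the symmetrization of $A'$ is unique up to a scalar multiple, the number of distinct values among $\{d_i : v_i\in\mathcal{D}'\}$ is precisely the number $\rho'$ of distinct real root lengths of the root system attached to $\mathcal{D}'$. I would then quote the classical bounds recalled above: $\rho'\leq 2$ when $\mathcal{D}'$ is finite and $\rho'\leq 3$ when $\mathcal{D}'$ is affine, so in all cases $\rho'\leq 3$.

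Finally, $\mathcal{D}$ has exactly one vertex $v$ not lying in $\mathcal{D}'$, and $d_v$ can contribute at most one further value to the list $d_1,\dots,d_\ell$. Hence the total number of distinct $d_i$ is at most $\rho'+1\leq 4$, which yields $\rho\leq 4$. Note that the rank bound $\ell\leq 10$ plays no role here; the only input about hyperbolic type is funneled through the existence of the $(\ell-1)$-vertex connected subdiagram.

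The argument is short precisely because the real work is carried by the Lemma producing an $(\ell-1)$-vertex connected subdiagram, which drops us into the already-understood finite/affine world. The only point requiring genuine care is the compatibility of the global symmetrization $D$ with its restriction to $\mathcal{D}'$, so that the $\leq 3$ bound on finite/affine root lengths transfers verbatim to the sub-collection $\{d_i : v_i\in\mathcal{D}'\}$ sitting inside $\mathcal{D}$; once that is in hand, adding back the single discarded vertex costs at most one extra root length, and the main obstacle—if any—is simply recognizing that deleting one vertex suffices to reach a case where $\rho$ is classically bounded.
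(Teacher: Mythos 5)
Your proof is correct and follows essentially the same route as the paper's: invoke the lemma producing a connected $(\ell-1)$-vertex subdiagram $\mathcal{D}'$, note that $\mathcal{D}'$ is of finite or affine type and hence carries at most $3$ distinct real root lengths, and observe that the one remaining vertex contributes at most one further length, giving $\rho\leq 4$. The only difference is one of rigor: you make explicit the compatibility of the global symmetrization $D$ with its restriction to $\mathcal{D}'$ (via uniqueness of the symmetrization up to scalar on the indecomposable submatrix), a step the paper's proof passes over silently when it asserts that the simple roots of $\mathcal{D}'$ have at most $3$ lengths inside the ambient root system.
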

 
\begin{proof} Let $\mathcal{D}$ be a hyperbolic Dynkin diagram of rank $n$, and let
$\mathcal{D}'$ be any connected subdiagram with $n-1$ vertices. Since $\mathcal{D}$ 
is hyperbolic, $\mathcal{D}'$ must be an affine or finite Dynkin diagram. Therefore, the
simple roots represented in $\mathcal{D}'$ have at most $3$ root lengths. The additional
vertex in $\mathcal{D}$ corresponds to a simple root that may have a different root
length from all the roots in $\mathcal{D}'$. Thus, $\mathcal{D}$ has at most $4$ distinct 
root lengths. \end{proof}

\medskip
\noindent  The only  affine root systems with the maximal number of real root lengths are $A_{2\ell}^{(2)}$, $\ell\geq 2$ ([K]). The only  hyperbolic root system with the maximal number of real root lengths is no.  173  in the tables of Section 7.

\section{Weyl group orbits on real roots}

\medskip
\noindent Let $A$ be a  symmetrizable generalized Cartan matrix. Let $\mathcal{D}=\mathcal{D}(A)$ be the corresponding Dynkin diagram with vertices indexed by $I=\{1,2,\ldots,\ell \}$. We say that $\mathcal{D}$ is {\it simply laced} if  $\mathcal{D}$ contains no multiple edges. We let $\mathcal{D}_{\ast}$ denote the graph obtained from $\mathcal{D}$ by deleting all multiple edges, including arrows and edge labels. Let $\mathcal{D}_1,\dots,\mathcal{D}_s$ denote the connected subdiagrams of $\mathcal{D}_{\ast}$. Then each $\mathcal{D}_i$ is simply laced. We call $\mathcal{D}_{\ast}$ the {\it simply laced skeleton} of $\mathcal{D}$. We may describe the graph $\mathcal{D}_{\ast}$ as follows

\medskip
$Vertices(\mathcal{D}_{\ast})=Vertices(\mathcal{D})$ with the same labelling, that is, indexed by $I=\{1,2,\ldots,\ell \}$

$Edges(\mathcal{D}_{\ast})=\bigcup_{i=1}^s \ Edges(\mathcal{D}_i)$

\medskip
\noindent  Vertices $v_i$ and $v_j$ are adjacent in $\mathcal{D}_{\ast}$ if and only if $v_i$ and $v_j$  are connected in $\mathcal{D}$ by a single edge with no arrows or edge labels which occurs if and only if  $a_{ij}=a_{ji}=-1$. 

\medskip
\noindent If $\mathcal{D}$ is simply laced  then $\mathcal{D}_{\ast}=\mathcal{D}$. Note that $\mathcal{D}_{\ast}$ may not be connected.

\medskip
\noindent   In [CCP], the authors prove the following.
\begin{theorem} Let $\mathcal{D}$ be a Dynkin diagram, let $\Pi=\{\alpha_1,\dots ,\alpha_{\ell}\}$ be the simple roots of the corresponding root system and let $W$ denote the Weyl group. Let  $\mathcal{D}^J_{\ast}$ denote a connected subdiagram of $\mathcal{D}_{\ast}$ whose vertices are indexed by $J\subseteq I$. Then 

\medskip
\noindent (1) If $j\in J$ and $k \notin  J$, we have ${W}\alpha_j\cap {W}\alpha_k=\varnothing$.

\medskip
\noindent (2) For all $i,j\in J$ there exists $w\in {W}$ such that $\alpha_i=w\alpha_j$.

\medskip
\noindent (3) For all $i,j\in J$, ${W}\alpha_i={W}\alpha_j$.

\end{theorem}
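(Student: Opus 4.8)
The plan is to prove (2) and (3) together---they are two phrasings of the single assertion that all simple roots indexed by one connected component of $\mathcal{D}_\ast$ share a common $W$-orbit---and to concentrate the effort on the separation statement (1). For (2), I would first treat a single edge. If $v_i,v_j$ are adjacent in $\mathcal{D}_\ast$ then, by the description of $\mathcal{D}_\ast$, $a_{ij}=a_{ji}=-1$, so $s_j(\alpha_i)=\alpha_i-a_{ji}\alpha_j=\alpha_i+\alpha_j$ and then $s_i(\alpha_i+\alpha_j)=-\alpha_i+(\alpha_j-a_{ij}\alpha_i)=\alpha_j$; thus $w=s_is_j$ satisfies $w\alpha_i=\alpha_j$. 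Since $\mathcal{D}^J_\ast$ is connected, any two of its vertices are joined by a path of single edges, so composing such elements along the path shows that for all $i,j\in J$ there is $w\in W$ with $\alpha_i=w\alpha_j$. This is (2), and (3) is the immediate reformulation $W\alpha_i=W\alpha_j$.

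The content is (1): if $j$ and $k$ lie in different components of $\mathcal{D}_\ast$, then $\alpha_j$ and $\alpha_k$ must lie in distinct $W$-orbits. My approach is to exhibit $W$-invariants that already separate $\alpha_j$ from $\alpha_k$. The first is the root norm: the symmetric bilinear form $(\cdot\mid\cdot)$ is $W$-invariant and $(\alpha_i\mid\alpha_i)=2d_i$, so any two roots in one orbit have equal norm. By Proposition~\ref{drel} a single edge forces $d_i=d_j$, hence the $d_i$ are constant along each component of $\mathcal{D}_\ast$; therefore if $(\alpha_j\mid\alpha_j)\neq(\alpha_k\mid\alpha_k)$ we are done, and we are reduced to the case of equal norm.

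For equal-norm pairs I would pass to the action of $W$ on the quotient $Q/2Q$ of the root lattice $Q=\bigoplus_i\mathbb{Z}\alpha_i$, which is well defined because $W$ preserves $Q$, and show that the classes of $\alpha_j$ and $\alpha_k$ lie in distinct $W$-orbits of $Q/2Q$. The model is the affine matrix $A_1^{(1)}=\bigl(\begin{smallmatrix}2&-2\\-2&2\end{smallmatrix}\bigr)$: its two simple roots have equal norm yet lie in different orbits, the distinguishing feature being the parity of the coordinates, which is preserved because the off-diagonal entries $-2$ are even, so every reflection acts trivially modulo $2$.

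I expect this last step to be the main obstacle, and two points need care. First, the naive candidate invariant---the sum $\sum_{m}n_m$ over the coefficients of one component, reduced modulo $2$---is \emph{not} $W$-invariant, since the single edges internal to a component (the very edges responsible for merging those roots into one orbit) change it, as the type-$A_2$ orbit $\{\alpha_1,\alpha_2,\alpha_1+\alpha_2\}$ already shows. Second, equal norm can be produced by a chain of asymmetric edges with compensating multiplicities, so one may not assume that the edges joining two distinct components are symmetric or have even entries. Consequently one must directly track the full $W$-orbit of the class of $\alpha_j$ inside $Q/2Q$ and verify that it never reaches the class of $\alpha_k$; this combinatorial control of the mod-$2$ orbit, governed by the precise pattern of odd entries of $A$, is where the real work lies.
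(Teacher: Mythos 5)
First, a point of comparison that matters for this review: the paper itself contains \emph{no} proof of this theorem --- it is stated as a result quoted from [CCP] (``In [CCP], the authors prove the following''), so there is no internal argument to measure yours against, and your proposal must be judged on its own terms. Your treatment of (2) and (3) is correct and complete: adjacency in $\mathcal{D}_\ast$ means $a_{ij}=a_{ji}=-1$, the computation $s_is_j\alpha_i=\alpha_j$ is right, and composing such elements along a path through the connected subdiagram $\mathcal{D}^J_\ast$ gives (2), of which (3) is a restatement. Your reduction of (1) to the equal-norm case via Proposition~\ref{drel} is also sound.

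The genuine gap is in the equal-norm case of (1), and it is worse than an unfinished computation: the invariant you propose there (the $W$-orbit of the class of $\alpha_j$ in $Q/2Q$) provably cannot separate the orbits, even for diagrams appearing in this paper's own tables. Take the rank-3 compact hyperbolic diagram with two $G_2$-type edges,
\[
A=\begin{pmatrix}2&-1&0\\-3&2&-3\\0&-1&2\end{pmatrix},
\]
which is acyclic, hence symmetrizable, with $D=\mathrm{diag}(3,1,3)$. Here $\mathcal{D}_\ast$ has three singleton components and $(\alpha_1|\alpha_1)=(\alpha_3|\alpha_3)=6$, so the norm does not distinguish $\alpha_1$ from $\alpha_3$ and you are in your residual case. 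But modulo $2$ the reflections act by $s_1\colon(c_1,c_2,c_3)\mapsto(c_1+c_2,c_2,c_3)$, $s_2\colon(c_1,c_2,c_3)\mapsto(c_1,c_1+c_2+c_3,c_3)$, $s_3\colon(c_1,c_2,c_3)\mapsto(c_1,c_2,c_2+c_3)$ --- the entries $-3$ are odd, exactly the situation you flagged --- and then $s_2s_1s_3s_2$ carries $\bar\alpha_1=(1,0,0)$ through $(1,1,0)\mapsto(1,1,1)\mapsto(0,1,1)$ to $(0,0,1)=\bar\alpha_3$. So the mod-$2$ orbits of $\bar\alpha_1$ and $\bar\alpha_3$ coincide, while the theorem asserts (correctly, see below) that $W\alpha_1\cap W\alpha_3=\varnothing$. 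The same collapse already occurs in rank $2$ for the symmetric matrix with $a_{12}=a_{21}=-3$. Hence no amount of ``combinatorial control of the mod-$2$ orbit'' can complete your argument; the strategy itself fails, not just its execution.

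What is missing is an invariant of a different nature, and conjugacy of reflections supplies one. If $w\alpha_j=\alpha_k$ then $ws_jw^{-1}$ is the reflection in the real root $\alpha_k$, i.e.\ $ws_jw^{-1}=s_k$, so $s_j$ and $s_k$ are conjugate in $W$. In any Coxeter system, two generators can be conjugate only if they are joined by a path of edges with \emph{odd} Coxeter label: in the abelianization of $W$, a relation $(s_is_j)^{m_{ij}}=1$ identifies $s_i$ with $s_j$ precisely when $m_{ij}$ is odd, and conjugate elements have equal images there. Since $a_{ij}a_{ji}=0,1,2,3,\geq 4$ give $m_{ij}=2,3,4,6,\infty$ respectively, the odd-labeled edges are exactly the single edges of $\mathcal{D}$, and (1) follows. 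In the example above all labels are $6$ or $2$, so the three generators are pairwise non-conjugate and the three orbits are indeed distinct --- a fact to which both of your proposed invariants are blind.
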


\medskip
\noindent The following corollary is immediate.
\begin{corollary} \label{maincor}  Let $\mathcal{D}$ be a Dynkin diagram, let $\Pi=\{\alpha_1,\dots ,\alpha_{\ell} \}$ be the simple roots of the corresponding root system and let $W$ denote the Weyl group. Then for $i\neq j$, the simple roots $\alpha_i$ and $\alpha_j$ are in the same $W$-orbit if and only if vertices $v_i$ and $v_j$ in the Dynkin diagram corresponding to $\alpha_i$ and $\alpha_j$ are connected by a path of single edgess in $\mathcal{D}$.

\end{corollary}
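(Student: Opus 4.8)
The plan is to recognize the statement as a direct reformulation of the preceding Theorem, once one identifies what ``connected by a path of single edges'' means in terms of the simply laced skeleton $\mathcal{D}_{\ast}$. By the description of $\mathcal{D}_{\ast}$ given above, two vertices $v_i$ and $v_j$ are adjacent in $\mathcal{D}_{\ast}$ precisely when they are joined in $\mathcal{D}$ by a single edge, i.e. when $a_{ij}=a_{ji}=-1$. Consequently a path of single edges in $\mathcal{D}$ from $v_i$ to $v_j$ is exactly a path in $\mathcal{D}_{\ast}$, so $v_i$ and $v_j$ are connected by a path of single edges in $\mathcal{D}$ if and only if they lie in the same connected component of $\mathcal{D}_{\ast}$.

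First I would fix the index set $J\subseteq I$ of the connected component $\mathcal{D}^J_{\ast}$ of $\mathcal{D}_{\ast}$ that contains the vertex $v_i$. For the ``if'' direction, suppose $v_i$ and $v_j$ are connected by a path of single edges; then $j\in J$ as well, and part (3) of the Theorem immediately gives $W\alpha_i=W\alpha_j$, so $\alpha_i$ and $\alpha_j$ lie in the same $W$-orbit. For the ``only if'' direction, suppose $\alpha_i$ and $\alpha_j$ lie in the same $W$-orbit, so that $W\alpha_i\cap W\alpha_j\neq\varnothing$. If $v_j$ did not lie in the same component as $v_i$, then $i\in J$ while $j\notin J$, and part (1) of the Theorem would force $W\alpha_i\cap W\alpha_j=\varnothing$, a contradiction. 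Hence $j\in J$, and $v_i$ and $v_j$ are connected by a path of single edges in $\mathcal{D}$.

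There is no substantial obstacle here: the entire content is supplied by the Theorem of [CCP], and the corollary amounts to the bookkeeping observation that the connected components of $\mathcal{D}_{\ast}$ are precisely the equivalence classes of vertices under the relation ``joined by a path of single edges.'' The only point requiring any care is the hypothesis $i\neq j$, which guarantees that we are speaking of a genuine nontrivial path rather than a single isolated vertex; this causes no difficulty in applying parts (1) and (3), which already govern all pairs of vertices according to whether or not they share a component.
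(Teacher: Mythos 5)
Your proposal is correct and takes essentially the same route as the paper: the paper simply declares this corollary ``immediate'' from the quoted theorem of [CCP], and your argument spells out exactly that derivation, identifying paths of single edges with connectivity in the skeleton $\mathcal{D}_{\ast}$ and then applying parts (1) and (3) of the theorem (reading $\mathcal{D}^J_{\ast}$ as a connected component, as the paper intends). Nothing further is needed.
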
 

\noindent
If a single $W$-orbit contains $m$ simple roots $\alpha_{i_1}, \alpha_{i_2}, \ldots, \alpha_{i_m}$, this orbit is written as $$W\{\alpha_{i_1}, \alpha_{i_2}, \ldots, \alpha_{i_m}\}.$$

\begin{corollary} \label{realroots} Let $\mathcal{D}$ be a Dynkin diagram, let $\Pi=\{\alpha_1,\dots ,\alpha_{\ell} \}$ be the simple roots of the corresponding root system and let $W$ denote the Weyl group. Let  $\mathcal{D}^{J_1}_{\ast}$, $\mathcal{D}^{J_2}_{\ast}$, $\dots$,  $\mathcal{D}^{J_t}_{\ast}$ denote the connected subdiagrams of $\mathcal{D}_{\ast}$ whose vertices are indexed by $J_1,J_2,\dots ,J_t\subseteq I$. Then the set of real roots $\Phi=W\Pi$ is
$$\Phi=W\{\alpha_{J_1}\}\sqcup W\{\alpha_{J_2}\}\sqcup\dots \sqcup W\{\alpha_{J_t}\},$$
where $\alpha_{J_s}$ denotes the subset of simple roots indexed by the subset $J_s \subseteq I$.
\end{corollary}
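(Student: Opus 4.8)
The plan is to derive this corollary directly from the theorem of [CCP] stated above, treating it essentially as a bookkeeping statement over the connected components of $\mathcal{D}_\ast$. First I would record two elementary facts. Since every real root is by definition a Weyl translate of a simple root, we have $\Phi = W\Pi = \bigcup_{i\in I} W\alpha_i$. Since $\mathcal{D}^{J_1}_\ast,\dots,\mathcal{D}^{J_t}_\ast$ are precisely the connected components of $\mathcal{D}_\ast$, and $\mathcal{D}_\ast$ has the same vertex set as $\mathcal{D}$, the index sets $J_1,\dots,J_t$ form a partition of $I$; in particular $\bigcup_{s=1}^t J_s = I$ and $J_s\cap J_{s'}=\varnothing$ for $s\neq s'$.

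Next I would establish the union. Fix a component index $s$. By part (3) of the theorem, all simple roots $\alpha_i$ with $i\in J_s$ lie in one and the same $W$-orbit, so that $\bigcup_{i\in J_s} W\alpha_i = W\{\alpha_{J_s}\}$, the single orbit through the simple roots indexed by $J_s$; in particular $W\{\alpha_{J_s}\}=W\alpha_i$ for any chosen $i\in J_s$. Summing over the partition of $I$ then gives
$$\Phi = \bigcup_{i\in I} W\alpha_i = \bigcup_{s=1}^t \bigcup_{i\in J_s} W\alpha_i = \bigcup_{s=1}^t W\{\alpha_{J_s}\}.$$

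To upgrade this union to a \emph{disjoint} union I would invoke part (1) of the theorem. For $s\neq s'$ choose $i\in J_s$ and $j\in J_{s'}$; since the $J$'s are disjoint we have $j\notin J_s$, so part (1) yields $W\alpha_i\cap W\alpha_j=\varnothing$. As $W\{\alpha_{J_s}\}=W\alpha_i$ and $W\{\alpha_{J_{s'}}\}=W\alpha_j$ by the previous paragraph, the two orbits are disjoint, and hence the union displayed above is in fact the disjoint union $\Phi = W\{\alpha_{J_1}\}\sqcup\dots\sqcup W\{\alpha_{J_t}\}$ asserted in the statement.

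The argument is routine once the [CCP] theorem is in hand, so there is no serious obstacle; the one point demanding care is that the $J_s$ must index the \emph{maximal} connected subdiagrams (the components) of $\mathcal{D}_\ast$, rather than arbitrary connected subdiagrams. This is exactly what makes part (1) applicable for disjointness: if some $J_s$ were a proper connected subdiagram of a component, then a vertex $k\notin J_s$ lying in the same component would instead satisfy $W\alpha_k=W\alpha_i$ by Corollary \ref{maincor}, and disjointness would fail. I would therefore state at the outset that $\mathcal{D}^{J_1}_\ast,\dots,\mathcal{D}^{J_t}_\ast$ exhaust the connected components of $\mathcal{D}_\ast$, so that the hypotheses of parts (1) and (3) are met for each $J_s$.
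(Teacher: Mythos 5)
Your proof is correct and is exactly the argument the paper intends: the paper offers no separate proof of this corollary, treating it as an immediate consequence of the [CCP] theorem, and your derivation (part (3) to collapse each component's simple roots into one orbit, part (1) for disjointness across components, plus the definition of real roots as $W\Pi$) is precisely that intended bookkeeping. Your closing remark that the $J_s$ must be the \emph{maximal} connected subdiagrams of $\mathcal{D}_{\ast}$ is a worthwhile clarification, since that is what the paper's phrase ``the connected subdiagrams of $\mathcal{D}_{\ast}$'' means and what makes part (1) applicable.
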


\medskip
\noindent Given any Dynkin diagram $\mathcal{D}$, we can therefore determine the disjoint orbits of the Weyl group on real roots by determining the simply laced skeleton $\mathcal{D}_{\ast}$. We tabulate the disjoint orbits  for each hyperbolic Dynkin diagram $\mathcal{D}$ in Section 7. 

\section{Extended and overextended Dynkin diagrams} \label{extended}

Let $\Delta$ be a finite root system, that is, the Dynkin diagram of a root system of a finite dimensional Lie algebra. We assume that $\Delta$ is indecomposable. In this case there is no decomposition of $\Delta$ into a union of 2 subsets where every root in one subset is orthogonal to every root in the other. Let $\Pi=\{\alpha_1,\dots ,\alpha_{\ell}\}$ be the simple roots of $\Delta$. For $\Delta$ indecomposable, there is a unique root $\delta$ called the {\it maximal root} that is a linear combination of the simple roots with positive integer coefficients. The maximal root $\delta$  satisfies $(\delta,\alpha)\geq 0$ for every simple root $\alpha$ and $(\delta,\beta)> 0$ for some simple root $\beta$, where $(\cdot\mid\cdot)$ is the positive definite symmetric bilinear form corresponding to $\Delta$ ([OV]).

\medskip
\noindent  Let $\alpha_0=-\delta$ and let $\Pi'=\Pi\cup\{\alpha_0\}$. Then $\Pi'$ is called the {\it extended system of simple roots} corresponding to $\Pi$. The Dynkin diagram of $\Pi'$ is called the {\it extended Dynkin diagram} corresponding to $\Delta$. An extended Dynkin diagram has a vertex labeled 0 corresponding to the root $\alpha_0$. All untwisted affine Dynkin diagrams  are extended Dynkin diagrams ([OV]).

\medskip
\noindent A generalized Cartan matrix $A$ is called {\it Lorentzian} if $det(A)\neq 0$ and $A$ has  exactly one negative eigenvalue. A  {\it Lorentzian Dynkin diagram} is the Dynkin diagram of a  Lorentzian generalized Cartan matrix. The notion of an extended Dynkin diagram first appeared in the classification of semisimple algebraic groups (see for example ([Ti])). I. Frenkel was the first to describe certain Lorentzian Dynkin diagrams obtained as further extensions of extended Dynkin diagrams ([F]).  A {\it Lorentzian extension} $\mathcal{D}$ of an untwisted affine Dynkin diagram $\mathcal{D}_0$ is a Dynkin diagram obtained by adding one vertex, labeled $-1$, to $\mathcal{D}_0$ and connecting the vertex $-1$ to the vertex of $\mathcal{D}_0$ labeled 0 with a single edge.

\medskip
\noindent Every Lorentzian extension of an untwisted affine Dynkin diagram is a Lorentzian Dynkin diagram, in fact a hyperbolic Dynkin diagram. There are also Lorentzian extensions of twisted affine Dynkin diagrams (see for example, ([HPS], 4.9.3)). Lorentzian extensions of affine Dynkin diagrams {\it Aff} are denoted {\it Aff} $^{\wedge}$ in our tables.

\medskip
\noindent {\bf Example - $E_{10}$:} Let $\Delta$ be the Dynkin diagram for $E_8$. We label the first vertex of the `long tail' by 1. Adding a vertex labeled 0 and connecting vertices 0 and 1 by a single edge yields the extended Dynkin diagram $\Delta'$ which corresponds to the affine Kac-Moody algebra $E_9=E_8^{(1)}$. Adding a further vertex labeled $-1$ and connecting vertices $-1$ and 0 by a single edge yields the overextended Dynkin diagram  which corresponds to the hyperbolic Kac-Moody algebra $E_{10}=E_8^{{(1)\wedge}}$. 

\medskip
\begin{figure}[h!]
\resizebox{2.23 in}{.5 in}{\includegraphics*[viewport=204 679 405 722 ]{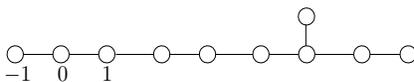}}
\caption{The Dynkin diagram for $E_{10}$.}
\end{figure}

\section{Notation and tables} \label{tables}

\medskip
\noindent We present below a comprehensive tabulation of all hyperbolic diagrams listed in [Sa], [KM] and [Li] with all errata from these tables corrected.  In particular we include the missing diagrams of [Sa] that were pointed out by [dBS]. We present a summary of the errata in the existing literature in Section 8.

\medskip
\noindent The diagrams in our tables generally follow Li's ordering and orientation of edges. Symmetrizable diagrams follow Sa\c{c}lio\~{g}lu's labeling. In these cases, the orientation of edges used by Li is changed if necessary. 

\medskip
\noindent The Dynkin diagrams in our tables correspond to {\it isomorphism classes} of Kac-Moody algebras. Kac and Peterson have shown that  Kac-Moody algebras over the same field are isomorphic if and only if their generalized Cartan matrices can be obtained from each other by reordering the index set ([KP]). This induces an automorphism of a Dynkin diagram and hence does not change the isomorphism class of the Kac-Moody algebra.

\medskip
\noindent Given a Dynkin diagram $\mathcal{D}$, the {\it dual Dynkin diagram} $\mathcal{D}^{dual}$ is obtained from $\mathcal{D}$ by  changing the directions of the arrows. This corresponds to taking the reciprocal of the ratio of the corresponding root lengths. If a Dynkin diagram $\mathcal{D}$ is self-dual, that is, $\mathcal{D}=\mathcal{D}^{dual}$, then the Kac-Moody algebras corresponding to $\mathcal{D}$ and $\mathcal{D}^{dual}$ are isomorphic. Dual diagrams which give rise to a different isomorphism class of Kac-Moody algebras appear explicitly. If $\mathcal{D}\neq \mathcal{D}^{dual}$ then $\mathcal{D}^{dual}$ immediately follows $\mathcal{D}$ in the tables.  In these cases, we depart from the ordering used by Li.

\medskip
\noindent The first column of the table represents an enumeration index for the diagrams. The second column `Other Notation' lists the common notations in use in the literature for the diagram $\mathcal{D}$. The first in the list is Li's notation which is of the form $H^{(rank(\mathcal{D}))}_n$,  where $n$ is an enumeration index. The diagrams are arranged by rank.

\medskip
\noindent  In rank 3 where relevant, we also use the notation $I\mathfrak{g}(a,b)$ which corresponds to the generalized Cartan matrix
$$\left(\begin{matrix}
{2} & {-b} & {0} & {\dots} & 0  \\
{-a} & {} & {} & {} & {} \\
{0} & {} & {C(\mathfrak{g})} & {} & {}  \\
{\vdots} & {} & {} & {} & {} \\
0 & {} & {}  & {} & {} \\
\end{matrix}
\right)$$
where $C(\mathfrak{g})$ is the Cartan matrix of a  Lie algebra or Kac-Moody algebra $\mathfrak{g}$. For many positive integer values of $a$ and $b$, $I\mathfrak{g}(a,b)$ is a generalized Cartan matrix of indefinite type. We use the standard finite or affine notation for $C(\mathfrak{g})$, and we refer the reader to [K] for tables and notation of   finite or affine type. Built into the notation $I\mathfrak{g}(a,b)$ is an assumption that the first vertex of the Dynkin diagram is connected to the second vertex (ordered left to right) and not to any other vertex. Thus in many cases we do not list the $I\mathfrak{g}(a,b)$ notation since it corresponds to a different ordering than our chosen ordering of vertices.

\medskip
\noindent  For Dynkin diagrams of noncompact hyperbolic type, the third index represents Kac's notation $AE_n$, $BE_n$, $CE_n$, $DE_n$, $T(p,q,r)$ where appropriate. Where relevant, the fourth index is of the form {\it Aff} $^{\wedge}$, where  `{\it Aff}' is a Dynkin diagram of affine type and ` ${}^{\wedge}$ ' represents an extension of {\it Aff} by adding one vertex and a single edge.  As in the previous section, we follow the convention of extending at the vertex labelled 0, though this labeling does not appear in the tables. The notation $H${\it Aff} is used in some papers in place of {\it Aff} $^{\wedge}$, and the notation {\it Aff} $'{}^{\wedge}$ is used to denote the dual of {\it Aff} $^{\wedge}$.

\medskip
\noindent The third column contains the Dynkin diagrams $\mathcal{D}$. If $\mathcal{D}$ is symmetric or symmetrizable, we use Sa\c{c}lio\~{g}lu's labelling of the vertices. The fourth column contains the diagonal matrix $diag(d_1,\dots , d_{\ell})$  in the case that $\mathcal{D}$ is symmetric or symmetrizable.  In this case the diagonal matrix $diag(d_1,\dots , d_{\ell})$ gives a symmetrization of the generalized Cartan matrix $A$ corresponding to $\mathcal{D}$. The $d_i$ have been rescaled to take integer values. If all $d_i$ equal 1, then 
$\mathcal{D}$ is symmetric. The maximal number of possible root lengths in the  root system corresponding to $\mathcal{D}$ is  the number of distinct $d_i$.  In the case that $\mathcal{D}$ is not symmetric or symmetrizable, the fourth column entry is `N.S.' for `not symmetrizable'.

\medskip
\noindent The fifth column contains the disjoint Weyl group orbits corresponding to the indexing of vertices in the Dynkin diagram.  If the Dynkin diagram is not symmetrizable, the fifth column entry is left blank.

\newpage

\begin{table}[h!]
  \caption{Rank 3 compact diagrams}
  \centering
    \resizebox{6.5in}{6.5 in }{\includegraphics*[viewport=35 35 720 720 ]{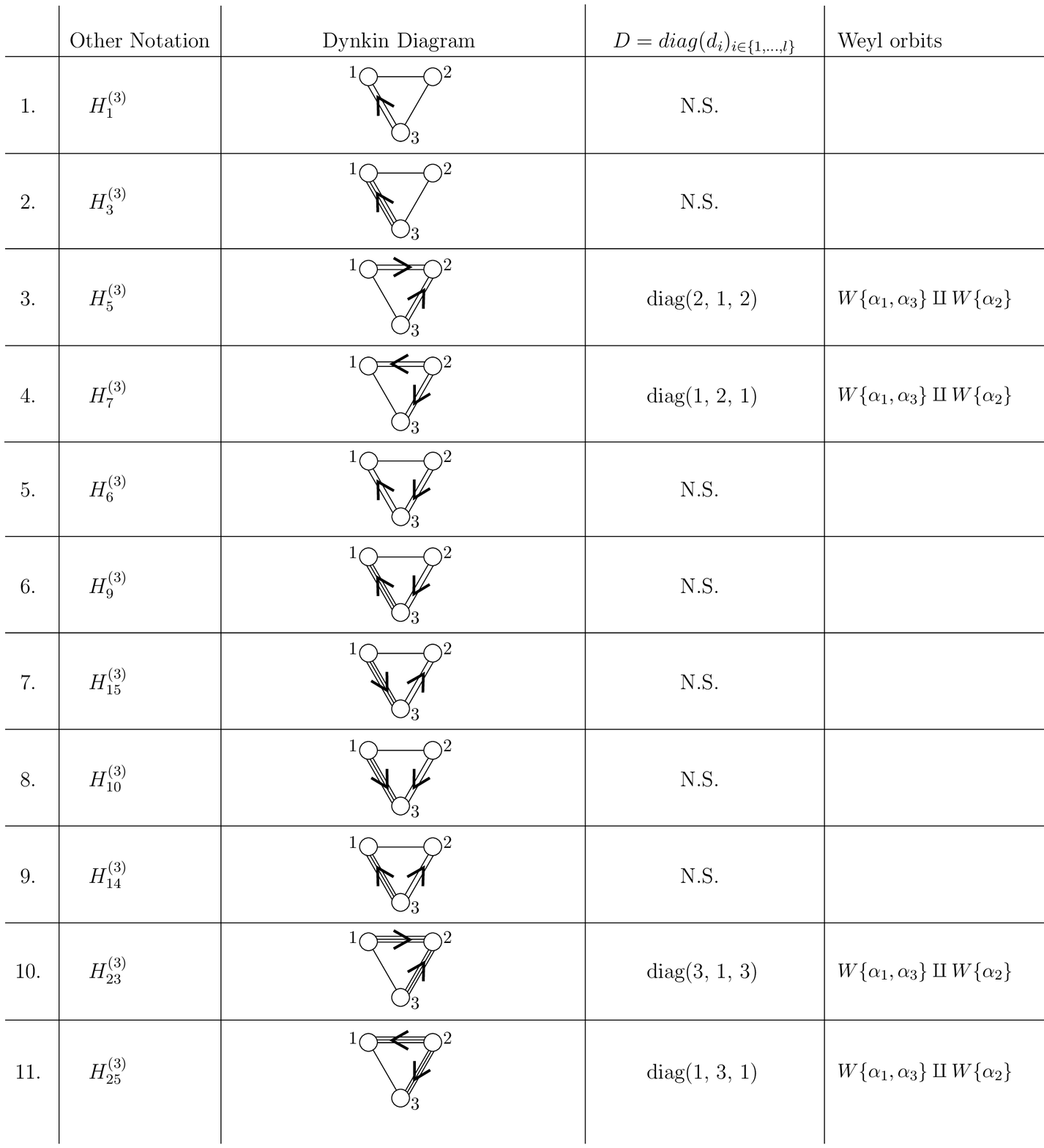}}
\end{table}

\newpage

\begin{table}[h!]
  \caption{Rank 3 compact diagrams (continued)}
  \centering
\resizebox{6.5in}{6.5 in }{\includegraphics*[viewport=35 35 720 720 ]{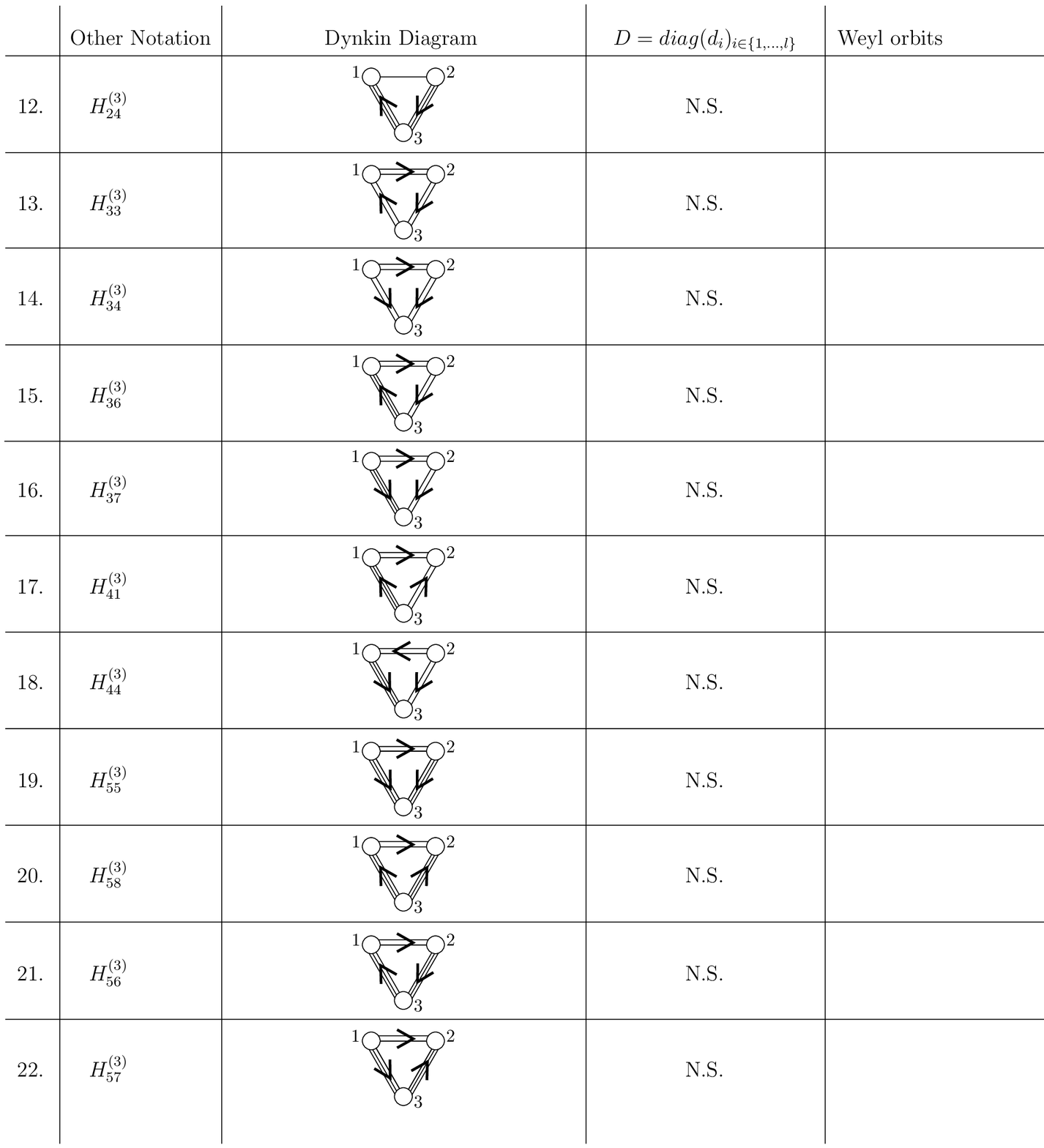}}
\end{table}

\newpage

\begin{table}[h!]
  \caption{Rank 3 compact diagrams (continued)}
  \centering
\resizebox{6.5in}{6.5 in }{\includegraphics*[viewport=35 35 720 720 ]{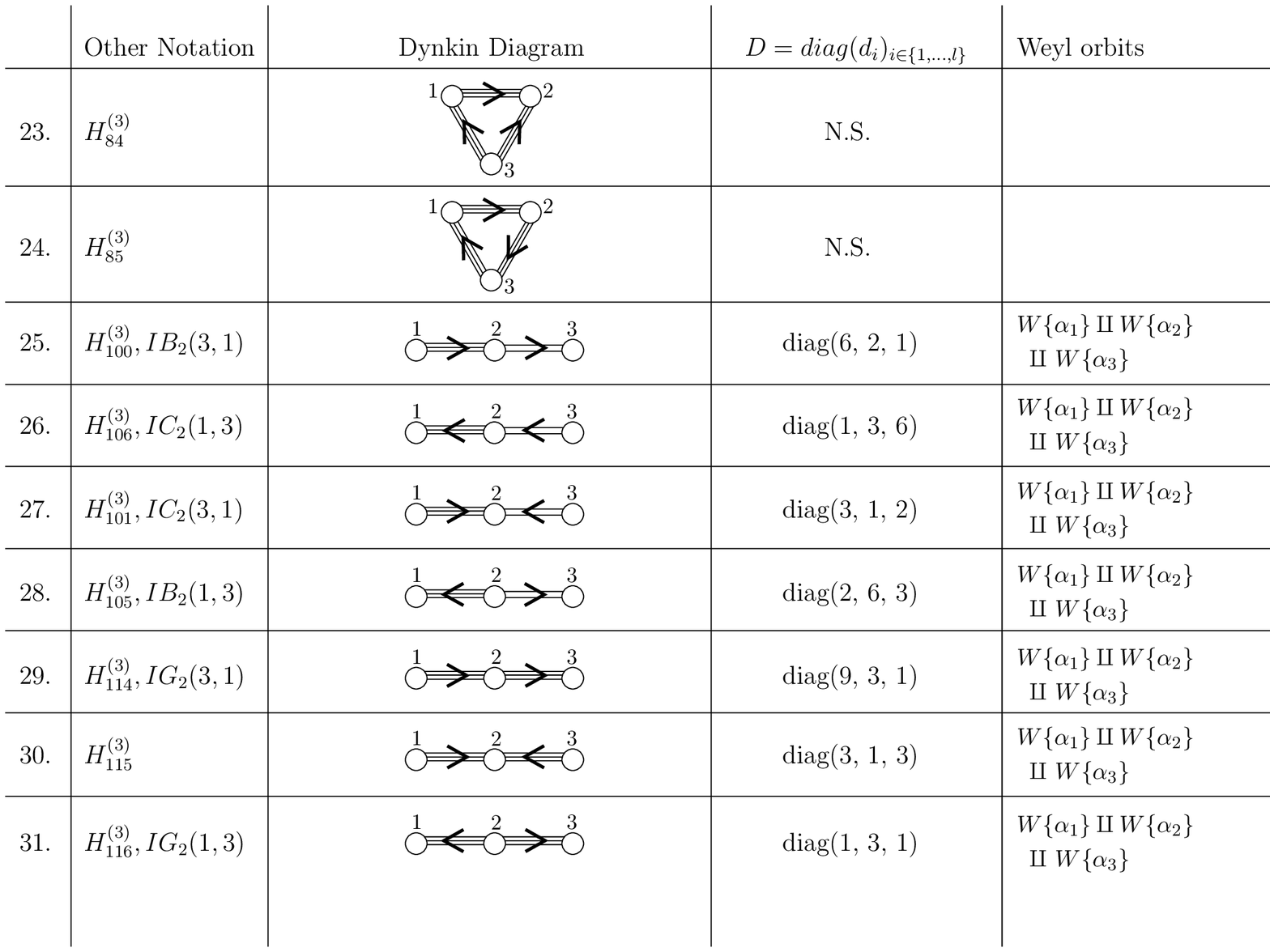}}
\end{table}

\newpage

\begin{table}[h!]
  \caption{Rank 3 non-compact diagrams}
  \centering
\resizebox{6.5in}{6.5 in }{\includegraphics*[viewport=35 35 720 720 ]{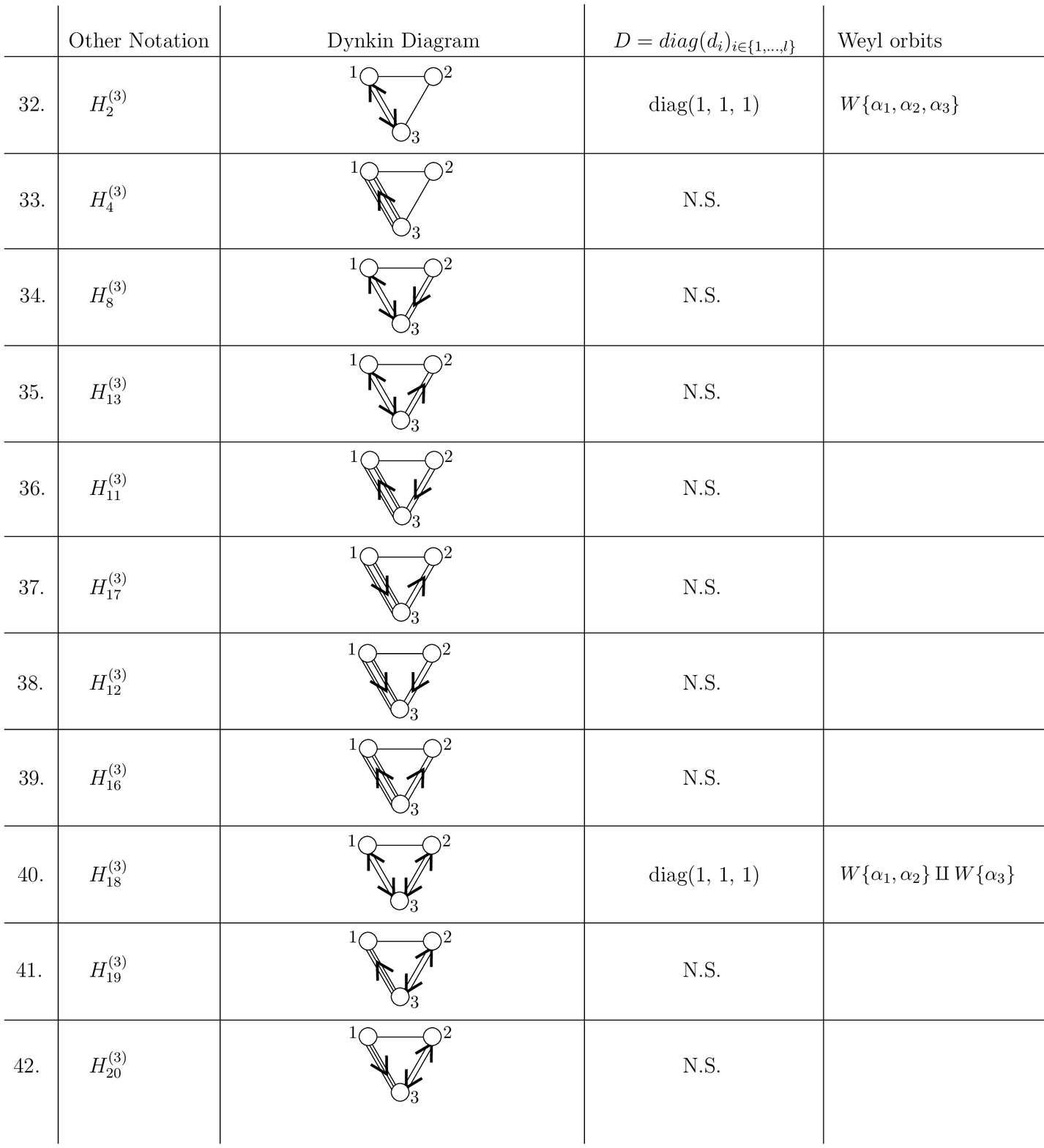}}
\end{table}

\newpage

\begin{table}[h!]
  \caption{Rank 3 non-compact diagrams (continued)}
  \centering
\resizebox{6.5in}{6.5 in }{\includegraphics*[viewport=35 35 720 720 ]{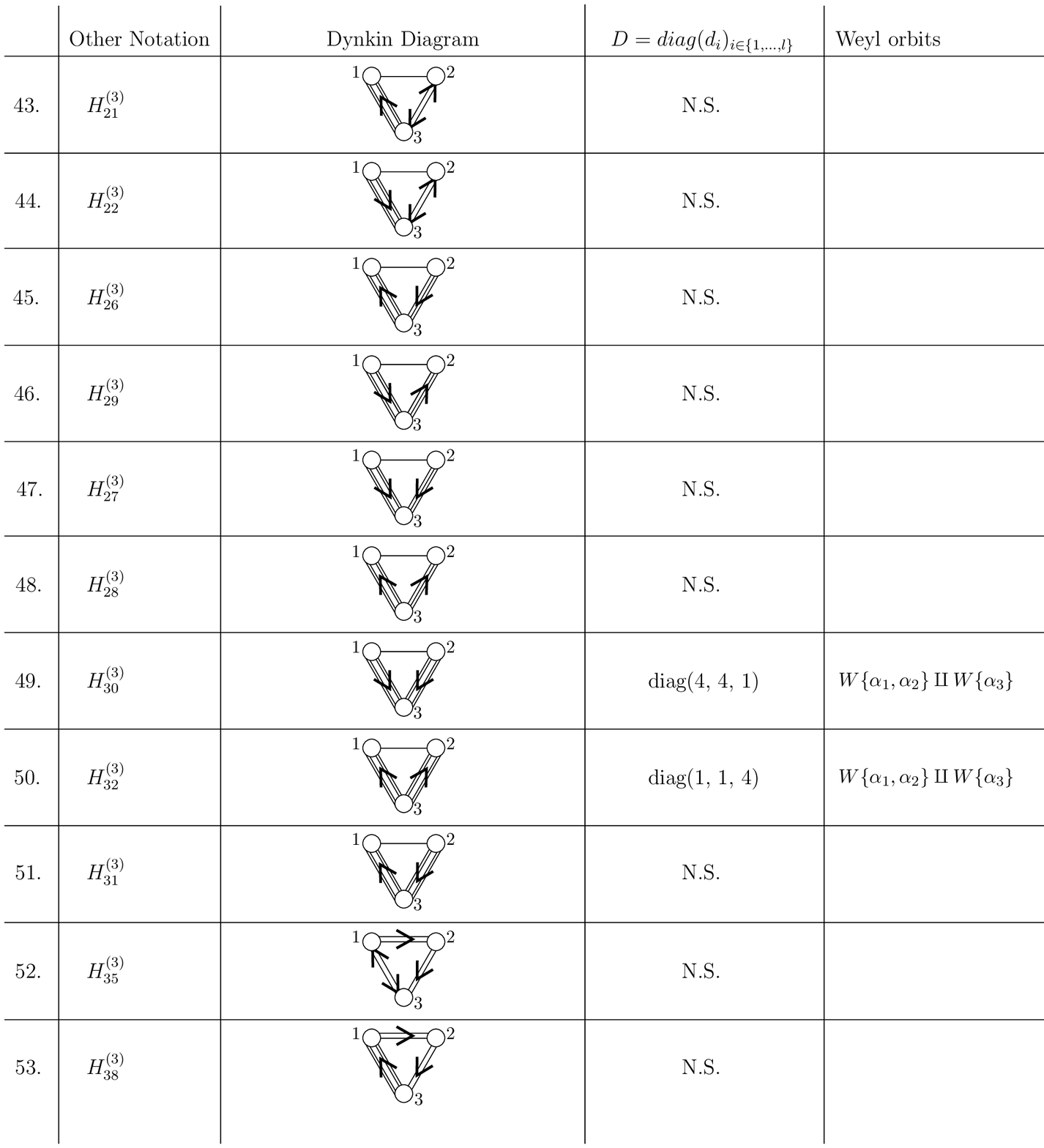}}
\end{table}

\newpage

\begin{table}[h!]
  \caption{Rank 3 non-compact diagrams (continued)}
  \centering
\resizebox{6.5 in}{6.5 in }{\includegraphics*[viewport=30 30 720 720 ]{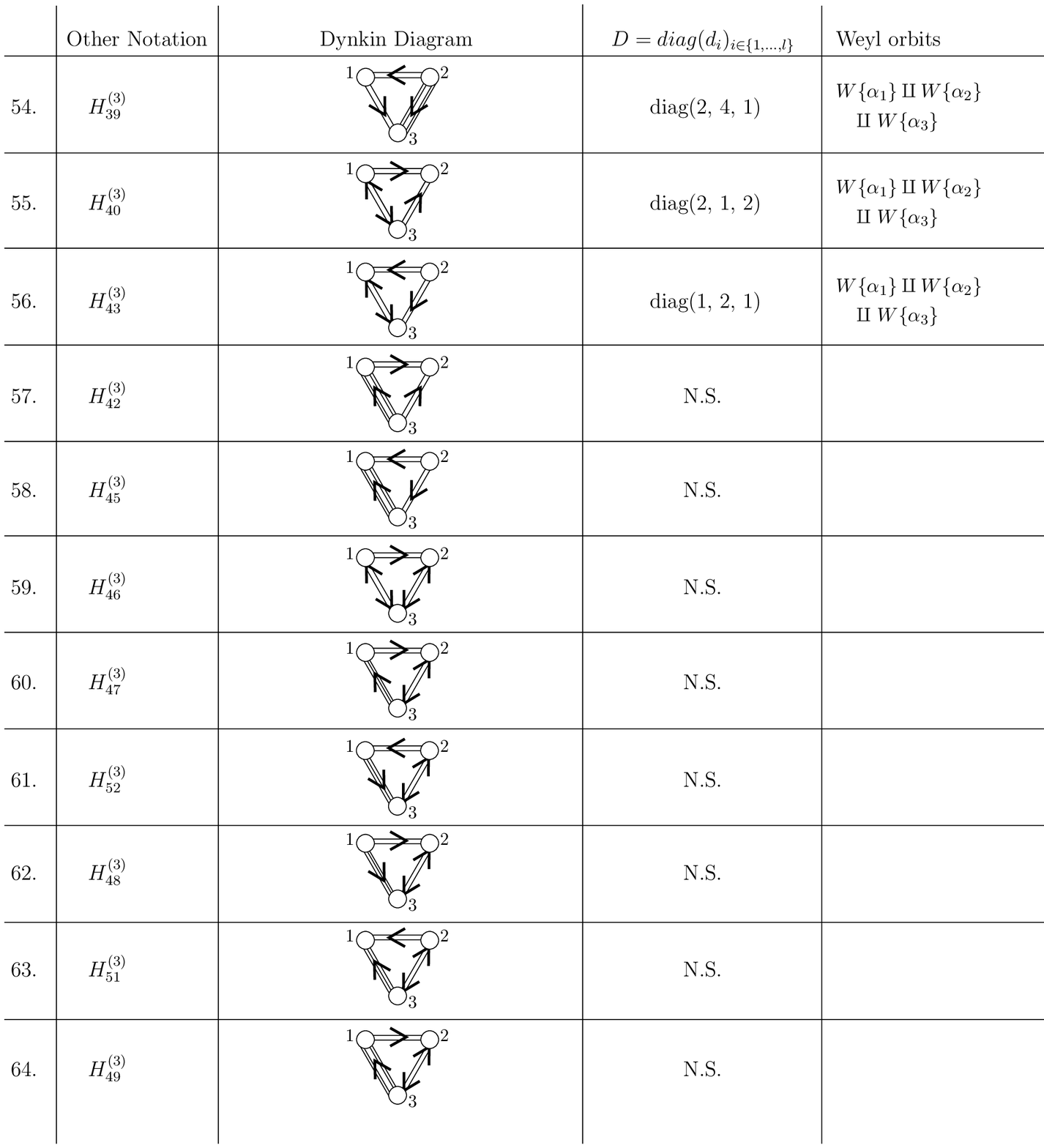}}
\end{table}

\newpage

\begin{table}[h!]
  \caption{Rank 3 non-compact diagrams (continued)}
  \centering
\resizebox{6.5in}{6.5 in }{\includegraphics*[viewport=35 35 720 720 ]{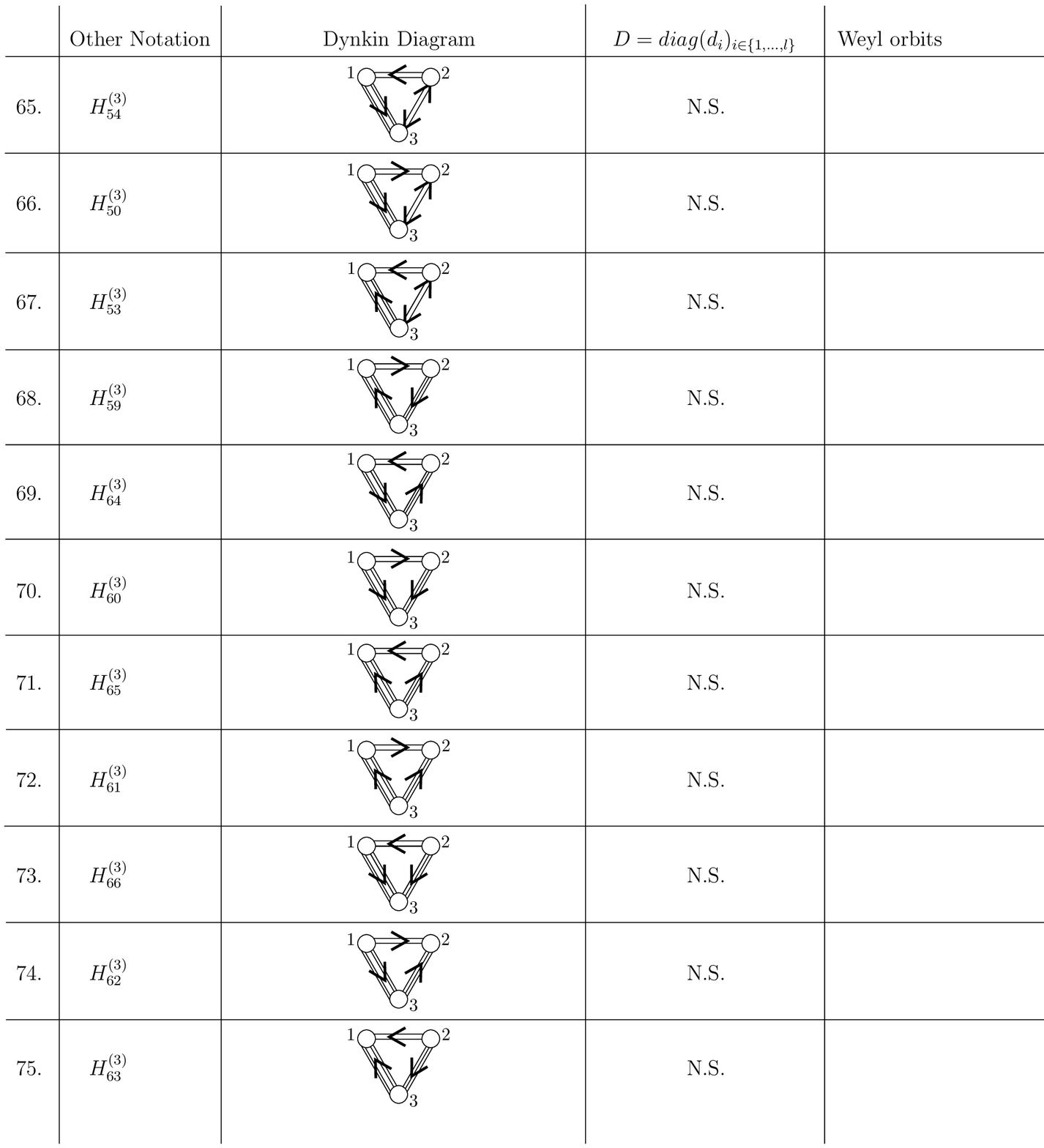}}
\end{table}

\newpage

\begin{table}[h!]
  \caption{Rank 3 non-compact diagrams (continued)}
  \centering
\resizebox{6.5in}{6.5 in }{\includegraphics*[viewport=35 35 720 720 ]{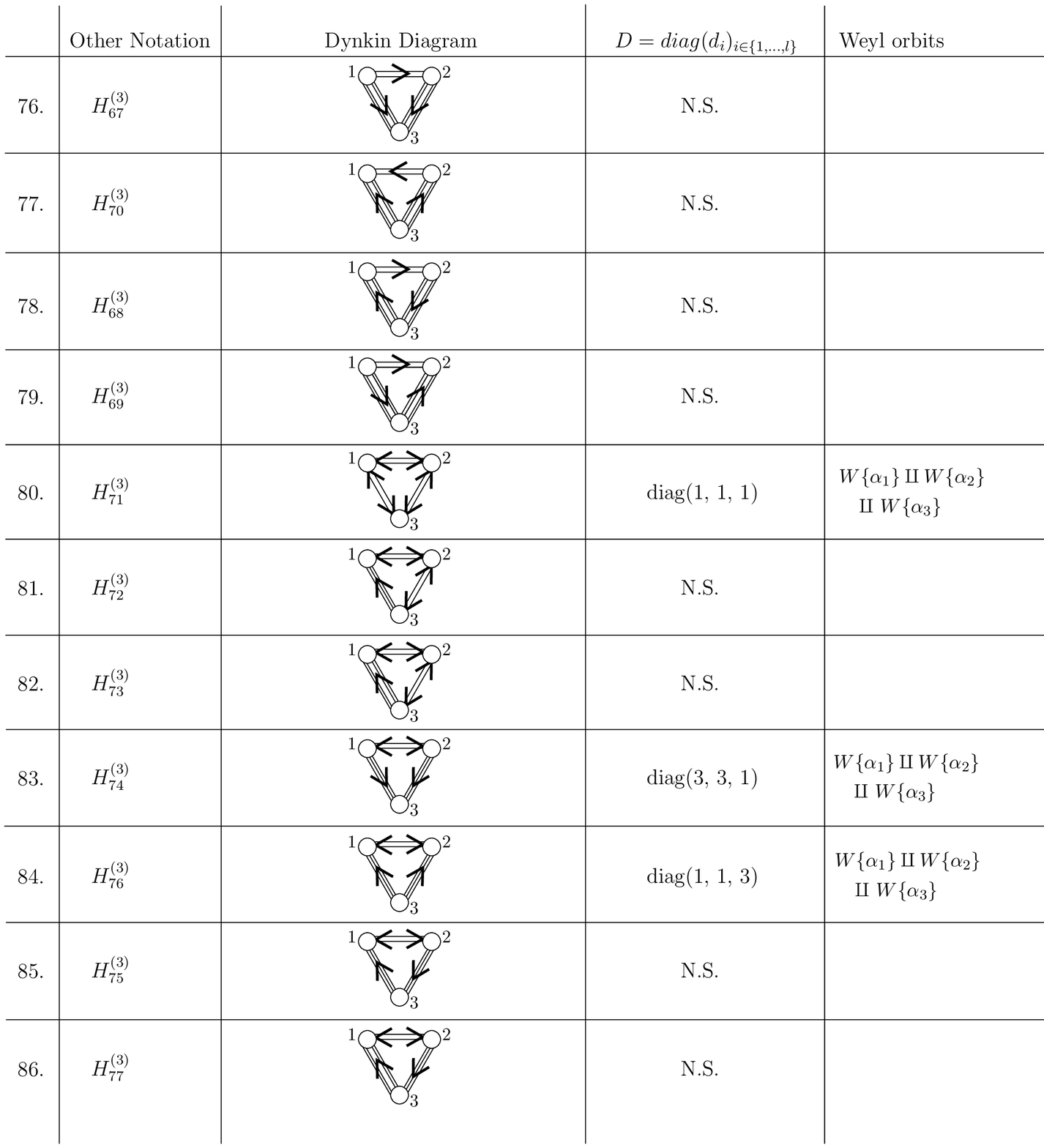}}
\end{table}

\newpage

\begin{table}[h!]
  \caption{Rank 3 non-compact diagrams (continued)}
  \centering
\resizebox{6.5in}{6.5 in }{\includegraphics*[viewport=35 35 720 720 ]{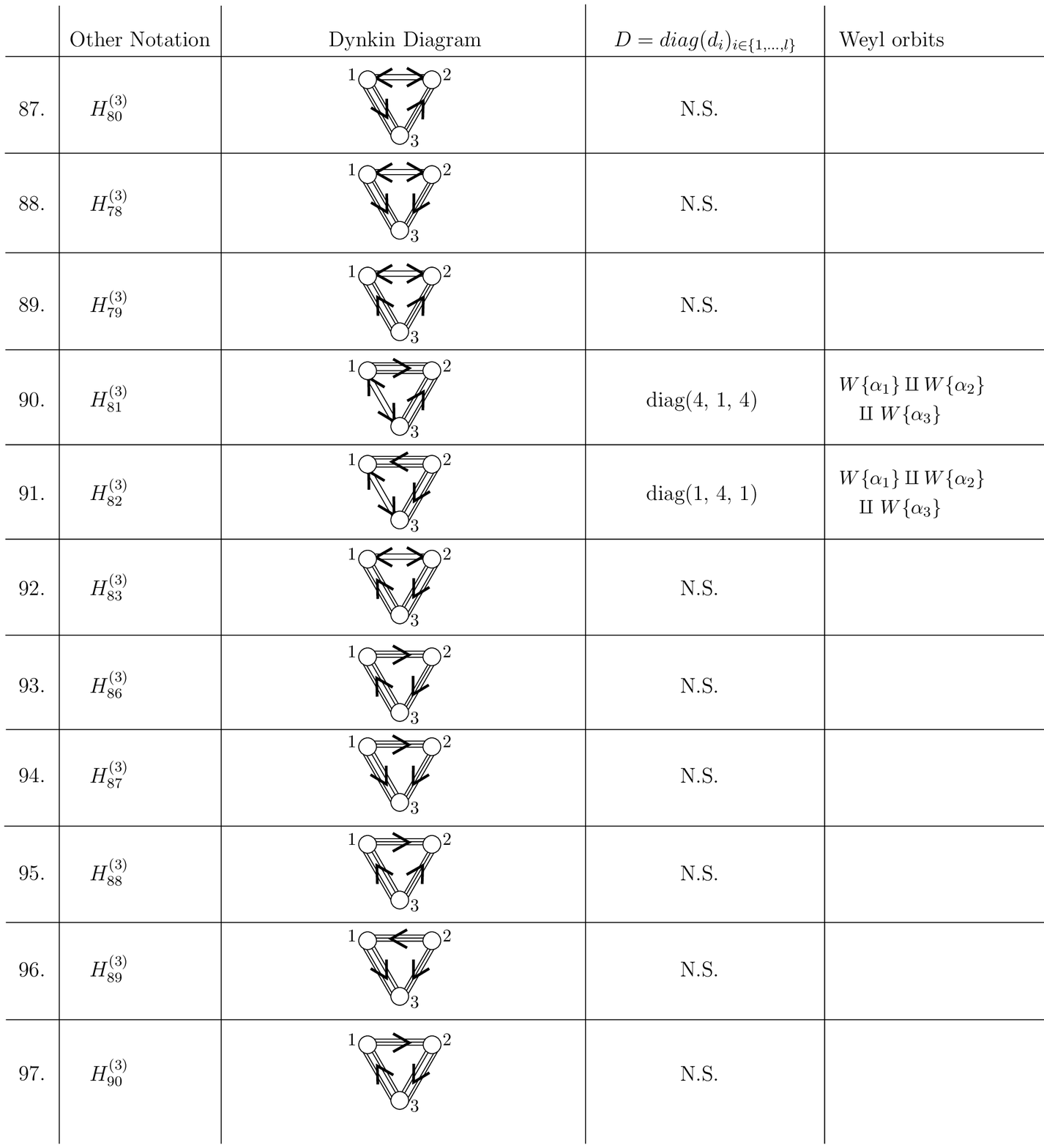}}
\end{table}

\newpage

\begin{table}[h!]
  \caption{Rank 3 non-compact diagrams (continued)}
  \centering
\resizebox{6.5in}{6.5 in }{\includegraphics*[viewport=30 30 720 720 ]{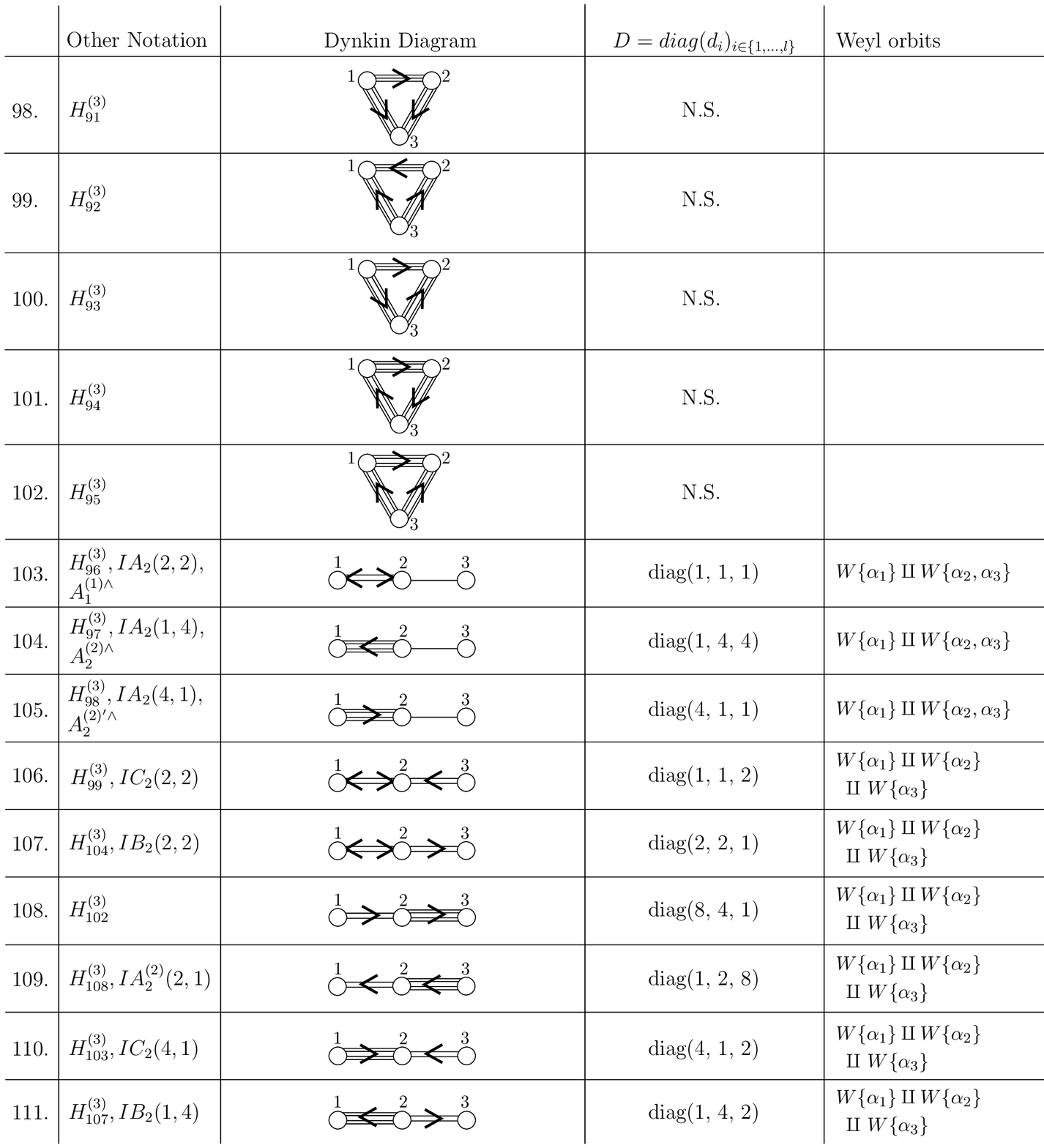}}
\end{table}

\newpage

\begin{table}[h!]
  \caption{Rank 3 non-compact diagrams (continued)}
  \centering
\resizebox{6.5in}{6.5 in }{\includegraphics*[viewport=35 35 720 720 ]{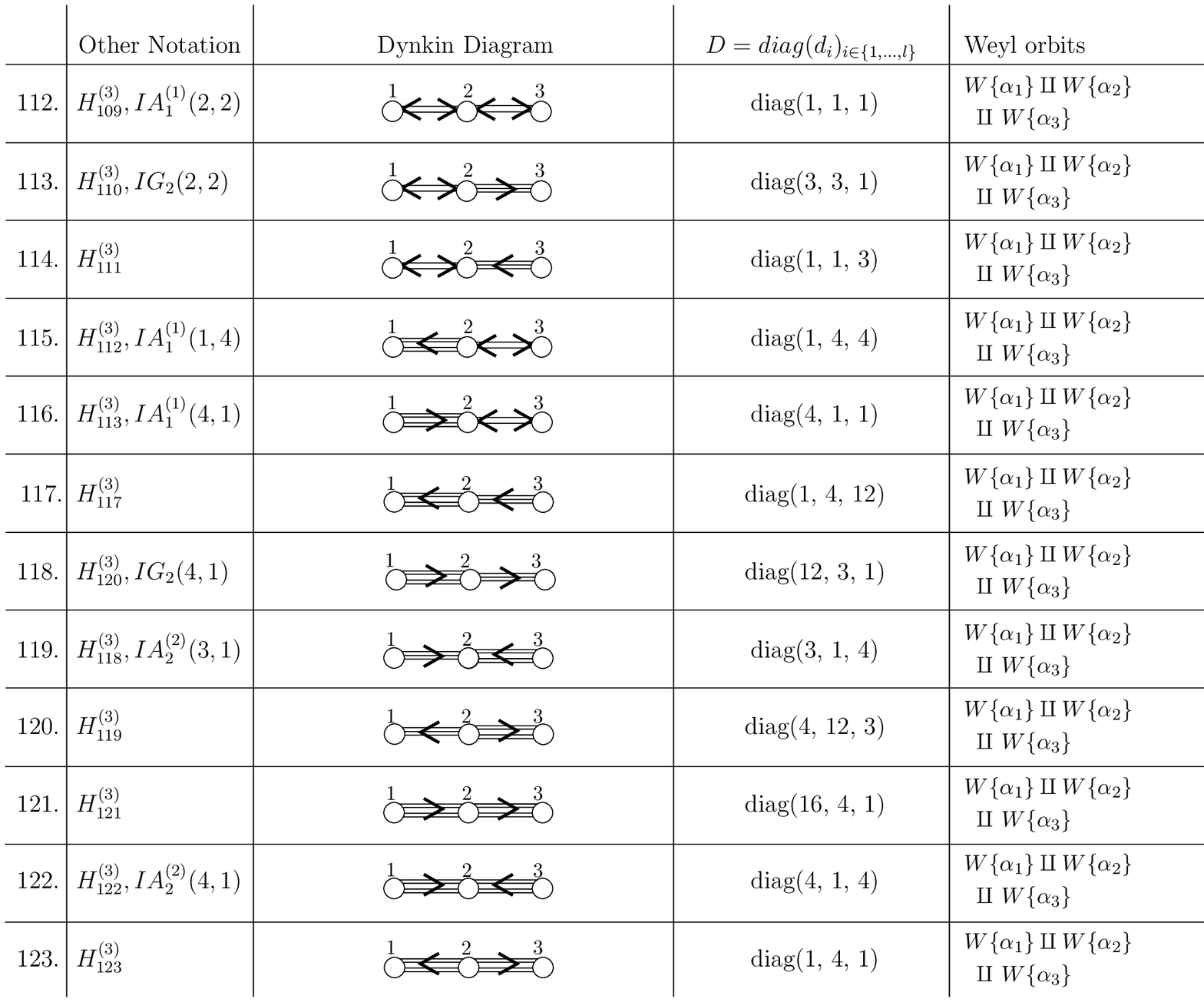}}
\end{table}

\newpage

\begin{table}[h!]
  \caption{Rank 4 diagrams}
  \vspace{-8.0 pt}
  \tiny{\it{(All diagrams are of non-compact type unless otherwise noted.)}}
  \centering
  
\vspace{12.0 pt}
\resizebox{6.5in}{6.5 in }{\includegraphics*[viewport=30 30 720 720 ]{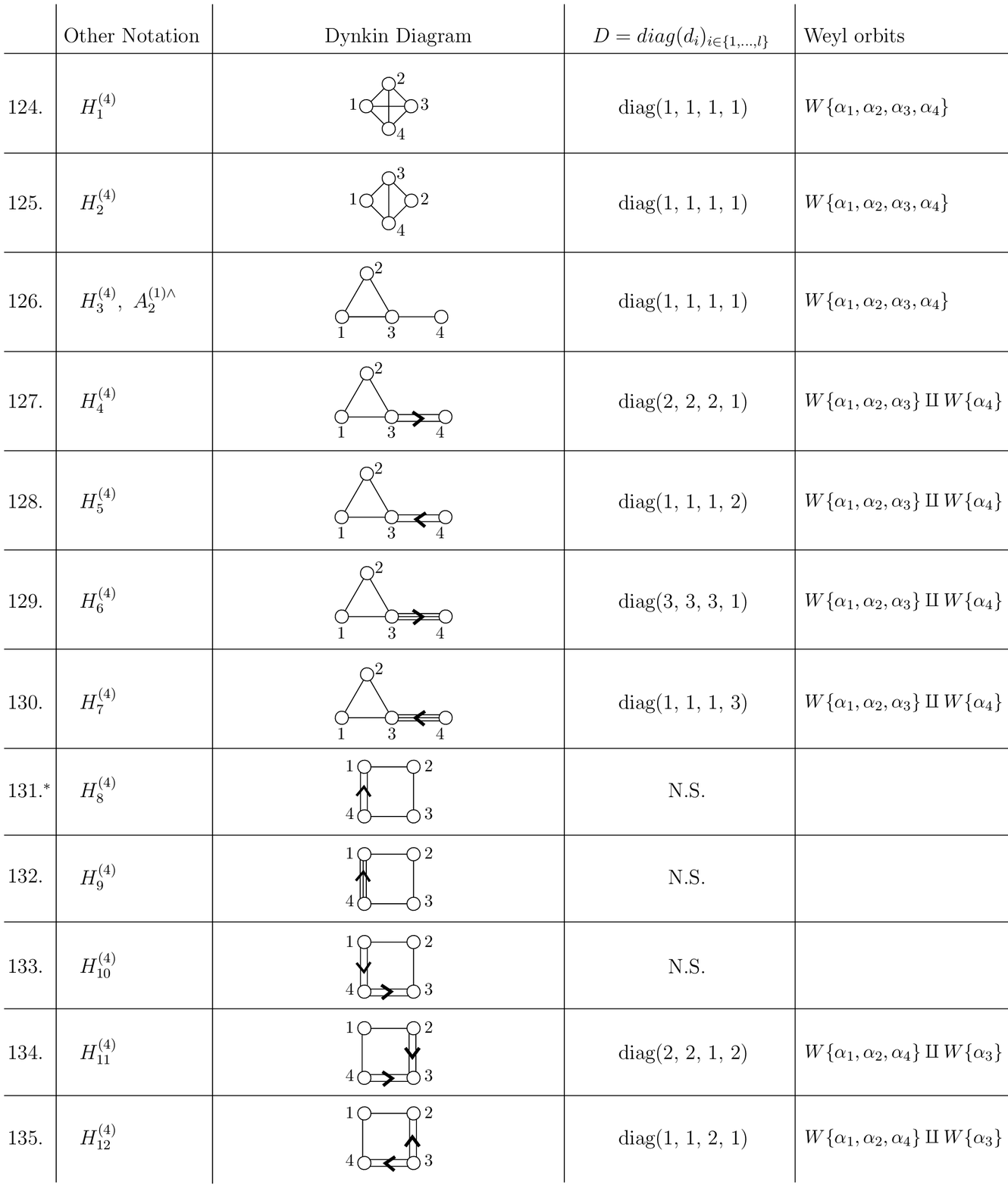}}
\end{table}
\textcolor{white}{\symbolfootnote[1]{ \ This diagram is compact.}} 
\newpage

\begin{table}[h!]
  \caption{Rank 4 diagrams (continued)}
  \vspace{-8.0 pt}
  \tiny{\it{(All diagrams are of non-compact type unless otherwise noted.)}}
  \centering
  
\vspace{12.0 pt}
\resizebox{6.5in}{6.5 in }{\includegraphics*[viewport=30 30 720 720 ]{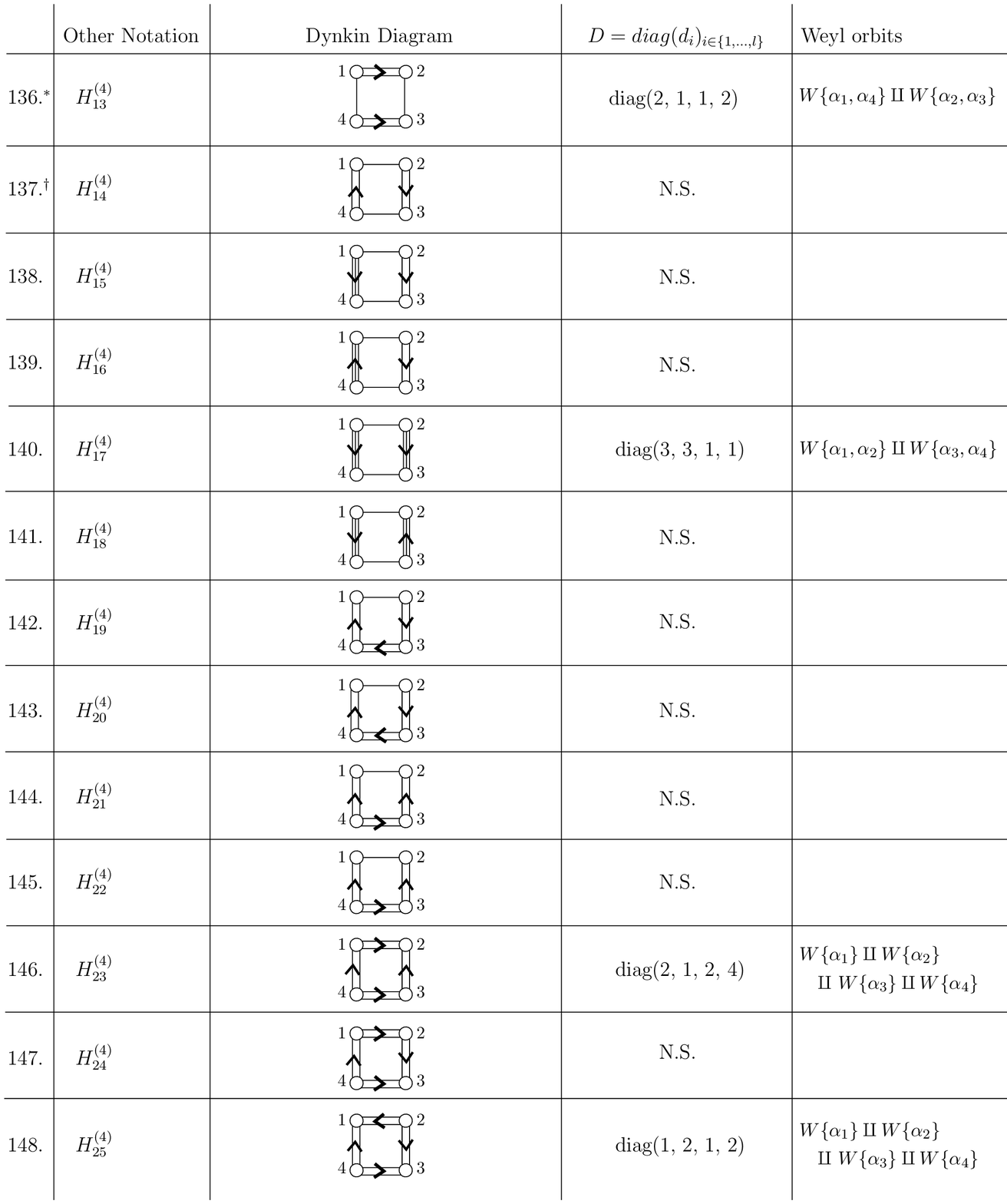}}
\end{table}
\textcolor{white}{\symbolfootnote[1]{$^{, \dagger}$ \ These diagrams are compact.}} 

\newpage

\begin{table}[h!]
  \caption{Rank 4 diagrams (continued)}
  \vspace{-8.0 pt}
  \tiny{\it{(All diagrams are of non-compact type unless otherwise noted.)}}
  \centering
  
\vspace{12.0 pt}
\resizebox{6.5in}{6.5 in }{\includegraphics*[viewport=35 35 720 720 ]{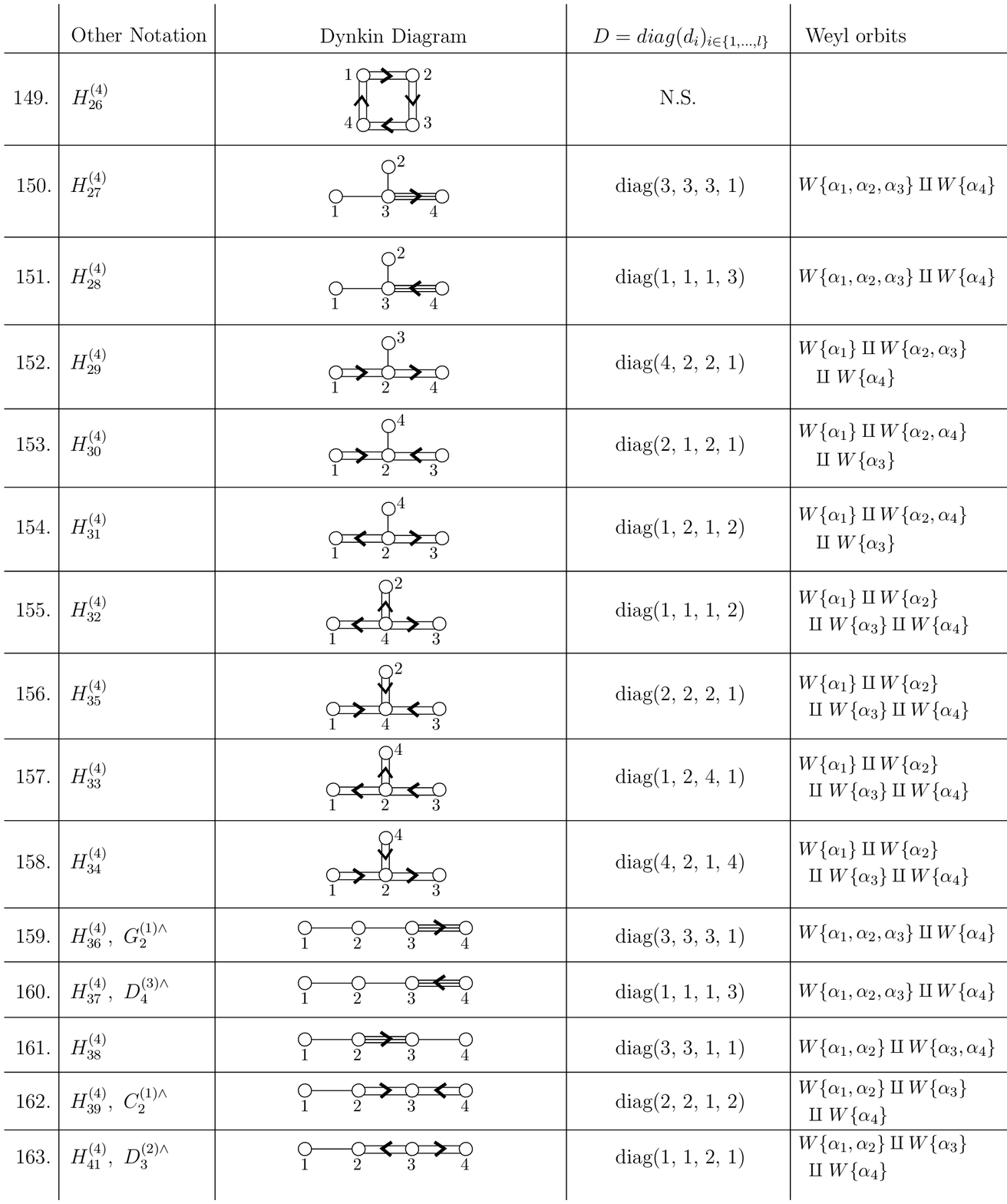}}
\end{table}

\newpage

\begin{table}[h!]
  \caption{Rank 4 diagrams (continued)}
  \vspace{-8.0 pt}
  \tiny{\it{(All diagrams are of non-compact type unless otherwise noted.)}}
  \centering
  
\vspace{12.0 pt}
\resizebox{6.5in}{6.5 in }{\includegraphics*[viewport=35 35 720 720 ]{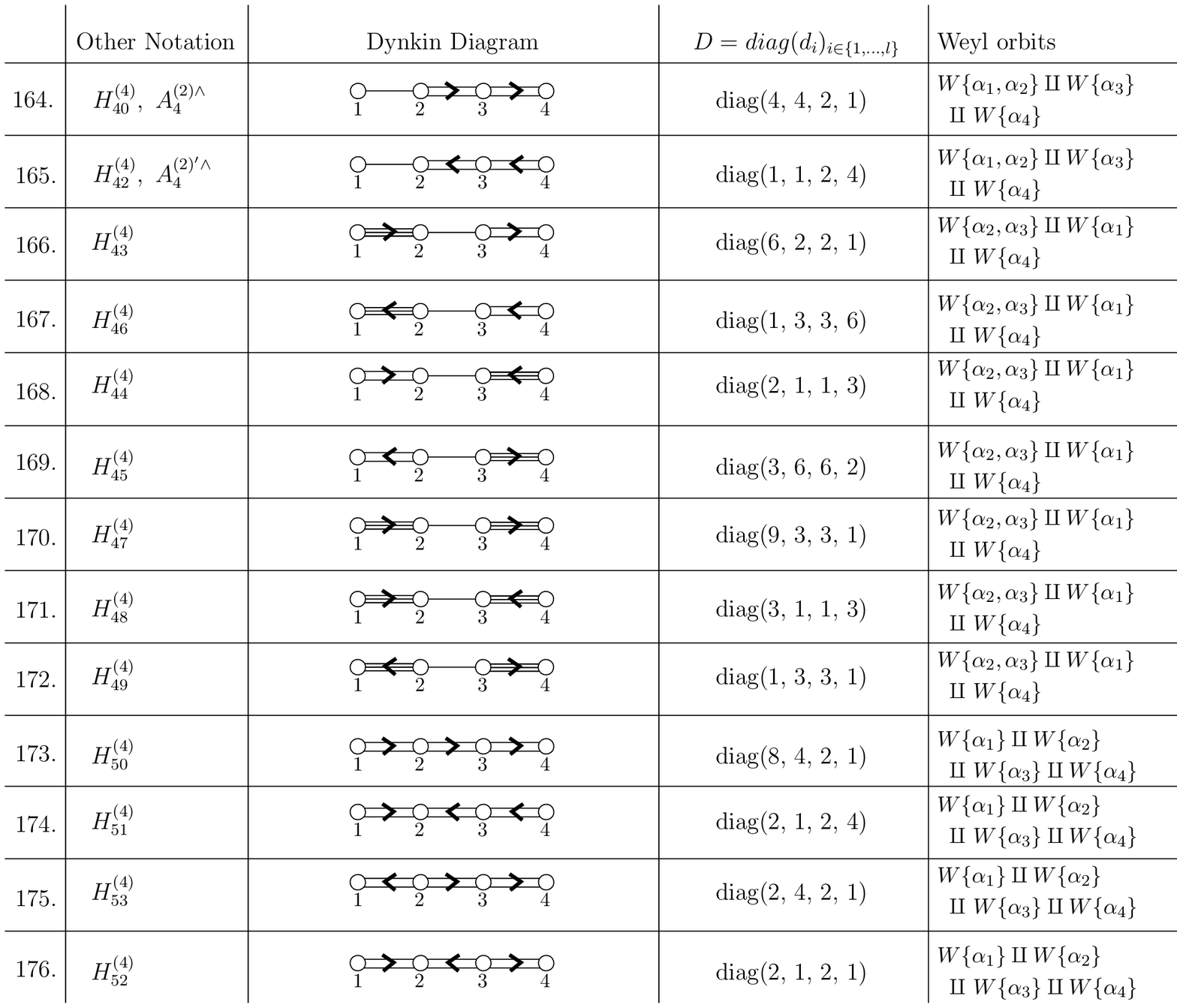}}
\end{table}

\newpage

\begin{table}[h!]
  \caption{Rank 5 diagrams}
  \vspace{-8.0 pt}
  \tiny{\it{(All diagrams are of non-compact type unless otherwise noted.)}}
  \centering
  
\vspace{12.0 pt}
\resizebox{6.5in}{6.5 in }{\includegraphics*[viewport=30 30 720 720 ]{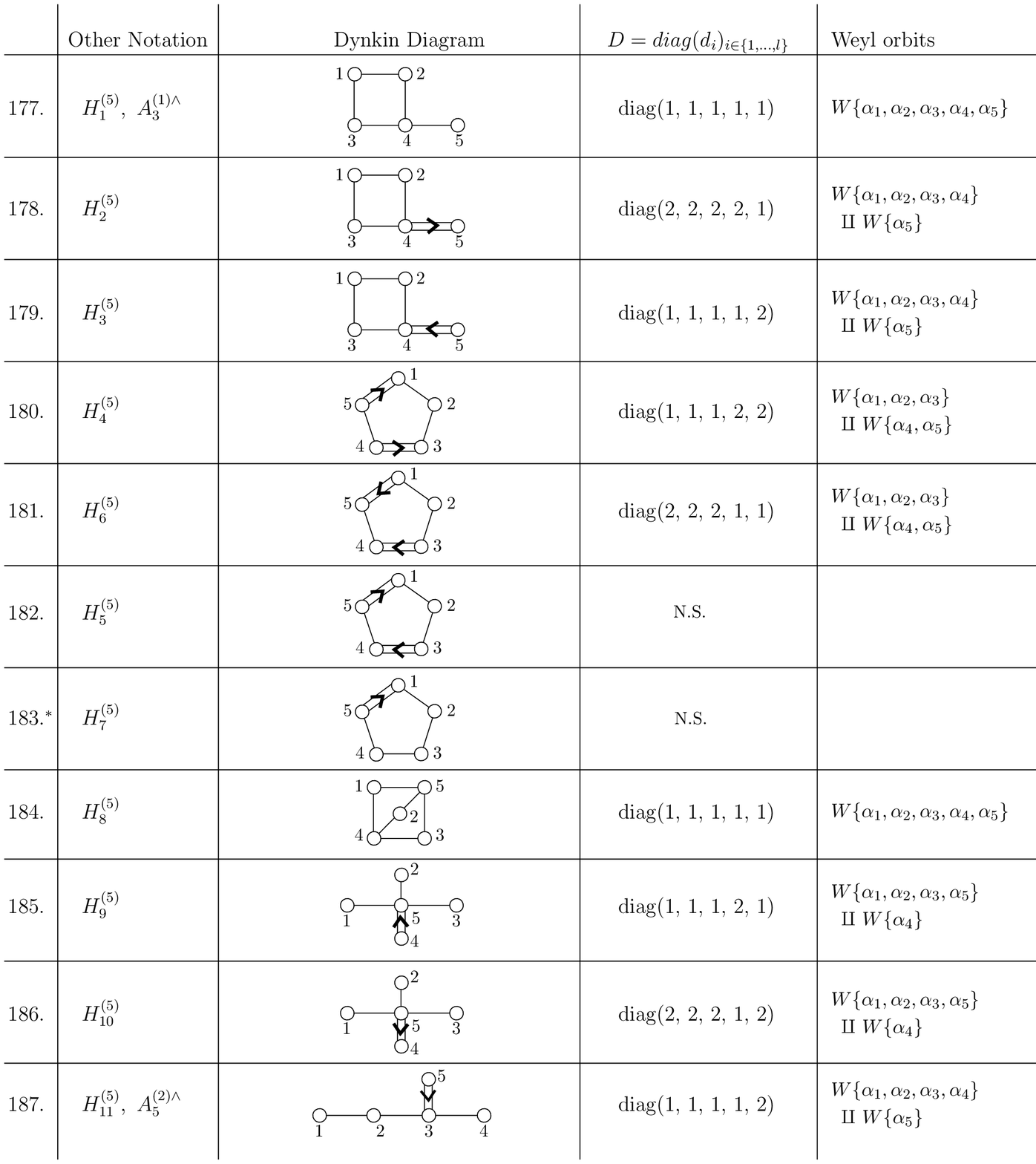}}
\end{table}
\textcolor{white}{\symbolfootnote[1]{ \ This diagram is compact.}}

\newpage

\begin{table}[h!]
  \caption{Rank 5 diagrams (continued)}
  \vspace{-8.0 pt}
  \tiny{\it{(All diagrams are of non-compact type unless otherwise noted.)}}
  \centering
  
\vspace{12.0 pt}
\resizebox{6.5in}{6.5 in }{\includegraphics*[viewport=35 35 720 720 ]{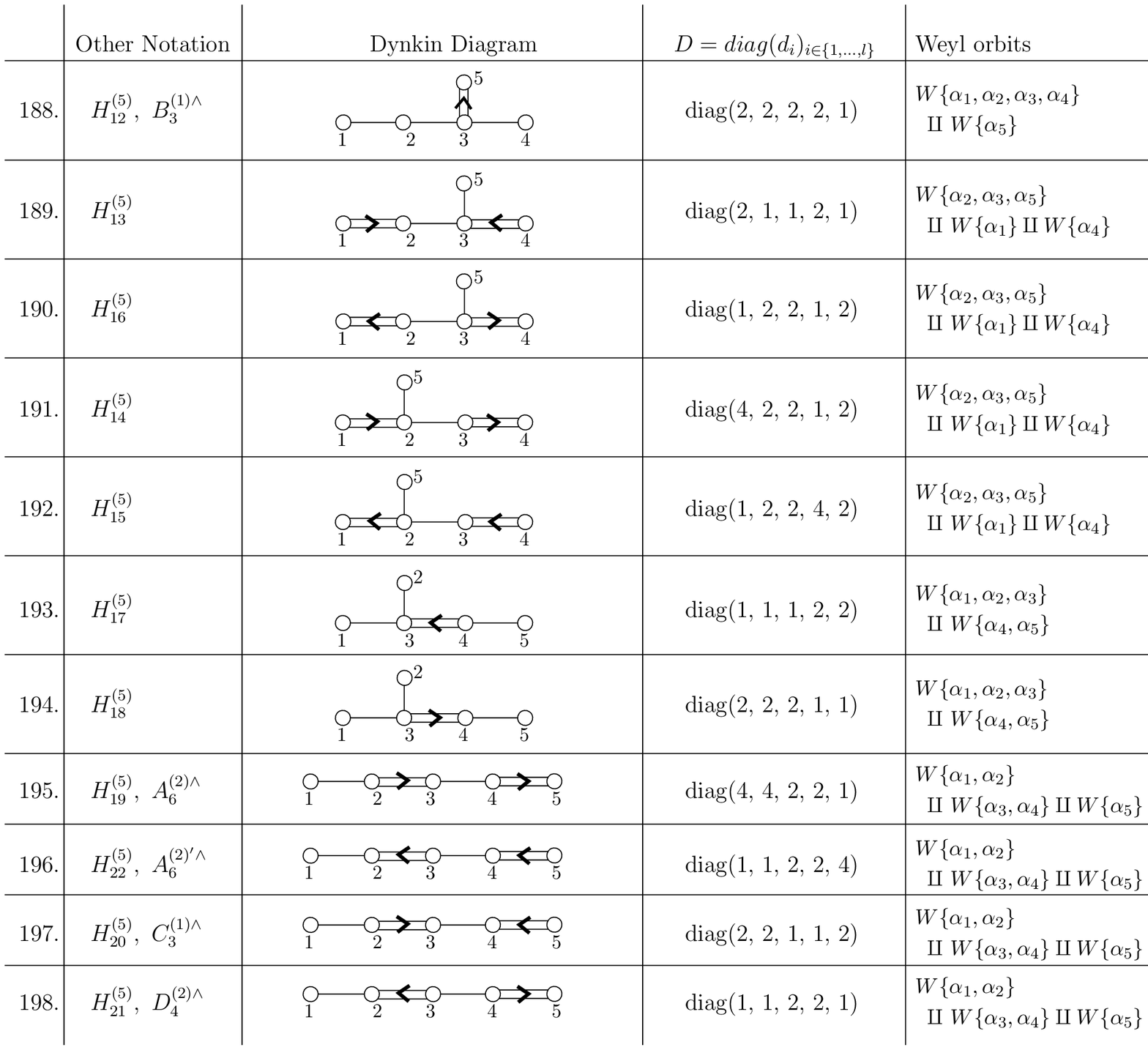}}
\end{table}

\newpage

\begin{table}[h!]
  \caption{Rank 6 diagrams}
  \centering
\resizebox{6.5in}{6.5 in }{\includegraphics*[viewport=35 35 720 720 ]{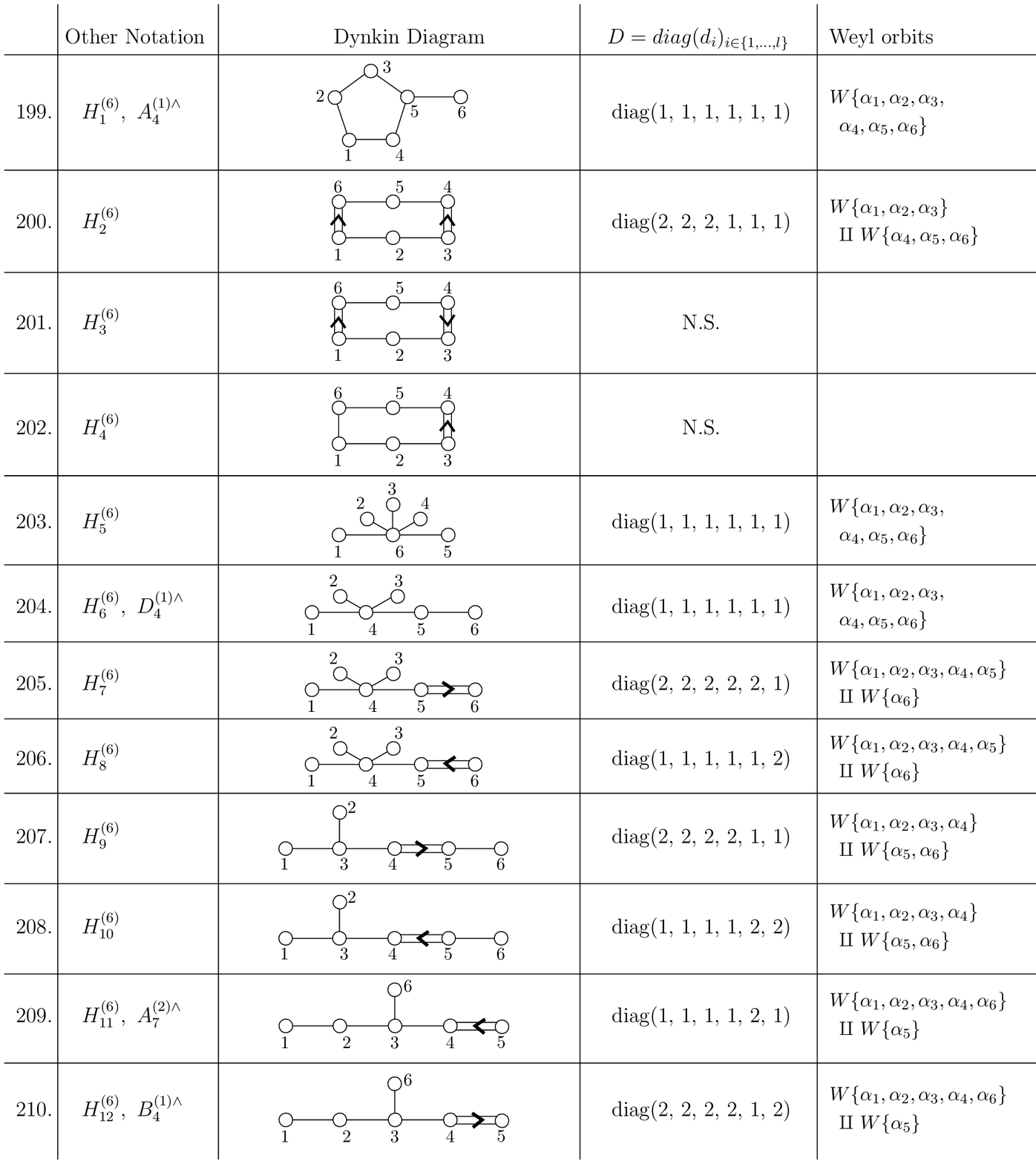}}
\end{table}

\newpage

\begin{table}[h!]
  \caption{Rank 6 diagrams (continued)}
  \centering
\resizebox{6.5 in}{4.11 in }{\includegraphics*[viewport=35 287 720 720 ]{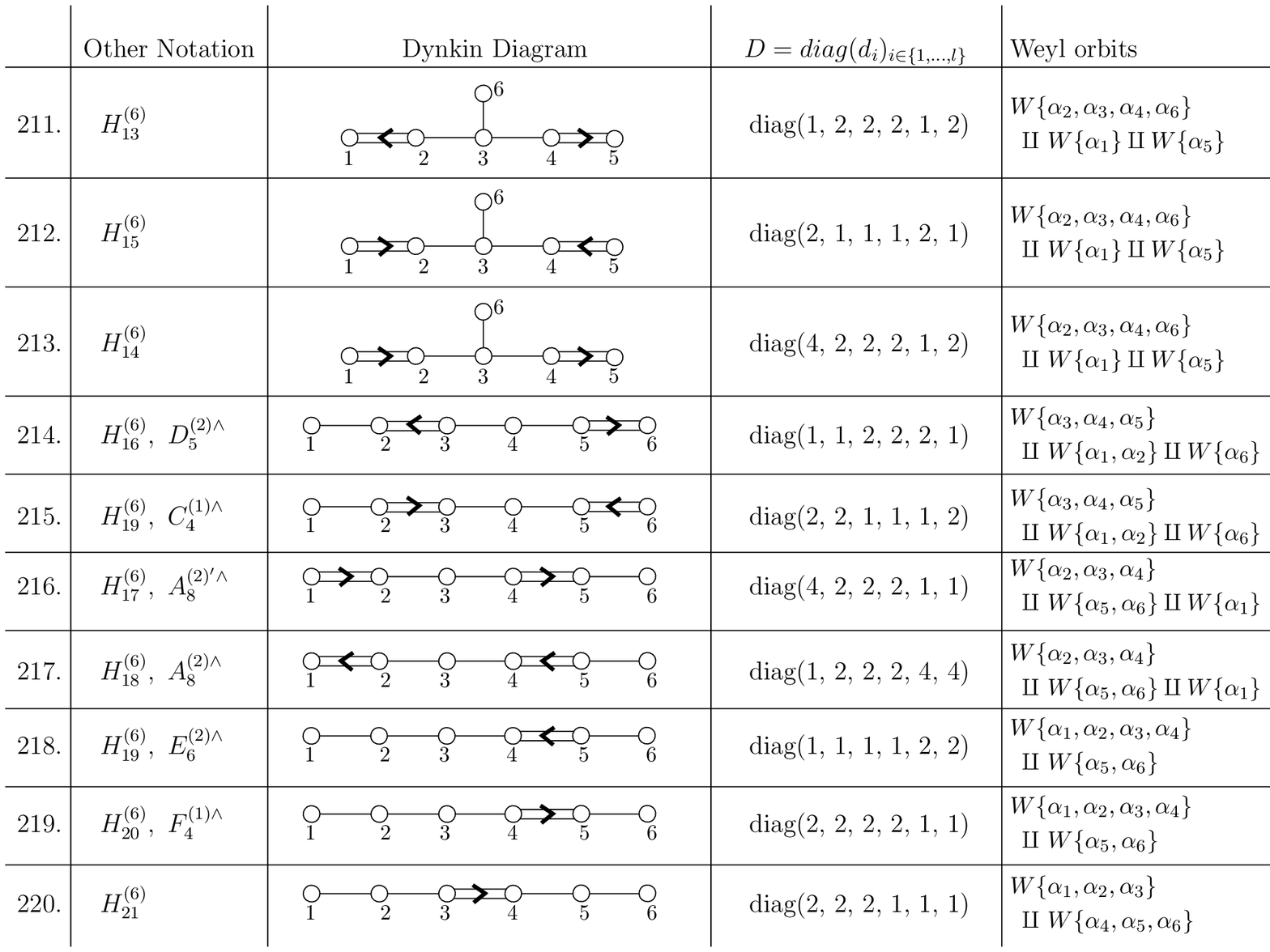}}
\end{table}

\newpage

\begin{table}[h!]
  \caption{Rank 7 diagrams}
  \centering
\resizebox{6.5 in}{2.5 in }{\includegraphics*[viewport=35 460 720 720 ]{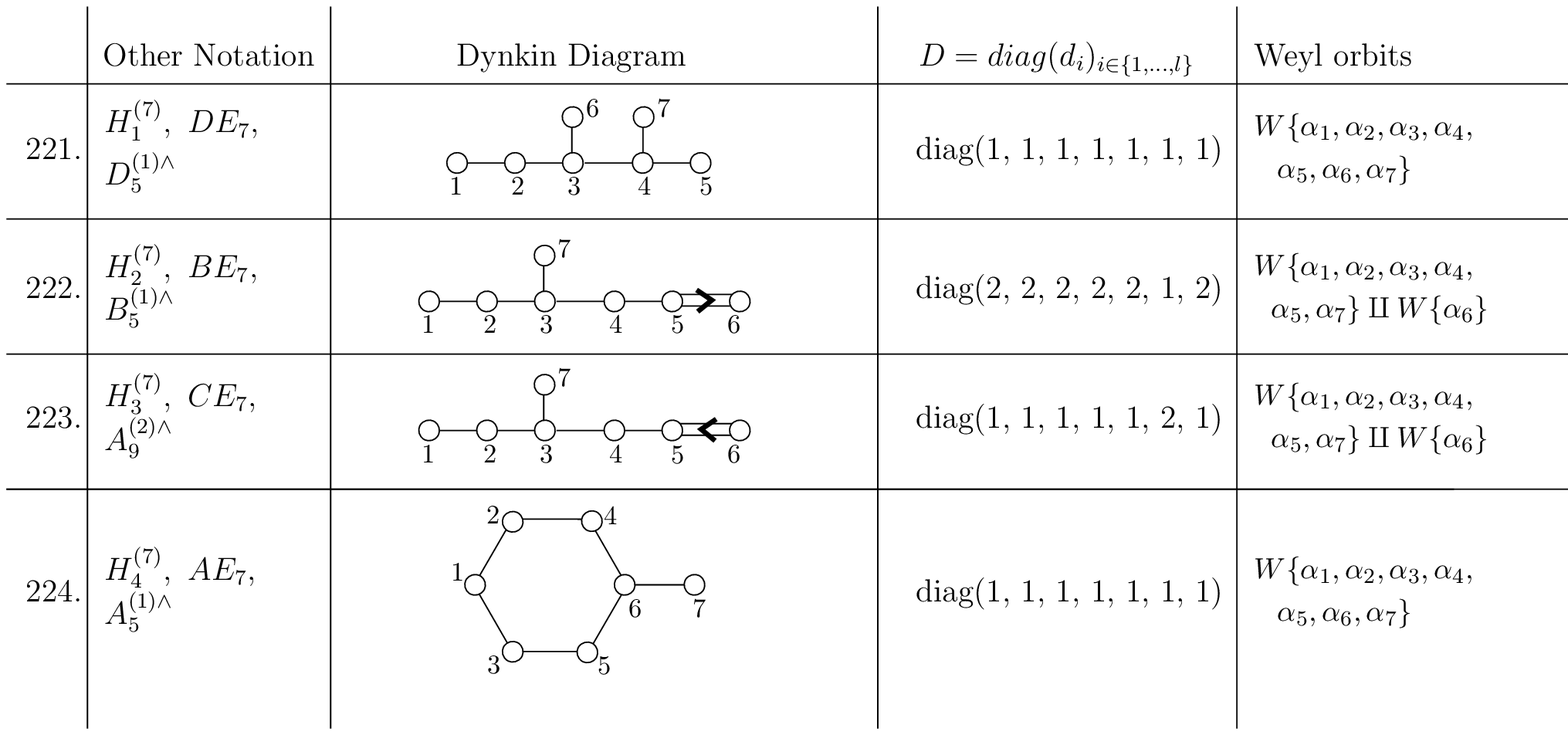}}
\end{table}

\begin{table}[h!]
  \caption{Rank 8 diagrams}
  \centering
\resizebox{6.5in}{3.5 in }{\includegraphics*[viewport=35 360 720 720 ]{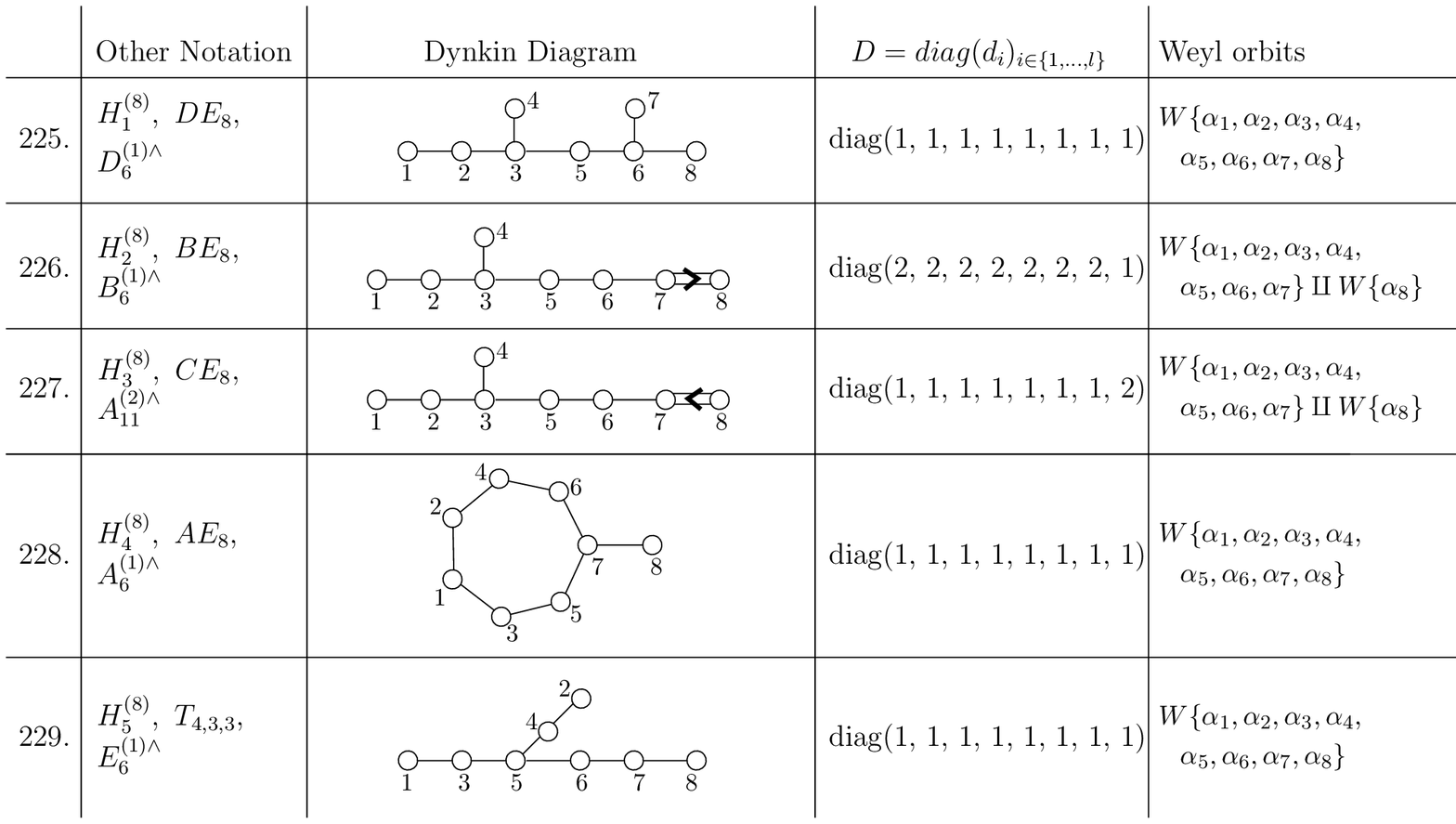}}
\end{table}

\newpage

\begin{table}[h!]
  \caption{Rank 9 diagrams}
  \centering
\resizebox{6.5in}{3.04 in }{\includegraphics*[viewport=35 400 720 720 ]{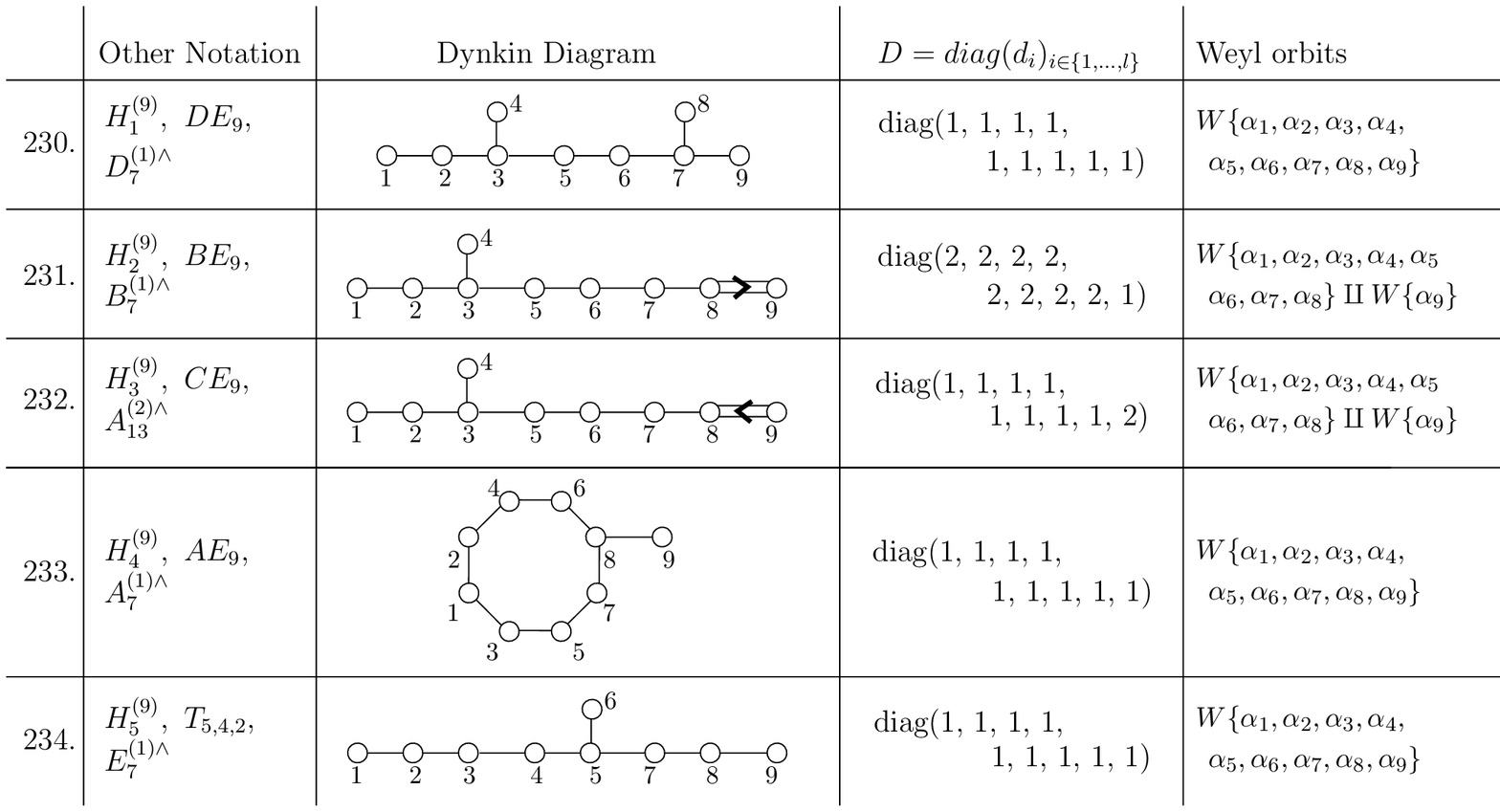}}
\end{table}

\begin{table}[h!]
  \caption{Rank 10 diagrams}
  \centering
\resizebox{6.5in}{2.5 in }{\includegraphics*[viewport=35 460 720 720 ]{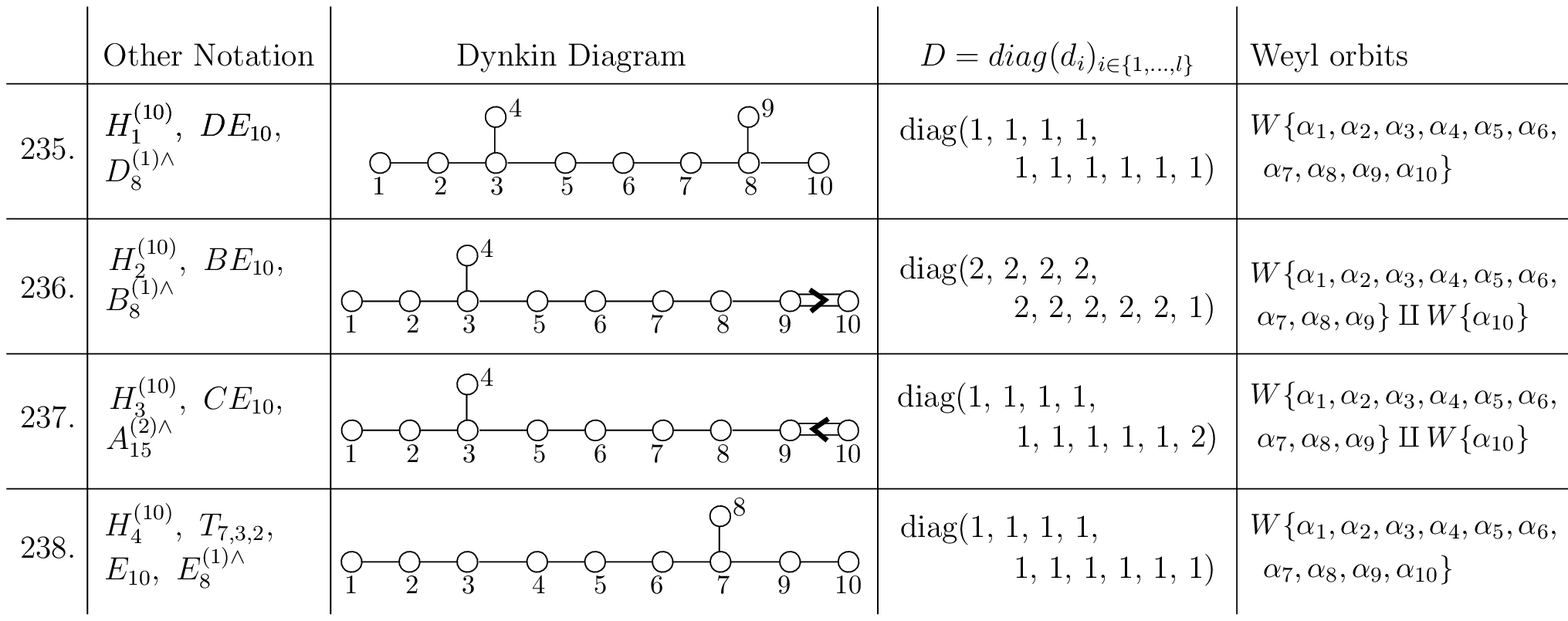}}
\end{table}

\newpage

\medskip\noindent
\section{Errata in the existing literature}
In this section we summarize the errata in the existing literature. The tables of [KM] appear not to have any errata.

\medskip\noindent {\bf Errata in the paper of Li ([Li]) }

\medskip\noindent (1)  $H_{21}^{(3)}$ should have a double arrow on the right.  Without this double arrow, the diagram is a copy of   $H_{11}^{(3)}$. In fact  $H_{21}^{(3)}$ should be the dual of $H_{22}^{(3)}$ which is missing.

\medskip\noindent (2) $H_{73}^{(3)}$ should have double arrow on right.  Without this double arrow, the diagram is a copy of $H_{54}^{(3)}$. 
	
\medskip\noindent {\bf Errata in the paper of Sa\c{c}lio\~{g}lu ([Sa])}

\medskip\noindent (i)  Table 2, Rank 3, No. 18, dual diagram should have a double arrow between vertices 2 and 3. This corresponds to diagram no. 116 of Section 7.

\medskip\noindent (ii)  Symmetric diagram no. 29 of  Section 7 is missing (noted in [dBS]).
  
\medskip\noindent (iii)   Symmetric diagram no. 136 of Section 7 is missing (noted in [dBS]).
 
\medskip\noindent (iv)   Symmetric diagram no. 146 of Section 7 is missing (noted in [dBS]).

\medskip\noindent (v)  Symmetric diagram no. 170 of Section 7 is missing (noted in [dBS]).
  
\medskip\noindent (vi)  Symmetric diagram no. 197 of Section 7 is missing (noted in [dBS]).
 
\medskip\noindent (vii)  Symmetric diagram no. 198 of Section 7 is missing (noted in [dBS]).

\medskip\noindent {\bf Errata in the book of Wan ([W] )}

\medskip
There are numerous misprints in Section $2.6$ of this book that presents an account of Li's classification. For example, there are many missing edges and edge orientations. We list a few of these below.

\medskip\noindent (i)  $H_{21}^{(3)}$ should have 4 edges on left to match [Li], and should have double arrow on right as per erratum (1) of [Li] above.

\medskip\noindent (ii)  	$H_{73}^{(3)}$ is incorrect as per erratum (2) of [Li]  above.  (This is not a misprint, but a copy of Li's error.) 

\medskip\noindent (iii)  	$H_1^{(3)}$ is missing an arrow on left.

\medskip\noindent (iv)  $H_{90}^{(3)}$ should have 4 edges on right.

\medskip\noindent (v)  $H_{27}^{(4)}$- $H_{31}^{(4)}$ are all missing an edge.

\medskip\noindent (vi)  $H_{32}^{(4)}$- $H_{34}^{(4)}$ are all missing an upward 2-arrow.

\medskip\noindent (vii)  $H_{35}^{(4)}$ is missing a downward 2-arrow.

\newpage

\end{document}